\theoremstyle{plain} 
\newtheorem{theorem}{\indent\sc Theorem}[section]
\newtheorem{lemma}[theorem]{\indent\sc Lemma}
\newtheorem{proposition}[theorem]{\indent\sc Proposition}
\newtheorem{conjecture}[theorem]{\indent\sc Conjecture}
\theoremstyle{definition} 
\newtheorem{remark}[theorem]{\indent\sc Remark}
\newtheorem{example}[theorem]{\indent\sc Example}
\newtheorem{problem}[theorem]{\indent\sc Problem}
\newtheorem{question}[theorem]{\indent\sc Question}
\begin{document}

\title[Linearization of a semi-positive line bundle]
{Linearization of transition functions of a semi-positive line bundle along a certain submanifold} 

\author[T. Koike]{Takayuki Koike} 

\subjclass[2010]{ 
Primary 32J25; Secondary 14C20.
}
%
\keywords{ 
Hermitian metrics, neighborhoods of subvarieties, Ueda theory. 
}
\address{
Department of Mathematics, Graduate School of Science, Osaka City University \endgraf
3-3-138, Sugimoto, Sumiyoshi-ku Osaka, 558-8585 \endgraf
Japan
}
\email{tkoike@sci.osaka-cu.ac.jp}

\maketitle

\begin{abstract}
Let $X$ be a complex manifold and $L$ be a holomorphic line bundle on $X$. 
Assume that $L$ is semi-positive, namely $L$ admits a smooth Hermitian metric with semi-positive Chern curvature. 
Let $Y$ be a compact K\"ahler submanifold of $X$ such that the restriction of $L$ to $Y$ is topologically trivial. 
We investigate the obstruction for $L$ to be unitary flat on a neighborhood of $Y$ in $X$. 
As an application, for example, we show 
the existence of nef, big, and non semi-positive line bundle on a non-singular projective surface. 
\end{abstract}

\section{Introduction}

Let $X$ be a complex manifold and $L$ be a holomorphic line bundle on $X$. 
Assume that $L$ is {\it semi-positive}, 
namely there exists a $C^\infty$ Hermitian metric $h$ such that $\sqrt{-1}\Theta_h$ is semi-positive at any point of $X$, where $\Theta_h$ is the Chern curvature tensor of $h$. 
Let $Y$ be a compact K\"ahler submanifold of $X$ such that the restriction $L|_Y$ of $L$ to $Y$ is topologically trivial. 
In this case, it is known that $L|_Y$ is unitary flat (see \S \ref{section:2_1}). 
Our interest is in the relation between the restriction of $L|_V$ of $L$ to a small tubular neighborhood $V$ of $Y$ and the unitary flat line bundle $\widetilde{L}$ on $V$ with $\widetilde{L}|_Y=L|_Y$ 
({\it flat extension}, the existence of such a line bundle $\widetilde{L}$ follows by considering the isomorphism 
$H^1(V, \mathrm{U}(1)) \to H^1(Y, \mathrm{U}(1))$, where we denote by $\mathrm{U}(1)$ the unitary group of degree $1$; i.e. $\mathrm{U}(1):=\{t\in\mathbb{C}\mid |t|=1\}$). 
As a tentative answer, let us pose the following: 

\begin{conjecture}\label{conj:main}
Let $X, L$, and $Y$ be as above. 
Then $L|_V$ is unitary flat for a sufficiently small neighborhood $V$ of $Y$ in $X$. 
\end{conjecture}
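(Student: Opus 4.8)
\emph{Strategy and reduction.} The plan is to adapt Ueda's obstruction theory for neighborhoods of a submanifold, using the semi-positive metric to kill the obstruction classes order by order, and to isolate the convergence of the resulting formal linearization as the essential difficulty. Fix a $C^{\infty}$ metric $h$ on $L$ with $\sqrt{-1}\,\Theta_h\ge 0$. Since $c_{1}(L|_{Y})=0$ and $Y$ is compact Kähler, integrating $\sqrt{-1}\,\Theta_h|_{Y}\wedge\omega_{Y}^{\dim Y-1}$ over $Y$ (for a Kähler form $\omega_{Y}$) gives $0$, while the integrand is pointwise $\ge 0$; hence $\sqrt{-1}\,\Theta_h|_{Y}=0$, i.e. $h|_{Y}$ is already flat (this is the fact recalled in \S\ref{section:2_1}). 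Let $\widetilde L$ be a flat extension of $L|_{Y}$ on a tubular neighborhood, with flat metric $g$, and put $F:=L\otimes\widetilde L^{-1}$. Then $h\otimes g^{-1}$ is a semi-positive metric on $F$ and $F|_{Y}\cong\mathcal O_{Y}$ holomorphically. Because a unitary flat line bundle on a neighborhood of $Y$ is trivial as soon as its restriction to $Y$ is (its holonomy being a character of $\pi_{1}(V)\cong\pi_{1}(Y)$), after replacing $L$ by $F$ the conjecture becomes: \emph{if $L|_{Y}\cong\mathcal O_{Y}$ and $L$ is semi-positive near $Y$, then $L|_{V'}\cong\mathcal O_{V'}$ for some neighborhood $V'$ of $Y$}; this is the statement I would prove.

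\emph{Obstruction skeleton.} Let $\mathcal I$ be the ideal sheaf of $Y$ and $Y_{n}$ the $n$-th infinitesimal neighborhood. Fix a cover $\{U_{j}\}$ of $Y$ by charts of $X$ adapted to $Y$ and transition functions $t_{jk}$ of $L$ with $t_{jk}|_{Y}\equiv 1$. Call $L$ \emph{linearized to order $n$} if $t_{jk}\equiv 1\ (\operatorname{mod}\mathcal I^{\,n+1})$; writing then $t_{jk}=1+a_{jk}+O(\mathcal I^{\,n+2})$ with $a_{jk}\in\Gamma\bigl(U_{jk},\mathcal I^{\,n+1}/\mathcal I^{\,n+2}\bigr)$ and $\mathcal I^{\,n+1}/\mathcal I^{\,n+2}\cong\operatorname{Sym}^{n+1}N^{\ast}_{Y/X}$, the cocycle identity forces $\{a_{jk}\}$ to be a $1$-cocycle, and its class $u_{n+1}(L,Y,X)\in H^{1}\!\bigl(Y,\operatorname{Sym}^{n+1}N^{\ast}_{Y/X}\bigr)$ vanishes exactly when $L$ can be linearized to order $n+1$ after a further change of frames $t_{jk}\mapsto(1+b_{j})\,t_{jk}\,(1+b_{k})^{-1}$ (equivalently, $u_{n+1}$ is the obstruction to extending a generator of $L|_{Y_{n}}$ to a generator of $L|_{Y_{n+1}}$).

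\emph{Killing the obstructions.} I would prove $u_{n}=0$ for all $n$ by induction, using only $\sqrt{-1}\,\Theta_h\ge 0$ and $\sqrt{-1}\,\Theta_h|_{Y}=0$. Assume $L$ linearized to order $n$, and normalize the local weights $\varphi_{j}=-\log|e_{j}|_{h}^{2}$ — which are psh, with $\varphi_{j}-\varphi_{k}=-\log|t_{jk}|^{2}$ vanishing to normal order $n+1$ — by the pluriharmonic frame changes $\varphi_{j}\mapsto\varphi_{j}+2\operatorname{Re}\log\lambda_{j}$ built from the holomorphic coboundaries produced at the previous steps, so that $\varphi_{j}$ vanishes to normal order $n$ along $Y$. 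The normal degree-$(n+1)$ Taylor data of $\varphi_{j}$ along $Y$ then provide a \emph{$C^{\infty}$} primitive of $\{a_{jk}\}$; to upgrade it to a holomorphic primitive one uses that the tangential block $\varphi_{j,z\bar z}$ of the complex Hessian is $\ge 0$ and vanishes on $Y$, hence vanishes there to higher order, and that a form homogeneous of odd degree in the normal variables which is pointwise $\ge 0$ must vanish identically. In the model case $n=0$ this is transparent: on $Y$ one has $\varphi_{j,z\bar z}|_{Y}=0$, so non-negativity of the $2\times 2$ Hessian $\bigl(\begin{smallmatrix}\varphi_{z\bar z}&\varphi_{z\bar w}\\ \varphi_{w\bar z}&\varphi_{w\bar w}\end{smallmatrix}\bigr)$ forces $\varphi_{j,z\bar w}|_{Y}=0$; equivalently the normal part of $\partial\varphi_{j}$ along $Y$, a $C^{\infty}$ section of $N^{\ast}_{Y/X}$ that is a primitive of $-\{a_{jk}\}$, is $\bar\partial$-closed, hence holomorphic, hence $u_{1}=0$. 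Carrying the same mechanism through the induction — the compact Kähler hypothesis on $Y$ entering through the Hodge/$\partial\bar\partial$-type arguments needed to pass from these pointwise inequalities to the vanishing of a Dolbeault class — yields $u_{n}=0$ for every $n$. Hence $L$ is \emph{formally trivial} along $Y$: it is linearizable to every finite order.

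\emph{The main obstacle.} It remains to make the formal linearization converge on an honest neighborhood. Constructing the order-$(n+1)$ correction amounts to solving a Cousin/$\bar\partial$ problem on $Y$ with values in $\operatorname{Sym}^{n+1}N^{\ast}_{Y/X}$, and convergence of the infinite product of frame changes in the normal direction requires a uniform bound on the solution operators as $n\to\infty$; this is precisely where, in Ueda's hypersurface theory, a Diophantine (``generic unitary flat'') condition on $N_{Y/X}$ is imposed. Removing any such arithmetic hypothesis — that is, exploiting that we have an \emph{honest} semi-positive metric $h$ and not merely its formal jet along $Y$ — is the crux of Conjecture~\ref{conj:main}. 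Two plausible routes are: (i) show that once $L$ is formally linearized $\sqrt{-1}\,\Theta_h$ vanishes to infinite order along $Y$, then solve $\sqrt{-1}\,\partial\bar\partial\rho=\sqrt{-1}\,\Theta_h$ near $Y$ with $\rho$ flat along $Y$, so that $h\,e^{\rho}$ is flat on a neighborhood; (ii) run a compactness/normal-families argument on the metrics attached to the finite-order linearizations. I expect this convergence step — not the vanishing of the obstruction classes, which is fairly robust — to be the genuinely hard part.
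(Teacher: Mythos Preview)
The statement you are attempting is Conjecture~\ref{conj:main}, which the paper explicitly leaves open; there is no proof in the paper to compare against. The paper's contribution is precisely the obstruction-theoretic framework you sketch, together with partial results: $u_1(Y,X,L)=0$ whenever $L$ is semi-positive (Theorem~\ref{thm:1}), and $u_2=0$ under additional hypotheses (Theorems~\ref{thm:2} and~\ref{thm:flat_u2}). Your reduction to the case $L|_Y\cong\mathcal O_Y$ and your $n=0$ step are correct and match the paper's argument for Theorem~\ref{thm:1}.

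Your proposal, however, contains a genuine gap \emph{before} the convergence issue you flag. You assert that ``carrying the same mechanism through the induction \dots\ yields $u_n=0$ for every $n$,'' but this is not established in the paper and is in fact posed there as an open problem (see \S\ref{section:5}). The passage from $n$ to $n+1$ is substantially harder than the $n=0$ case: already for $u_2$ the paper must assume $Y$ is a hypersurface and either that $N_{Y/X}^{-1}$ is not pseudo-effective (Theorem~\ref{thm:2}) or that $L=[Y]$ with $N_{Y/X}$ topologically trivial (Theorem~\ref{thm:flat_u2}); in the latter setting the induction to higher $n$ can stall at even orders when the constant $A_j^{(m,m)}$ of Lemma~\ref{lem:higher_inductive} is nonzero (see the remark following Proposition~\ref{prop:higher_main}). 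In addition, the very well-definedness of $u_n$ for $n\ge 2$ requires hypotheses such as those in Proposition~\ref{prop:well-def_main}, which your outline does not address. So there are at least two open problems embedded in your sketch, not one: the vanishing of all obstruction classes, and the convergence of the resulting formal linearization. Your instinct that the convergence step is ``the genuinely hard part'' may well be right, but the inductive vanishing is not ``fairly robust'' --- it is itself unresolved.
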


See also \cite[Conjecture 2.1]{K2019} and \cite[Theorem 1.1]{K3}. 
Towards solving this conjecture, 
we investigate the difference between $L$ and $\widetilde{L}$ in each finite order jet along $Y$ in the present paper. 
As applications, we show the following two results. 

\begin{theorem}\label{thm:main_1}
There exists a nef and big line bundle on a non-singular projective surface which is not semi-positive. 
\end{theorem}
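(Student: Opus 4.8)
The plan is to exhibit a projective surface carrying a nef and big line bundle whose failure to be semi-positive is detected by a non-vanishing first-order jet along a curve — a jet that the finite-order comparison of $L$ with its flat extension, the main technical content of this paper, must annihilate whenever $L$ is semi-positive. First I would fix a smooth projective curve $\Sigma$ of genus $g\ge 2$ and a point $p\in\Sigma$. Since $H^1(\Sigma,\mathcal{O}_\Sigma(p))\ne 0$ (by Riemann--Roch its dimension is $g-1$), there is a non-split extension
\[
0\longrightarrow \mathcal{O}_\Sigma(p)\longrightarrow E\longrightarrow \mathcal{O}_\Sigma\longrightarrow 0,
\]
whose nonzero class I call $e\in H^1(\Sigma,\mathcal{O}_\Sigma(p))$. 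I would then set $X:=\mathbb{P}(E)$ with $\pi\colon X\to\Sigma$, put $L:=\mathcal{O}_{\mathbb{P}(E)}(1)$, and let $C\subset X$ be the section corresponding to the quotient $E\to\mathcal{O}_\Sigma$. Then $C\cong\Sigma$ is a curve of genus $\ge 2$, its normal bundle is $N_{C/X}\cong\mathcal{H}om(\mathcal{O}_\Sigma(p),\mathcal{O}_\Sigma)\cong\mathcal{O}_\Sigma(-p)$ (hence $C^2=-1$), and $L|_C\cong\mathcal{O}_C$.

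Next I would check that $L$ is nef and big, which is routine. Bigness follows from $L^2=\deg E=\deg\mathcal{O}_\Sigma(p)=1>0$. Nefness follows because $E$, being an extension of the nef line bundles $\mathcal{O}_\Sigma$ and $\mathcal{O}_\Sigma(p)$, is a nef vector bundle, so that $L=\mathcal{O}_{\mathbb{P}(E)}(1)$ is nef; alternatively one checks directly that $\mathcal{O}_\Sigma(p)$ is the maximal-degree sub-line-bundle of $E$ (any sub-line-bundle of positive degree maps to $0$ in $\mathcal{O}_\Sigma$, hence into $\mathcal{O}_\Sigma(p)$), so every quotient line bundle of $E$, and hence every section of $\pi$, has nonnegative $L$-degree, and $L\cdot D\ge 0$ for all irreducible $D$, with $L\cdot D=0$ only for $D=C$.

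It then remains to show $L$ is not semi-positive, and here I would argue by contradiction, applying the theory of this paper with $Y=C$. Since $L\cdot C=\deg(L|_C)=0$, the bundle $L|_C$ is topologically trivial, in fact trivial; hence its flat extension $\widetilde L$ on a tubular neighbourhood of $C$ is the trivial flat line bundle, whose first-order jet along $C$ is trivial. On the other hand, under $0\to H^1(C,N^\vee_{C/X})\to\mathrm{Pic}(C^{(1)})\to\mathrm{Pic}(C)\to 0$ with $N^\vee_{C/X}\cong\mathcal{O}_\Sigma(p)$, the class of $L|_{C^{(1)}}$ lying above $L|_C=\mathcal{O}_C$ is the extension class $e$, hence nonzero; this one reads off from the relative Euler sequence of $\mathbb{P}(E)$ together with $\pi_*L\cong E$, which identifies the $1$-jet of $\mathcal{O}_{\mathbb{P}(E)}(1)$ along $C$ with the extension defining $E$. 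Now if $L$ carried a $C^\infty$ metric $h$ with $\sqrt{-1}\,\Theta_h\ge 0$, then restricting to $C$ and using $\int_C\frac{\sqrt{-1}}{2\pi}\Theta_{h|_C}=L\cdot C=0$ would force $\sqrt{-1}\,\Theta_{h|_C}\equiv 0$, so $L|_C$ is unitary flat, and the finite-order comparison between $L$ and $\widetilde L$ proved in this paper would then force the $1$-jet of $L$ along $C$ to agree with that of $\widetilde L$ — whatever the precise order controlled by semi-positivity, it includes the first order, which already gives $e=0$, a contradiction. Hence $L$ is nef, big, and not semi-positive. I expect the delicate point to be the identification of the first-order jet of $L$ along $C$ with $e$, and, more fundamentally, making sure that the comparison theorem really does pin down the $1$-jet of a semi-positive $L$ in this situation, where $N_{C/X}$ has negative degree rather than being unitary flat; the restriction to base genus $\ge 2$ is forced, since over an elliptic curve no non-split extension of this shape admits a section of negative self-intersection.
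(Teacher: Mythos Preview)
Your argument is correct and follows exactly the paper's strategy: exhibit a ruled surface over a curve of genus at least~$2$ carrying a nef and big line bundle with $u_1(Y,X,L)\neq 0$, then invoke Theorem~\ref{thm:1}. The paper's concrete example (Example~\ref{ex:nefbig}) is a variant of Grauert's: $X$ is the compactified affine $K_C$-bundle over a genus-$2$ curve $C$ determined by a nonzero class $\xi\in H^1(C,K_C)$, $Y$ is the infinity section with $Y^2=-2$, and $L=[Y]\otimes\pi^*K_C$. Your example is the closely related $X=\mathbb{P}(E)$ for a non-split extension $0\to\mathcal{O}_\Sigma(p)\to E\to\mathcal{O}_\Sigma\to 0$, with $C$ the section of self-intersection $-1$ and $L=\mathcal{O}_{\mathbb{P}(E)}(1)$. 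In both cases the computation identifies $u_1$ with the defining extension class; your example is arguably the more transparent of the two, and it shows that $C^2=-1$ already suffices.

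Both of your flagged concerns dissolve on inspection. Theorem~\ref{thm:1} imposes no hypothesis on $N_{Y/X}$ (only that $Y$ be compact K\"ahler and $L|_Y$ topologically trivial), so negative normal degree is no obstacle. As for the identification $u_1(C,X,L)=e$: restricting the relative Euler sequence $0\to\Omega_{X/\Sigma}\to\pi^*E\otimes\mathcal{O}(-1)\to\mathcal{O}_X\to 0$ to $C$ returns $0\to N_{C/X}^{-1}\to E\to\mathcal{O}_C\to 0$, and since $\pi_*(L\otimes I_C^2)=0$ (the fibrewise bundle is $\mathcal{O}_{\mathbb{P}^1}(-1)$) one gets $E\cong\pi_*(L|_{2C})$, so the extension class of $E$ is exactly the class of $L|_{2C}$ in $H^1(C,N_{C/X}^{-1})$, i.e.\ $u_1$. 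A direct \v{C}ech computation in affine fibre coordinates gives the same answer.
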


\begin{theorem}\label{thm:main_2}
Let $X$ be a connected weakly $1$-complete K\"ahler manifold of dimension $2$ 
and $Y\subset X$ be a holomorphically embedded compact non-singular curve. 
Assume that the normal bundle $N_{Y/X}$ of $Y$ is topologically trivial, the canonical bundle $K_{X\setminus Y}$ of $X\setminus Y$ is holomorphically trivial, 
and that there exists a $C^\infty$ Hermitian metric $h$ on $[Y]$ with $\sqrt{-1}\Theta_h$ is semi-positive, where $[Y]$ is the holomorphic line bundle on $X$ which corresponds to the divisor $Y$.  
Then either the conditions $(i)$, $(ii)$ or $(iii)$ holds: \\
$(i)$ The surface $X\setminus Y$ is holomorphically convex,  \\
$(ii)$ There exists a non-singular holomorphic foliation $\mathcal{F}$ on a neighborhood of $Y$ such that $Y$ is a leaf of $\mathcal{F}$ and that $i_L^*\Theta_{h}\equiv 0$ holds on any leaf $L$ of $\mathcal{F}$, where $i_L\colon L\to X$ is the inclusion, or\\
$(iii)$ It hold that $\Theta_h\wedge \Theta_h\equiv 0$ and that the function $\rho\colon X\to \mathbb{R}_{>0}$ which maps a point $p\in X$ to the trace of $\Theta_h|_p$ with respect to some Hermitian metric on $X$ is flat at any point of $Y$: i.e. $\rho(p) = o(({\rm dist}\,(p, Y))^n)$ for any positive integer $n$ as $p$ approaches to $Y$, where ``{\rm dist}" is a local Euclidean distance. 
\end{theorem}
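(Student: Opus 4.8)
The plan is to set up the standard Ueda-theoretic machinery for the pair $(X, Y)$ with the line bundle $[Y]$, and then use the semi-positivity hypothesis on $h$ together with the triviality of $K_{X\setminus Y}$ to force the trichotomy. First I would observe that, since $N_{Y/X}$ is topologically trivial and $Y$ is a compact Kähler curve, $N_{Y/X}$ is unitary flat, so one may speak of the Ueda class / obstruction classes $u_n(Y,X)\in H^1(Y, N_{Y/X}^{-n})$ measuring the order to which the transition functions of $[Y]$ along $Y$ can be linearized (this is exactly the jet-by-jet comparison between $[Y]$ and its flat extension announced in the introduction, applied earlier in the paper). There are then two cases: either all the obstructions vanish and $[Y]$ is unitary flat on a neighborhood $V$ of $Y$ (the "Ueda type $\infty$" case, i.e. the conclusion of Conjecture \ref{conj:main} holds here), or there is a finite \emph{type} $n=\mathrm{type}(Y,X)<\infty$, the smallest order at which linearization fails.

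In the type $=\infty$ case, I would argue as follows. One obtains a flat structure on $[Y]|_V$, hence a local defining function $w$ of $Y$ whose level sets $\{w=c\}$ foliate $V$; since $K_{X\setminus Y}$ is trivial, a nowhere-vanishing holomorphic $2$-form $\sigma$ exists on $X\setminus Y$, and comparing $\sigma$ with $dw\wedge(\cdot)$ along the leaves controls the growth of $\sigma$ toward $Y$. Either $\sigma$ extends (possibly with poles/zeros along $Y$) in a way that makes $X\setminus Y$ have a holomorphic exhaustion built from $|w|$ — giving holomorphic convexity, case $(i)$ — or the leaves $\{w=c\}$ give precisely the foliation $\mathcal{F}$ of case $(ii)$: flatness of $[Y]$ forces $i_L^*\Theta_h\equiv 0$ on each leaf $L$ because the flat metric on $[Y]|_V$ differs from $h$ by a smooth positive factor that is pluriharmonic along the foliation, so its curvature restricted to a leaf vanishes. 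This is where weak $1$-completeness of $X$ is used: the plurisubharmonic exhaustion of $X$ restricts to an exhaustion of $X\setminus Y$ compatible with $|w|$, and one runs the usual Nishino/Ueda argument to decide between $(i)$ and $(ii)$.

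In the finite-type case $n=\mathrm{type}(Y,X)$, I expect to land in case $(iii)$. The nonvanishing obstruction $u_n$ is represented, via the semi-positive metric $h$, by the leading jet of $\log|s_Y|^2_h$ along $Y$ (where $s_Y$ is the canonical section of $[Y]$); semi-positivity of $\sqrt{-1}\Theta_h$ then forces the relevant quadratic expression in this jet to be a nonnegative $(1,1)$-form, which combined with $K_{X\setminus Y}$ trivial (so $\det$ of the curvature of any metric on $T_X$ relates to $\Theta_h$) gives $\Theta_h\wedge\Theta_h\equiv 0$ — i.e. the curvature has rank $\le 1$ everywhere. The trace function $\rho$ then satisfies a differential inequality of Ueda type coming from the Hörmander–Bochner estimate for $\Theta_h$ semi-positive, and finiteness of the type $n$ forces $\rho$ to vanish to all orders along $Y$; this is the flatness statement $\rho(p)=o(\mathrm{dist}(p,Y)^k)$ for all $k$.

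The main obstacle, I expect, is the finite-type case: extracting $\Theta_h\wedge\Theta_h\equiv 0$ and the all-order flatness of $\rho$ requires carefully relating the abstract Ueda obstruction class to the honest curvature $\Theta_h$ of the given semi-positive metric — the earlier jet-comparison results in the paper presumably handle the algebra, but matching them with the positivity of $\sqrt{-1}\Theta_h$, and ruling out intermediate behavior (rank-one but non-flat $\rho$) by a maximum-principle / $L^2$ argument on the weakly $1$-complete manifold, is the delicate part. Handling the cases where $\sigma$ acquires zeros or poles along $Y$, so that $X\setminus Y$ is only holomorphically convex rather than Stein, is a secondary technical point in the type $=\infty$ case.
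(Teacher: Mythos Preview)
Your main dichotomy is wrong: the finite-type branch is empty. Semi-positivity of $[Y]$ already forces $(Y,X)$ to be of infinite type --- this is \cite[Theorem~1.1]{K3}, invoked at the very start of the paper's proof --- so your entire ``finite type $\Rightarrow$ (iii)'' argument is vacuous, and case $(iii)$ cannot arise from a nonvanishing $u_n$. You also assume, in the infinite-type case, that $[Y]$ becomes unitary flat on a neighborhood of $Y$; but that is precisely the open Conjecture~\ref{conj:main} (it can fail for pairs of type $(\gamma)$), so the global defining function $w$ and the level-set foliation you build from it need not exist.

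The paper's actual decomposition is: first torsion versus non-torsion $N_{Y/X}$. In the torsion case, Ueda's fibration theorem \cite[Theorem~3]{U} gives a proper map $V\to\Delta$ whose fibers form $\mathcal{F}$, and $(ii)$ follows by the maximum principle on each fiber. In the non-torsion case one studies the set $S$ of integers $m$ with $\partial_{w_j}^m\partial_{\overline{w}_j}^m\varphi_j|_{U_j}\not\equiv 0$. If $S=\emptyset$, the local weights are pluriharmonic to all orders along $Y$ (Lemma~\ref{lem:for_thm_main_2}); then either $\Theta_h\wedge\Theta_h\equiv 0$ (case $(iii)$) or $\Theta_h$ has rank $2$ somewhere, and Brunella's \cite[Proposition~2]{B} --- this is where $K_{X\setminus Y}$ trivial and weak $1$-completeness are actually used, to manufacture a plurisubharmonic exhaustion strictly plurisubharmonic at a point --- gives $(i)$. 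If $S\neq\emptyset$ with $m=\min S$, a change of frame yields $\varphi_j=A|w_j|^{2m}+O(|w_j|^{2m+1})$ with $A>0$; then $\Theta_h$ has rank $\geq 1$ on $V\setminus Y$, and either rank $2$ somewhere (Brunella again, $(i)$) or rank exactly $1$ everywhere, in which case $\mathcal{F}$ is the Monge--Amp\`ere foliation of $\varphi_j$ on $V\setminus Y$ (\cite{S}, \cite{BK}), extended holomorphically across $Y$ via an estimate on its tangent direction --- case $(ii)$. None of the three outcomes is tied to Ueda type in the way you propose.
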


Note that, in the proof of Theorem \ref{thm:main_1}, we prove that the line bundle $L$ is not semi-positive for a variant of Grauert's example $(X, L)$ of a nef and big line bundle $L$ on a non-singular projective surface $X$, see \cite[Problem 2.2]{FT} and Example \ref{ex:nefbig}. 
Note also that the existence of such line bundles for higher dimensional projective manifolds has been known (\cite[Example 2.14]{D} and \cite[Example 5.4]{BEGZ}, see also Remark \ref{rmk:begz}). 
The motivation of Theorem \ref{thm:main_2} comes from the study of a neighborhood of an elliptic curve embedded in a K\"ahler surface $S$ such that it is a suitable irreducible component of the divisor corresponding to the canonical bundle $K_S$ of $S$ when $K_S$ or the anti-canonical bundle $K_S^{-1}$ is semi-positive (see Question \ref{q:surface_main}). 
In the proof of Theorem \ref{thm:main_2}, \cite[Proposition 2]{B} plays an important role. 

Our idea to compare the line bundle $L$ with the flat extension $\widetilde{L}$ of $L|_Y$ comes from Ueda theory on the classification of the analytic structures of a neighborhood of a submanifold (\cite{U}, see also \cite{N} or \S \ref{section:2_2} here). 
Let $Y$ be a compact non-singular curve holomorphically embedded in a non-singular surface $X$ such that the normal bundle $N_{Y/X}$ is topologically trivial. 
We denote by $[Y]$ the line bundle on $X$ which corresponds to the divisor $Y$ (or the invertible sheaf $\mathcal{O}_X(Y)$). 
Let $\widetilde{N}$ be the flat extension of $N_{Y/X}$, namely $\widetilde{N}$ is the unitary flat line bundle on a tubular neighborhood of $Y$ such that $\widetilde{N}|_Y=N_{Y/X}$. 
As $[Y]|_Y$ also coincides with $N_{Y/X}$, one can consider the difference between $[Y]$ and $\widetilde{N}$ in the first order jet along $Y$, by which the first Ueda's obstruction class 
$u_1(Y, X)\in H^1(Y, N_{Y/X}^{-1})$ is defined. 
When $u_1(Y, X)=0$, or equivalently when $[Y]$ and $\widetilde{N}$ coincide in the first order jet, 
one can define the second Ueda's obstruction class 
$u_2(Y, X)\in H^1(Y, N_{Y/X}^{-2})$ by comparing $[Y]$ and $\widetilde{N}$ in the second order jet along $Y$. 
In the case where all (similarly and inductively defined) Ueda's obstruction classes $u_n(Y, X)$'s vanish, Ueda gave a sufficient condition for the coincidence of $[Y]$ and $\widetilde{N}$ on a neighborhood of $Y$ in $X$ \cite[Theorem 3]{U}. 
As the coincidence of $[Y]$ and $\widetilde{N}$ can be interpreted as the vertical linearizability of a neighborhood of $Y$ (or, more precisely, the linearizability of the transition functions of the system of local defining functions of $Y$), this Ueda's theorem can be regarded as a generalization of Arnold's linearization theorem \cite{A} of a neighborhood of an elliptic curve. 
Note that the definition of Ueda's obstruction classes and this type of vertical linearization theorems can naturally be generalized into the cases of general dimensions if the normal bundle is unitary flat, see \cite{K2018} or \S \ref{section:2_2} here. 

Again, let $Y$ be a compact non-singular curve holomorphically embedded in a non-singular surface $X$ such that the normal bundle $N_{Y/X}$ is topologically trivial. 
When $u_n(Y, X)$ is a non-zero element of $H^1(Y, N_{Y/X}^{-n})$ for some positive integer $n$, the pair $(Y, X)$ is said to be {\it of finite type}. 
Ueda investigated the details of the complex analytical properties of a neighborhood of $Y$ also in this case \cite[Theorem 1, 2]{U}. 
By applying one of these results of Ueda, the author showed the non semi-positivity of the line bundle $[Y]$ when the pair $(Y, X)$ is of finite type \cite[Theorem 1.1]{K3}, which is one of the biggest motivation of the present paper since it can be regarded as a partial answer to Conjecture \ref{conj:main}. 

Let $X$ be a complex manifold, $Y\subset X$ be a compact K\"ahler submanifold, and $L$ be a line bundle on $X$ such that the restriction $L|_Y$ is topologically trivial. 
In \S \ref{section:3}, according to the spirit of Ueda's classification, we pose an obstruction class 
$u_1(Y, X, L) \in H^1(Y, N_{Y/X}^*)$ by comparing $L$ and the flat extension $\widetilde{L}$ of $L|_Y$ in the first order jet along $Y$ (so that it vanishes if these two line bundles coincide in the first order jet), where $N_{Y/X}^*$ is the dual vector bundle of the normal bundle. 
Note that, when $Y$ is a hypersurface with topologically trivial normal bundle, the definition of two obstruction classes $u_1(Y, X)$ and $u_1(Y, X, L)$ coincide if $L=[Y]$. 
By using this first obstruction class, we show the following: 

\begin{theorem}\label{thm:1}
Let $X$ be a complex manifold, $Y\subset X$ be a compact K\"ahler submanifold, and $L$ be a line bundle on $X$ such that the restriction $L|_Y$ is topologically trivial. 
Assume that $u_1(Y, X, L)\not=0$. 
Then $L$ is not semi-positive. 
\end{theorem}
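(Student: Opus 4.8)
The plan is to argue by contradiction: suppose $L$ admits a $C^\infty$ Hermitian metric $h$ with $\sqrt{-1}\Theta_h\geq 0$ everywhere, and show that this forces $u_1(Y,X,L)=0$. The first step is to fix a sufficiently small tubular neighborhood $V$ of $Y$ together with a flat extension $\widetilde{L}$ of $L|_Y$ and a flat metric $h_{\mathrm{flat}}$ on $\widetilde{L}$. Then $L|_V\otimes\widetilde{L}^{-1}$ is a holomorphic line bundle which is holomorphically trivial on $Y$ (indeed $\widetilde{L}|_Y=L|_Y$ even holomorphically, since both are realized by the same system of constant unitary transition functions adapted to $Y$), so after shrinking $V$ it is holomorphically trivial on $V$; write $\Phi$ for a nowhere-zero holomorphic section of $L|_V\otimes\widetilde{L}^{-1}$. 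The metric $h\otimes h_{\mathrm{flat}}^{-1}$ on this trivial bundle is then $e^{-\varphi}$ for a global $C^\infty$ function $\varphi$ on $V$, and the semi-positivity of $\sqrt{-1}\Theta_h$ together with the flatness of $h_{\mathrm{flat}}$ gives $\sqrt{-1}\partial\bar\partial\varphi\geq 0$ on $V$, i.e. $\varphi$ is plurisubharmonic on $V$.

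The second step is to recall, from the construction in \S\ref{section:3}, how $u_1(Y,X,L)$ is read off from the local data. In a Ueda-type system of coordinate charts $(V_j; t_j, z_j)$ adapted to $Y$ (with $t_j$ a fiber coordinate in the normal direction, transforming by the unitary flat cocycle defining $\widetilde{L}$ up to first order), a trivialization of $L$ on each $V_j$ differs from the flat trivialization by a holomorphic function $f_j = c_j\cdot(1 + a_j t_j + O(t_j^2))$; the collection $\{a_j|_Y\}$, suitably normalized, represents $u_1(Y,X,L)\in H^1(Y,N_{Y/X}^*)$, and it vanishes in cohomology exactly when the first-order parts can be absorbed by a change of trivialization, i.e. when $L$ and $\widetilde{L}$ agree to first order along $Y$. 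Equivalently, $u_1(Y,X,L)$ is the image under a connecting homomorphism of a $\bar\partial$-closed $(0,1)$-form valued in $N_{Y/X}^*$ obtained from the $t$-linear term of $\partial\varphi$ restricted to $Y$.

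The third and central step is the plurisubharmonicity argument à la Ueda. Restricting the psh function $\varphi$ to each normal slice $\{z_j=\text{const}\}\cap V_j$, which is (an open subset of) a disk, and using that a psh function on a disk is subharmonic, one controls the sub-mean-value behavior of $\varphi$ in the $t_j$ direction. Comparing the expansions $\varphi = (\text{const in }t) + 2\,\mathrm{Re}(\beta_j(z_j)\,t_j) + O(|t_j|^2)$ on overlapping charts, the transformation rule for $t_j$ forces the $\beta_j$'s to glue into a global section of a bundle — but subharmonicity (the sub-mean-value inequality applied on small circles $|t_j|=\varepsilon$) kills the genuinely harmonic linear term unless it is globally coherent, and then compactness of $Y$ together with the K\"ahler assumption (which guarantees $H^0(Y,N_{Y/X}^*)\to$ pairs correctly with $H^1$, or more precisely that there is no nonzero global holomorphic object obstructing the vanishing) forces $\beta_j\equiv 0$ on $Y$. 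This is exactly the statement that the first-order jet of $\varphi$ along $Y$ vanishes, which by the identification in step two means $u_1(Y,X,L)=0$, contradicting the hypothesis.

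The main obstacle is the last implication: turning the analytic vanishing of the first-order jet of the psh potential $\varphi$ into the cohomological vanishing of $u_1(Y,X,L)$, and in particular justifying why subharmonicity in the normal disks plus compactness of the K\"ahler submanifold $Y$ leaves no room for a nonzero harmonic first-order term. I would expect this to follow the template of Ueda's original argument (\cite{U}, see also the treatment in \S\ref{section:2_2}) and of the author's earlier work \cite{K3}, where the key point is that a global real-analytic section of a flat bundle that is "subharmonic in the fiber direction" and pluriharmonic to leading order must be constant, hence zero when it represents a nonzero degree-one class; the K\"ahler hypothesis on $Y$ enters to ensure Hodge-theoretic control (the relevant $\bar\partial$-harmonic representative argument) so that the $(0,1)$-class cannot be simultaneously nonzero and $\partial$-exact-on-each-slice. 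Once that lemma is in place, the contradiction closes and the theorem follows.
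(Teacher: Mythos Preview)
Your proposal has two genuine gaps.

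First, in your Step~1 you assert that $L|_V\otimes\widetilde{L}^{-1}$, being holomorphically trivial on $Y$, becomes holomorphically trivial on some neighborhood $V$. This is false in general and is essentially what the whole obstruction theory is about: the class $u_1(Y,X,L)$ is precisely the first obstruction to identifying $L$ with $\widetilde{L}$ on an infinitesimal neighborhood of $Y$, so when $u_1\neq 0$ the bundle $L|_V\otimes\widetilde{L}^{-1}$ is \emph{not} holomorphically trivial on any $V$. Consequently there is no global holomorphic section $\Phi$ and no global plurisubharmonic potential $\varphi$; one is forced to work with the local weights $\varphi_j=-\log|e_j|_h^2$, which satisfy only $\varphi_k-\varphi_j=-2\,\mathrm{Re}\!\sum_{|\alpha|=1}f_{kj,\alpha}w_j^\alpha+O(|w_j|^2)$ on overlaps (equation~(\ref{eq:diff_phi})).

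Second, your Step~3 invokes subharmonicity on normal disks and the sub-mean-value inequality on circles $|t_j|=\varepsilon$ to ``kill the genuinely harmonic linear term''. But a term $2\,\mathrm{Re}(\beta_j(z_j)\,w_j)$ has vanishing circle-average, so the sub-mean-value inequality in the normal direction alone imposes no constraint on $\beta_j$. The paper's argument is different and uses the \emph{full} complex Hessian of $\varphi_j$, mixing tangential and normal directions. One first observes that the zeroth-order terms $\varphi_j^{(0)}$ glue to a global plurisubharmonic function on the compact manifold $Y$, hence are constant; normalizing to $0$, the tangential block $(\varphi_j)_{z_j^\nu\overline{z_j^\mu}}$ of the Hessian vanishes along $Y$. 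Positive semi-definiteness of the Hessian then forces the mixed entries $(\varphi_j)_{w_j^\lambda\overline{z_j^\nu}}$ to vanish along $Y$ as well (a $2\times 2$ minor argument, Lemma~\ref{lem:1}), which says exactly that each coefficient $\varphi_j^{(\lambda)}$ is \emph{holomorphic} on $U_j$. Equation~(\ref{eq:2}) then exhibits the \v{C}ech cocycle $\{f_{kj,\alpha}\}$ defining $u_1(Y,X,L)$ as the coboundary of the holomorphic cochain $\{\sum_\lambda\varphi_j^{(\lambda)}dw_j^\lambda\}$, giving $u_1(Y,X,L)=0$. The holomorphicity of the first-order coefficients---obtained from the tangential degeneracy of the Hessian, not from normal-slice subharmonicity---is the step your outline is missing.
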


\begin{theorem}\label{thm:main_linearization}
Let $X$ be a non-singular surface, 
$L$ a holomorphic line bundle on $X$, 
and $Y$ be a non-singular compact curve holomorphically embedded into $X$ such that ${\rm deg}\,N_{Y/X}\leq \min\{-1,\ 2-2g\}$, where $g$ is the genus of $Y$. 
Assume that ${\rm deg}\,L|_Y=0$ and that $L\otimes [Y]^{-m}$ is semi-positive for some positive integer $m$. 
Then $L$ is semi-positive if and only if $u_1(Y, X, L)=0$. 
\end{theorem}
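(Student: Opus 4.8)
\emph{The plan is to prove the two implications separately.} The forward direction is immediate: if $L$ is semi-positive, then $u_1(Y,X,L)=0$ is exactly the contrapositive of Theorem \ref{thm:1} (a compact Riemann surface is K\"ahler, and $\deg L|_Y=0$ forces $L|_Y$ to be topologically trivial, so Theorem \ref{thm:1} applies). So the substance is the converse: assuming $u_1(Y,X,L)=0$, one must produce a $C^\infty$ Hermitian metric on $L$ with semi-positive Chern curvature. My strategy has two stages: (a) show that $L$ is isomorphic to the flat extension $\widetilde L$ of $L|_Y$ on some neighborhood $V$ of $Y$, so that $L|_V$ carries a flat unitary metric; (b) glue this flat metric near $Y$ with a semi-positively curved, but singular along $Y$, metric on $L$ coming from the hypothesis that $L\otimes[Y]^{-m}$ is semi-positive.

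For stage (a) I would argue in the spirit of Ueda's classification, comparing $L$ and $\widetilde L$ on the successive infinitesimal neighborhoods $Y_n$ of $Y$. Since $\widetilde L|_Y=L|_Y$ by construction, an isomorphism $L|_{Y_{n-1}}\simeq\widetilde L|_{Y_{n-1}}$ extends to $Y_n$ up to an obstruction class in $H^1(Y,N_{Y/X}^{-n})$ (the twist by $L|_Y$ cancelling because $\widetilde L|_Y=L|_Y$), and the case $n=1$ of this obstruction is precisely $u_1(Y,X,L)\in H^1(Y,N_{Y/X}^*)$, which vanishes by hypothesis. By Serre duality $H^1(Y,N_{Y/X}^{-n})\cong H^0(Y,K_Y\otimes N_{Y/X}^{n})^*$, and $\deg(K_Y\otimes N_{Y/X}^{n})=(2g-2)+n\deg N_{Y/X}$, which the inequality $\deg N_{Y/X}\leq\min\{-1,\,2-2g\}$ makes strictly negative for every $n\geq 2$; hence $H^1(Y,N_{Y/X}^{-n})=0$ for all $n\geq 2$. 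Consequently every obstruction vanishes and $L$ becomes isomorphic to $\widetilde L$ along the formal completion of $X$ along $Y$. Finally, $\deg N_{Y/X}\leq-1<0$ means $N_{Y/X}$ is a negative line bundle on the curve $Y$, so $Y$ is exceptional in $X$; by the formal principle for neighborhoods of exceptional compact curves (Grauert), the formal isomorphism $L\simeq\widetilde L$ is then realized on an honest neighborhood $V\supset Y$. Transporting the flat unitary metric of $\widetilde L$, we obtain a $C^\infty$ metric $h_V$ on $L|_V$ with $\sqrt{-1}\Theta_{h_V}\equiv 0$.

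For stage (b), fix a $C^\infty$ metric $h_0$ on $L\otimes[Y]^{-m}$ with $\sqrt{-1}\Theta_{h_0}\geq 0$ and let $s_Y\in H^0(X,[Y])$ be the canonical section vanishing on $Y$. Over $X\setminus Y$ the section $s_Y^m$ trivializes $[Y]^m$, so $h_0$ induces a $C^\infty$ metric $h_1$ on $L|_{X\setminus Y}$ with $\sqrt{-1}\Theta_{h_1}=\sqrt{-1}\Theta_{h_0}|_{X\setminus Y}\geq 0$; in a local frame of $L$ near a point of $Y$ the weight of $h_1$ equals a bounded function plus $m\log|f|^2$, with $f$ a local defining function of $Y$, hence it tends to $-\infty$ as the point approaches $Y$, whereas the weight of $h_V$ stays bounded. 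Therefore, for a large constant $C$, the weight of $h_1$ plus $C$ is strictly below the weight of $h_V$ on a neighborhood of $Y$ and strictly above it outside a slightly larger neighborhood, and I would then define the desired metric on $L$ to equal $h_V$ near $Y$, to equal $e^{-C}h_1$ away from $Y$, and on the intermediate shell to be given by the regularized maximum (in the sense of Demailly) of the two local weights. This is $C^\infty$ on all of $X$, and its curvature, being the regularized maximum of the semi-positive forms $\sqrt{-1}\Theta_{h_V}$ and $\sqrt{-1}\Theta_{h_1}$, is semi-positive; so $L$ is semi-positive, as desired.

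The hard part will be stage (a), and within it the passage from a formal isomorphism $L\simeq\widetilde L$ along $Y$ to an isomorphism on a genuine neighborhood: this is exactly where the negativity $\deg N_{Y/X}<0$ is essential, via the formal principle for exceptional curves, while the finer inequality $\deg N_{Y/X}\leq 2-2g$ is calibrated precisely so that the higher obstruction groups $H^1(Y,N_{Y/X}^{-n})$ for $n\geq 2$ vanish, and hence $u_1(Y,X,L)=0$ alone propagates through all orders. Stage (b) is routine but requires some care in matching the rate at which the weight of $h_1$ blows down along $Y$ against the boundedness of $h_V$, so that the regularized maximum is performed on a shell compactly contained in $V\setminus Y$.
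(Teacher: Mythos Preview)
Your proposal is correct, and stage~(b) coincides with the paper's gluing argument (there called the ``regularized minimum construction''). Stage~(a), however, follows a genuinely different path from the paper.

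The paper proves stage~(a) by a one-shot vanishing argument on an actual neighborhood $V$ of $Y$, rather than by climbing the tower of infinitesimal neighborhoods. Once $u_1(Y,X,L)=0$ has produced a system of type~$2$, the paper sets $a_{jk}:=\log(t_{jk}^{-1}e_k/e_j)$, which vanishes to order~$2$ along $Y$ and hence defines a class in $H^1(V,\mathcal{O}_V(-2Y))=H^1(V,[Y]^{-2})$. The numerical hypotheses are read as: $\deg N_{Y/X}<0$ gives arbitrarily small $1$-convex neighborhoods with maximal compact set $Y$, while $\deg N_{Y/X}\le 2-2g$ makes $K_Y^{-1}\otimes N_{Y/X}^{-1}=([Y]^{-2}\otimes K_V^{-1})|_Y$ semi-positive. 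These feed directly into Ohsawa's vanishing theorem to give $H^1(V,[Y]^{-2})=0$; solving $a_{jk}=b_k-b_j$ with $b_j=O(w_j^2)$ and setting $\hat e_j:=e_j\exp(-b_j)$ yields unitary flat frames on $V$ in one stroke, with no formal-to-analytic passage.

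Your route instead uses the same numerical hypothesis to kill $H^1(Y,N_{Y/X}^{-n})$ for every $n\ge 2$ via Serre duality on the curve, lifts $L\simeq\widetilde L$ through each infinitesimal neighborhood, and then invokes a formal principle to turn the resulting formal isomorphism into an honest one. This works, but what you call ``the formal principle for neighborhoods of exceptional compact curves (Grauert)'' is really the comparison theorem (formal functions) for the Remmert reduction of a $1$-convex neighborhood, combined with the exponential sequence, to conclude that a line bundle trivial on every $Y_n$ is trivial on $V$; the classical formal principle is about germs of complex spaces rather than line bundles, so the citation should be sharpened. The trade-off: your vanishing input (Riemann--Roch on a curve) is more elementary than Ohsawa's $L^2$ theorem, but you pay for it with a nontrivial formal-to-convergent step that the paper's argument sidesteps entirely by working on $V$ from the outset.
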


Note that Theorem \ref{thm:main_1} follows from Theorem \ref{thm:1} and a concrete calculation of the obstruction class $u_1(Y, X, L)$ for a variant of Grauert's example (Example \ref{ex:nefbig}, see \S \ref{section:4_1}). 
Note also that Theorem \ref{thm:main_linearization} can be regarded as a generalization of the semi-positivity result for the case of ${\rm deg}\,N_{Y/X}\leq \min\{0,\ 4-4g\}$ mentioned just after \cite[Theorem 1.2]{K2}, since $u_1(Y, X, L)=0$ automatically follows from Kodaira vanishing theorem when ${\rm deg}\,N_{Y/X}< 2-2g$. 

Though it seems to be natural to consider the second obstruction class when $u_1(Y, X, L)$ vanishes, 
there is a difficulty in general on the well-definedness. 
More precisely, though one can define a class of the first cohomology group $H^1(Y, S^2N_{Y/X}^*)$ of the second symmetric tensor product bundle $S^2N_{Y/X}^*$ of $N_{Y/X}^*$ by comparing the difference between $L$ and $\widetilde{L}$ in second order jet, 
this class may depend on the choice of a system of local defining functions of $Y$ and frames of $L$. 
The same type of well-definedness problem also occurs in more higher order jets. 
In \S \ref{section:3_2}, we give a sufficient condition for the obstruction class 
$u_n(Y, X, L) \in H^1(Y, S^nN_{Y/X}^*)$ we will inductively define to be well-defined (Proposition \ref{prop:well-def_main}). 
Especially we observe the well-definedness of the obstruction classes and 
give a necessary condition for $L$ to be semi-positive by using higher obstruction classes 
when $Y$ is a hypersurface and $N_{Y/X}^*$ is either topologically trivial or not pseudo-effective in \S \ref{section:3_4} and \ref{section:3_5}. 
For example, we have the following for the case where $N_{Y/X}$ is topologically trivial. 

\begin{theorem}\label{thm:flat_u2}
Let $X$ be a complex manifold and $Y$ be a non-singular compact hypersurface of $X$ which is K\"ahler. 
Assume that the normal bundle $N_{Y/X}$ is topologically trivial, 
and that $[Y]$ is semi-positive. 
Then it holds that $u_1(Y, X)=u_2(Y, X)=0$. 
\end{theorem}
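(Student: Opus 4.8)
The plan is to deduce the statement from the first-order vanishing result Theorem \ref{thm:1}, together with the second-order obstruction theory developed in \S\ref{section:3_2}--\S\ref{section:3_5}, after translating the Ueda classes $u_n(Y,X)$ into the classes $u_n(Y,X,[Y])$ attached to the line bundle $L=[Y]$. Since $Y$ is a hypersurface with topologically trivial normal bundle, the remark in the introduction gives $u_1(Y,X)=u_1(Y,X,[Y])$ in $H^1(Y,N_{Y/X}^{-1})=H^1(Y,N_{Y/X}^*)$, and, once $u_1$ vanishes, the same comparison of $[Y]$ with the flat extension $\widetilde{N}$ in second order identifies $u_2(Y,X)$ with $u_2(Y,X,[Y])$ in $H^1(Y,N_{Y/X}^{-2})=H^1(Y,S^2N_{Y/X}^*)$. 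Because $[Y]$ is semi-positive, Theorem \ref{thm:1} in contrapositive form forces $u_1(Y,X,[Y])=0$, hence $u_1(Y,X)=0$; in particular the second obstruction class is at least a candidate for being defined.

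Next I would settle well-definedness. With $u_1=0$ and $N_{Y/X}$ topologically trivial, the bundle $S^2N_{Y/X}^*$ is again topologically trivial, so the sufficient condition of Proposition \ref{prop:well-def_main} is met and $u_2(Y,X,[Y])$ is a well-defined element of $H^1(Y,N_{Y/X}^{-2})$, independent of the choice of a system of local defining functions of $Y$ and of local frames of $[Y]$. This is precisely the topologically trivial hypersurface situation treated in \S\ref{section:3_4}, and it is the hypothesis $u_1=0$ that removes the first-order ambiguity which otherwise obstructs the definition.

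The heart of the matter is then a second-order version of the argument behind Theorem \ref{thm:1}. Fix a $C^\infty$ Hermitian metric $h$ on $[Y]$ with $\sqrt{-1}\Theta_h\geq 0$ and let $s$ be the canonical section cutting out $Y$, so that $\Phi:=-\log|s|_h^2$ is plurisubharmonic on $V\setminus Y$ for a tubular neighborhood $V$ of $Y$ and blows up along $Y$. Since $N_{Y/X}$ is topologically trivial and, by the Kähler assumption on $Y$, the semi-positive topologically trivial bundle $[Y]|_Y$ is unitary flat, one may compare $h$ with a flat metric on $\widetilde{N}$; the vanishing $u_1=0$ means the discrepancy vanishes to first order along $Y$, and its leading (second-order) jet along $Y$ is, in the cocycle description of \S\ref{section:3}, a $\bar\partial$-closed representative of $u_2(Y,X,[Y])$ valued in $N_{Y/X}^{-2}$. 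The condition $\sqrt{-1}\Theta_h\geq 0$ imposes a sign on this second-order term; averaging over the normal directions and restricting to $Y$ yields a sub-mean-value (equivalently $\partial\bar\partial$-) inequality for the associated $(0,1)$-form on the compact Kähler manifold $Y$, and since $N_{Y/X}^{-2}$ is unitarily flat its harmonic sections are parallel, which forces the harmonic representative, hence $u_2(Y,X,[Y])=u_2(Y,X)$, to be zero.

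I expect the main obstacle to be exactly this last step. One must control the cross terms in the Taylor expansion of $h$ along $Y$ so that the second-order part is genuinely a cocycle representing $u_2$ rather than being contaminated by the first-order data --- this is where $u_1=0$ and Proposition \ref{prop:well-def_main} are indispensable --- and then one must upgrade the pointwise consequence of semi-positivity to a genuine flatness (and hence vanishing) statement for the $N_{Y/X}^{-2}$-valued representative via a robust maximum-principle or Bochner-type argument on $Y$, in the spirit of \S\ref{section:3_4}. The remaining bookkeeping (identification of exponents of line bundles for a hypersurface, compatibility of the two families of obstruction classes) is routine.
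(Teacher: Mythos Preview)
Your overall plan matches the paper: identify $u_n(Y,X)$ with $u_n(Y,X,[Y])$ via the remark at the end of \S\ref{section:3_2}, kill $u_1$ by Theorem~\ref{thm:1}, and then run a second-order analysis. The paper's proof is exactly this: Theorem~\ref{thm:1}, the fact that $2\in Z(Y,X,h;f_Y)$ (from \S\ref{section:3_3}), and Proposition~\ref{prop:higher_main}~$(i)$.

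Two points need correction, though. First, your appeal to Proposition~\ref{prop:well-def_main} is phrased wrongly: the topological triviality of $S^2N_{Y/X}^*$ is not one of the hypotheses $(i)$--$(iii)$. What applies here is condition~$(ii)$, namely that $N_{Y/X}$ is unitary flat \emph{and} that the system $\{(V_j,w_j)\}$ of local defining functions is of type $2$. The existence of such a system is not automatic from $u_1=0$ alone; it is produced in \S\ref{section:3_3} by showing that the first-order Taylor coefficients $\varphi_j^{(\lambda)}$ of the local weights are holomorphic (Lemma~\ref{lem:1}) and absorbing them into the frame, which is exactly what makes $2\in Z(Y,X,h;f_Y)$.

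Second, and more importantly, the mechanism you sketch for $u_2=0$ is not the one that works, and as stated it does not prove anything. There is no $(0,1)$-form in play, no Bochner/harmonic-representative argument, and ``harmonic sections are parallel'' does not force a class in $H^1$ to vanish. What the paper actually does (Lemma~\ref{lem:higher_inductive}, Lemma~\ref{lem:hol_higher}, Proposition~\ref{prop:higher_main}) is: expand the local weight as $\varphi_j=\varphi_j^{(2,0)}w_j^2+\varphi_j^{(1,1)}|w_j|^2+\overline{\varphi_j^{(2,0)}}\,\overline{w_j}^{\,2}+O(|w_j|^3)$; use positivity of the $2\times 2$ minor $\det\begin{pmatrix} (H_j)^0_0 & (H_j)^0_b\\ (H_j)^a_0 & (H_j)^a_b\end{pmatrix}$ of the complex Hessian together with Lemma~\ref{lem_positive_function} to conclude that $\varphi_j^{(2,0)}$ is \emph{holomorphic} on $U_j$; and then read off from equation~\eqref{eq:diff_phi} that $t_{jk}^{-2}\varphi_k^{(2,0)}-\varphi_j^{(2,0)}=-f_{kj,2}$, so the \v Cech cocycle $\{f_{kj,2}\}$ is a coboundary. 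Equivalently, one replaces $e_j$ by $e_j(1+\varphi_j^{(2,0)}w_j^2)$ and $w_j$ by $w_j(1+\varphi_j^{(2,0)}w_j^2)^{-1}$ to obtain a system of type~$3$. The point is purely local-analytic (holomorphicity of a Taylor coefficient forced by a determinant inequality), not Hodge-theoretic; replace your third paragraph by this argument and the proof goes through.
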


The organization of the paper is as follows. 
In \S 2, we will explain some previous or known results and fundamental facts on Hermitian metrics on line bundles and Ueda theory. 
In \S 3, we define the obstruction classes $u_n(Y, X, L)$'s and investigate some properties of them especially when $L$ is semi-positive. 
In \S 4, we apply some results in \S 3 to show Theorems \ref{thm:main_1}, \ref{thm:main_2}, and \ref{thm:main_linearization}. 
In \S 5, we make some discussion and pose some problems on our obstruction classes, neighborhoods of a submanifold, and the semi-positivity of the line bundles. 
\vskip3mm
{\bf Acknowledgment. } 
The author would like to give heartfelt thanks to Professors
Shigeharu Takayama and Valentino Tosatti for discussions on \cite[Problem 2.2]{FT}, 
which gave him one of the biggest motivations of this study. 
He is also grateful to Professor Takeo Ohsawa for his comments and suggestions of inestimable value, 
since the main part of this study was done during their stay in Wuppertal at the beginning of 2020 and the author believe that it was a great inspiration for this work. 
Thanks are also due to Doctor Nicholas M${}^{\rm c}$Cleerey for his helpful comments and warm encouragements, 
and to Professor Dano Kim for his kindly giving him information on his work on the existence of nef, big and not semi-positive line bundles for higher dimensional projective manifolds. 
This work was supported by JSPS Grant-in-Aid for Research Activity Start-up 
18H05834, 
by MEXT Grant-in-Aid for Leading Initiative for Excellent Young Researchers（LEADER) No. J171000201,  
and partly by Osaka City University Advanced Mathematical Institute (MEXT Joint Usage/Research Center on Mathematics and Theoretical Physics).

\section{Preliminaries}\label{section:2}

\subsection{Curvature semi-positivity of Hermitian metrics on line bundles}\label{section:2_1}

Let $X$ be a complex manifold 
and $L$ be a holomorphic line bundle on $X$. 
For a positive integer $m$, we denote by $L^m$ the $m$-th tensor power $L^{\otimes m}$ and by $L^{-m}$ the $m$-th tensor power $(L^*)^{\otimes m}$ of the dual bundle $L^*$ of $L$. 

Let $\{V_j\}$ be an open covering of $X$. 
When $\{V_j\}$ is sufficiently fine, one can take a system of local frames $\{(V_j, e_j)\}$ of $L$; 
i.e. each $e_j$ is a nowhere vanishing holomorphic section of $L$ on $V_j$. 
The ratio $s_{jk}:=e_j/e_k$, which is a nowhere vanishing holomorphic function on $V_{jk}:=V_j\cap V_k$, can be regarded as the transition function of $L$. 
Let $h$ be a $C^\infty$ Hermitian metric on $L$. 
The function $\varphi_j:=-\log |e_j|_h^2$ on each $V_j$ is called as the {\it local weight function} of $h$. 
It follows from a simple calculation that $\{(V_j, \partial\overline{\partial}\varphi_j)\}$ glue to each other to define a global $(1, 1)$-form $\Theta_h$, 
which coincides with the Chern curvature tensor of $h$. 
Therefore it follows that $h$ is semi-positively curved if and only if any local weight function $\varphi_j$ is plurisubharmonic. 

We say that $L$ is {\it unitary flat} if one can choose a system of local frames $\{(V_j, e_j)\}$ of $L$ such that all the transition functions $s_{jk}$'s are elements of $\mathrm{U}(1)$ (by taking a refinement of $\{V_j\}$ if necessary). 
It is known that $L$ is unitary flat if $X$ is a compact K\"ahler manifold and $L$ is topologically trivial (Kashiwara's theorem, see \cite[\S 1]{U}). 
Assume that $X$ is compact, $L$ is unitary flat, and that a system $\{(V_j, e_j)\}$ satisfies that all the transition functions $s_{jk}$'s are elements of $\mathrm{U}(1)$. 
Then, for any Hermitian metric $h$ on $L$, it clearly holds that $\{(V_j, -\log |e_j|_h^2)\}$ glue to each other to define a global function on $X$. When $h$ is semi-positively curved, this function must be a constant function by the maximal principle. 

Let $D$ be a compact hypersurface of $X$. 
Then one can define a corresponding holomorphic line bundle $[D]$ on $X$ so that the sheaf $\mathcal{O}_X([D])$ of holomorphic sections of $[D]$ is isomorphic to the invertible sheaf $\mathcal{O}_X(D)$ of rational functions on $X$ which may have a pole only along $D$ with degree at most one. 
For an open (Stein) covering $\{V_j\}$ of $X$, one can take a system of local frames $\{(V_j, e_j)\}$ of $[D]$ such that $e_j$ corresponds, via the isomorphism $\mathcal{O}_X([D])\cong \mathcal{O}_X(D)$, to the constant function $1$ if $V_j\cap D=\emptyset$ and to the meromorphic function $1/w_j$ if $V_j\cap D\not=\emptyset$ for some local holomorphic defining function $w_j$ of $Y$ in $V_j$. 
In this case, $\{(V_j, w_j\cdot e_j)\}$ patches to each other to define a global section $f_D\colon X\to [D]$, which is called as the {\it canonical section} of $[D]$. 
Assume that $[D]$ admits a $C^\infty$ Hermitian metric $h$. 
Let $\phi_D$ be a locally $L^1$ function on $X$ defined by 
$\phi_D := -\log |f_D|_h^2$. 
On each $V_j$, one clearly have that $\phi_D = \varphi_j$ if $V_j\cap D=\emptyset$ and 
$\phi_D = -\log |w_j|^2 + \varphi_j$ if $V_j\cap D\not=\emptyset$. 
Thus it follows that $\phi_D$ is a plurisubharmonic function of the compliment $X\setminus D$ in this case. 
By considering this function $\phi_D$, problems on the existence of a metric with semi-positive curvature on $[D]$ can be reworded to problems on the complex analytical convexity of the complement $X\setminus D$ (see the arguments in the proof of \cite[Theorem 1.1]{K3} or \cite[\S 2.1]{K2019}). 
Note that, even when $D$ is smooth and $N_{D/X}$ is unitary flat, it may possible that $[D]$ is not semi-positive. 
Indeed, \cite[Example 1.7]{DPS} gives such an example. 
Note also that $[D]$ is semi-positive if there exists a neighborhood $V$ of $D$ in $X$ such that $[D]|_V$ is unitary flat. 
This can be shown by using ``regularized minimum construction", see \cite{K2}, \cite[\S 5]{K2018}, or \cite[\S 2.1]{K2019} for the detail. 


\subsection{Ueda Theory}\label{section:2_2}

Let $X$ be a complex manifold and $Y\subset X$ be a holomorphically embedded compact complex submanifold with unitary flat normal bundle. 

In \cite{U}, Ueda investigated the complex analytic structure on a neighborhood of $Y$ when $X$ is a surface and $Y$ be a curve by defining obstruction classes as we shortly explained in \S 1 (see also \cite{N}). 
In \cite{K2018}, we investigated a higher codimensional analogue of Ueda theory. 
In this subsection, we will summarize our notions on this generalized version of Ueda theory for reader's convenience, see also \cite[\S 2.2]{K2019}. 

Let $X$ be a complex manifold and $Y\subset X$ be a compact complex submanifold of codimension $r\geq 1$ such that $N_{Y/X}$ is unitary flat. 
Take a finite open covering $\{U_j\}$ of $Y$ and a neighborhood $V_j$ of $U_j$ in $X$ and 
a defining function $w_j\colon V_j\to \mathbb{C}^r$ of $U_j$ for each $j$: 
i.e. $w_j$ is a holomorphic function on $V_j$ such that ${\rm div}(w_j^{\lambda})$'s transversally intersect along $U_j$, where $w_j^{\lambda}\colon V_j\to\mathbb{C}$ is the composition of $w_j$ and $\lambda$-th projection map $\mathbb{C}^r\to\mathbb{C}$. 
By a simple argument, one may assume that $dw_j=S_{jk}dw_k$ holds on each $U_{jk}:=U_j\cap U_k$ for some unitary matrix $S_{jk}\in\mathrm{U}(r)$ by changing $w_j$'s if necessary, where 
\[
dw_j:=\left(
    \begin{array}{c}
      dw_j^{1} \\
      dw_j^{2} \\
      \vdots \\
      dw_j^{r}
    \end{array}
  \right).  
\]
We call such a system $\{(V_j, w_j)\}$ of local defining functions of $Y$ as a system of {\it type $1$}. 
By shrinking $V_j$'s if necessary again, we assume that, for each $j$, there exists a holomorphic surjection ${\rm Pr}_{U_j}\colon V_j\to U_j$ such that $(w_j, z_j\circ {\rm Pr}_{U_j})$ are coordinates of $V_j$, where $z_j$ is a coordinate of $U_j$. 
In what follows, for any holomorphic function $f$ on $U_j$, we denote by the same letter $f$ the pull-buck ${\rm Pr}_{U_j}^*f:=f\circ {\rm Pr}_{U_j}$. 
On $U_j$ and $U_k$ such that $U_{jk}\not=\emptyset$, 
one have the series expansion 
\[
S_{jk}\cdot \left(
    \begin{array}{c}
      w_k^{1} \\
      w_k^{2} \\
      \vdots \\
      w_k^{r}
    \end{array}
  \right)
=
\left(
    \begin{array}{c}
      w_j^{1} \\
      w_j^{2} \\
      \vdots \\
      w_j^{r}
    \end{array}
  \right)
+\sum_{|a|\geq 2} \left(
    \begin{array}{c}
      f_{kj, a}^{(1)}(z_j) \\
      f_{kj, a}^{(2)}(z_j) \\
      \vdots \\
      f_{kj, a}^{(r)}(z_j)
    \end{array}  \right)
\cdot w_j^a, 
\]
where $a=(a_1, a_2, \dots, a_r)$ is the multiple index running all the elements of $(\mathbb{Z}_{\geq 0})^r$ with $|a|:=\sum_{\lambda=1}^ra_\lambda$ is larger than or equal to $2$, 
$f_{kj, a}^{(\lambda)}$'s are holomorphic functions on $U_{jk}$ (we regard this also as a function defined by $({\rm Pr}_{U_j}|_{{\rm Pr}_{U_j}^{-1}(U_{jk})})^*f_{kj, a}^{(\lambda)}$), 
and $w_j^a:=\prod_{\lambda=1}^r(w_j^{\lambda})^{a_\lambda}$. 
For a positive integer $m$, we say that the system $\{(V_j, w_j)\}$ of local defining functions is of {\it type $m$} if $f_{kj, a}\equiv 0$ holds for any $a$ with $|a|\leq m$ and any $j, k$ with $U_{jk}\not=\emptyset$. 
If $\{(V_j, w_j)\}$ is of type $m$, it follows that 
\[
\left\{\left(U_{jk}, \ \sum_{\lambda=1}^r\sum_{|a|=m+1}f_{kj, a}^{(\lambda)} \, \frac{\partial}{\partial w_j^{(\lambda)}}\otimes dw_j^a \right)\right\}
\]
satisfies the $1$-cocycle condition, 
and thus it defines an element of $H^1(Y, N_{Y/X}\otimes S^{m+1}N_{Y/X}^*)$. 
We denote this cohomology class by $u_m(Y, X)$, which is the definition of the $m$-th Ueda class. 
Ueda classes $u_m(Y, X)$ is well-defined up to the $\mathrm{U}(r)$-action of $H^1(Y, N_{Y/X}\otimes S^{m+1}N_{Y/X}^*)$, namely $[u_m(Y, X)]\in H^1(Y, N_{Y/X}\otimes S^{m+1}N_{Y/X}^*)/\mathrm{U}(r)$ does not depend on the choice of the system of type $m$. 

From a simple observation, one have that there exists a system of type $m+1$ if and only if $u_m(Y, X)=0$, in which case one can also define $u_{m+1}(Y, X)$. 
The pair $(Y, X)$ is said to be of {\it finite type} if, for some positive integer $n$, there exists a system of type $n$ such that $u_n(Y, X)\not=0$. 
Otherwise, the pair is said to be of {\it infinite type}. 

\subsection{A fundamental lemma}\label{section:2_3}

Though the following lemma is fundamental, it plays an important role in the linearizing procedure we will see in next section. 

\begin{lemma}\label{lem_positive_function}
Let $\Omega$ be a neighborhood of the origin in the complex plane $\mathbb{C}$ with the standard coordinate $w$, 
and $\Phi\colon \Omega\to \mathbb{R}$ be a function. 
Assume that $\Phi(w)\geq 0$ for any $w\in \Omega$ and that, for a positive integer $n$, $\Phi$ satisfies 
\[
\Phi(w) = \sum_{p=0}^n c_p\cdot w^p\overline{w}^{n-p} + O(|w|^{n+1})
\]
as $|w|\to 0$, where $c_p$'s are complex constants. 
Then the following holds: \\
$(i)$ When $n$ is odd, $c_p= 0$ for any $p\in \{0, 1, 2, \dots, n\}$. \\
$(ii)$ When $n$ is even, the constant $c_{n/2}$ is a non-negative real number. Any of the other constants are zero if $c_{n/2}=0$. 
\end{lemma}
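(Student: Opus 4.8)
The plan is to extract the asymptotic information from the non-negativity of $\Phi$ by restricting to rays $w = r e^{i\theta}$ and letting $r \to 0^+$. Along such a ray, the leading term of $\Phi$ is $r^n \sum_{p=0}^n c_p e^{i(2p-n)\theta}$, and the error is $O(r^{n+1})$. Dividing the inequality $\Phi(re^{i\theta}) \geq 0$ by $r^n$ and letting $r \to 0^+$ gives
\[
g(\theta) := \sum_{p=0}^n c_p\, e^{i(2p-n)\theta} \geq 0 \quad \text{for all } \theta \in \mathbb{R}.
\]
So the whole statement reduces to a purely elementary fact about trigonometric polynomials: if a (complex-coefficient) trigonometric polynomial $g(\theta) = \sum_{p=0}^n c_p e^{i(2p-n)\theta}$ is real-valued and non-negative for all $\theta$, then its coefficients satisfy the conclusions of the lemma.

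First I would observe that $g$ being real-valued forces a conjugation symmetry on the coefficients: comparing $g(\theta) = \overline{g(\theta)}$ and matching the coefficient of each frequency $e^{ik\theta}$, one gets $c_p = \overline{c_{n-p}}$ for every $p$. In particular, when $n$ is even, $c_{n/2} = \overline{c_{n/2}}$ is real. Next I would use the non-negativity together with the fact that $g$ is a finite Fourier series: its zeroth Fourier coefficient is $\frac{1}{2\pi}\int_0^{2\pi} g(\theta)\,d\theta$. The frequencies appearing in $g$ are $2p - n$ for $p = 0, \dots, n$, i.e. the integers $-n, -n+2, \dots, n-2, n$, which all have the same parity as $n$. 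When $n$ is odd, none of these frequencies is $0$, so $\int_0^{2\pi} g(\theta)\,d\theta = 0$; combined with $g \geq 0$ and continuity this forces $g \equiv 0$, hence all $c_p = 0$, which is case $(i)$. When $n$ is even, the only zero frequency comes from $p = n/2$, so $\int_0^{2\pi} g(\theta)\,d\theta = 2\pi c_{n/2} \geq 0$, giving $c_{n/2} \geq 0$.

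For the remaining assertion in case $(ii)$ — that $c_{n/2} = 0$ forces all other $c_p = 0$ — I would argue as follows. Write $g(\theta) = e^{-in\theta} P(e^{i\theta})$ where $P(z) = \sum_{p=0}^n c_p z^{2p}$ is a polynomial in $z$ (of degree $2n$, with only even powers). If $c_{n/2} = 0$, then the constant Fourier coefficient of $g$ is $0$; since $g \geq 0$ and $g$ is continuous, $\int g = 0$ forces $g \equiv 0$ on all of $[0, 2\pi]$, hence $P \equiv 0$ on the unit circle, hence $P$ is the zero polynomial, so every $c_p = 0$. This is really the same maximum-principle / mean-value observation as before, now applied once we know the average vanishes.

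The main obstacle I anticipate is the very first step: justifying that the inequality $\Phi \geq 0$ genuinely passes to the limit to yield $g(\theta) \geq 0$, uniformly in $\theta$. The point is that the implied constant in the $O(|w|^{n+1})$ term must be controlled independently of the direction $\theta$, which is automatic from the standard meaning of big-$O$ (it refers to a bound $|\Phi(w) - \sum c_p w^p \bar w^{n-p}| \leq C|w|^{n+1}$ for $|w|$ small, with $C$ absolute), but it is worth stating carefully: fix $\theta$, divide by $r^n$, and take $r \to 0^+$ to get $g(\theta) \geq 0$ for that particular $\theta$, then note $\theta$ was arbitrary. Once this reduction is in hand, everything else is elementary Fourier analysis of a single-variable trigonometric polynomial, and no further analytic subtlety arises.
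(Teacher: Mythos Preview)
Your proposal is correct and follows essentially the same route as the paper: reduce to the non-negativity of the homogeneous part $\Psi(w)=\sum_p c_p w^p\overline{w}^{n-p}$ by a scaling/limit argument, then exploit that the angular integral $\int_0^{2\pi} e^{i(2p-n)\theta}\,d\theta$ vanishes unless $2p=n$, so a non-negative function with zero mean must vanish identically. The only cosmetic difference is that the paper integrates $\Psi$ over the unit disc while you integrate $g(\theta)=\Psi(e^{i\theta})$ over the circle; since $\Psi$ is homogeneous of degree $n$ these computations differ only by the harmless radial factor $\int_0^1 r^{n+1}\,dr$.
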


\begin{proof}
We let $\Psi(w):=\sum_{p=0}^n c_p\cdot w^p\overline{w}^{n-p}$. 
As $\Phi$ is real valued, one have that $\Phi=\overline{\Phi}$, 
which implies that $\Psi$ is also a real-valued function. 
By considering a function $f_{w}\colon \mathbb{R}_{>0}\to \mathbb{R}_{\geq 0}$ defined by 
$f_{w}(r) := \Phi(rw) = r^n\cdot \Psi(w) +O(r^{n+1})$ for each $w\in\mathbb{C}$, 
one have that $\Psi(w)$ is non-negative at any point in $\mathbb{C}$. 

Denote by $\Delta$ the unit disc $\{w\in \mathbb{C}\mid |w|<1\}$ of $\mathbb{C}$. 
Then one have that 
\begin{align*}
\int_{\Delta}\Psi(w)\,d\lambda
&=\sum_{p=0}^n c_p\int_{\Delta} w^p\overline{w}^{n-p}\,d\lambda \\
&=\sum_{p=0}^n c_p\cdot \left(\int_0^1\ell^n\cdot \ell\,d\ell\right)\cdot \int_0^{2\pi} e^{\sqrt{-1}(2p-n)\theta}\,d\theta \\
&= \begin{cases}
0 & \text{if}\ n\ \text{is}\ \text{odd}\\
\frac{2\pi}{n+2}\cdot c_{n/2} & \text{if}\ n\ \text{is}\ \text{even}, 
\end{cases}
\end{align*}
where we denote by $d\lambda$ the Lebesgue measure, 
from which the assertions follow. 
\end{proof}



\section{Linearization of a metric with semi-positive curvature}\label{section:3}

\subsection{Set-up and notation}\label{section:3_1}
Let $X$ be a complex manifold with ${\rm dim}\,X=d+r$, 
$Y\subset X$ a compact K\"ahler submanifold of ${\rm dim}\,Y=d$ ($d, r\geq 1$), 
and $L\to X$ be a holomorphic line bundle such that the restriction $L|_Y$ is topologically trivial. 
Note that, in what follows, the assumption that $Y$ is compact K\"ahler is only needed to assure that 
$L|_Y$ is unitary flat, and that any unitary flat line bundle on $Y$ is trivial as a unitary flat line bundle if it is analytically trivial (see \S \ref{section:2_1}). 

Take a sufficiently fine finite open covering $\{U_j\}$ of $Y$ 
and a sufficiently small Stein open subset $V_j$ of $X$ such that $V_j\cap Y=U_j$. 
Denote by $V$ the neighborhood $\bigcup_jV_j$ of $Y$. 
Let $w_j \colon V_j\to \mathbb{C}^r$ ($w_j=(w_j^1, w_j^2, \dots, w_j^r)$) be local defining functions of $Y$, 
and 
$z_j=(z_j^1, z_j^2, \dots, z_j^d)$ be coordinates of $U_j$. 
By shrinking $V_j$'s if necessary, we assume that there exists a holomorphic surjection ${\rm Pr}_{U_j}\colon V_j\to U_j$ such that $(z_j^1\circ {\rm Pr}_{U_j}, z_j^2\circ {\rm Pr}_{U_j}, \dots, z_j^d\circ {\rm Pr}_{U_j}, w_j)$ are coordinates of $V_j$. 
In what follows, for any holomorphic function $f$ on $U_j$, we denote by the same letter $f$ the pull-buck ${\rm Pr}_{U_j}^*f:=f\circ {\rm Pr}_{U_j}$ for each $j$. 
We also simply denote by $(z_j, w_j)$ our local coordinates on $V_j$. 
Take a local frame $e_j$ of $L$ on each $V_j$. 
As $L|_Y$ admits a structure as unitary flat line bundle, one can take $e_j$'s such that $t_{jk}^{-1}\cdot e_k|_{U_{jk}}=e_j|_{U_{jk}}$ holds for some $t_{jk}\in\mathrm{U}(1)$ on each $U_{jk}:=U_j\cap U_k$. 
Then the expansion of the ratio function $t_{jk}^{-1}e_k/e_j$ is in the form 
\[
\frac{t_{jk}^{-1}e_k}{e_j} = 1+\sum_{|\alpha|\geq 1} f_{kj, \alpha}(z_j)\cdot w_j^\alpha. 
\]
We say that the system $\{(V_j, e_j, w_j)\}$ is of {\it type $n$} if 
$f_{kj, \alpha}\equiv0$ for any $j, k$ and any $\alpha$ with $|\alpha| < n$. 

Assume that $L$ is semi-positive. 
Take a $C^\infty$ Hermitian metric $h$ on $L$ with semi-positive curvature. 
Denote by $\varphi_j(z_j, w_j)$ the local weight function of $h$ on each $V_j$. 
Note that $\varphi_j$'s are plurisubharmonic. 
As $|t_{jk}|=1$, one have that 
\[
\varphi_k(z_k, w_k)
 = -\log |e_k|_{h(z_k, w_k)}^2
 = -\log |e_j|_{h(z_j, w_j)}^2-\log \left|1+\sum_{|\alpha|\geq 1} f_{kj, \alpha}(z_j)\cdot w_j^\alpha\right|^2
\]
if $(z_j, w_j)=(z_k, w_k)$. 
By considering Taylor expansion of the function $x\mapsto \log (1+x)$, one have that 
\begin{equation}\label{eq:diff_phi}
\varphi_k-\varphi_j = 
-\sum_{|\alpha|=n} f_{kj, \alpha}(z_j)\cdot w_j^\alpha
-\sum_{|\alpha|=n} \overline{f_{kj, \alpha}(z_j)}\cdot \overline{w_j^\alpha}
+O(|w_j|^{n+1})
\end{equation}
holds if $\{(V_j, e_j, w_j)\}$ is a system of type $n$, 
where 
$|w_j|:=\sqrt{|w_j^1|^2+|w_j^2|^2+\cdots |w_j^r|^2}$. 


\subsection{Definition and well-definedness of the obstruction classes}\label{section:3_2}

Let $X$, $Y$, $L$, $\{(U_j, z_j)\}$ and $\{(V_j, e_j, w_j)\}$ be as in the previous subsection. 
Here we drop the assumption that $L$ is semi-positive, 
and consider (inductive) linearization of the transition functions of $e_j$'s in each finite order jet. 

Assume that $\{(V_j, e_j, w_j)\}$ is a system of type $n$. 
Then the expansion of the function $t_{jk}^{-1}e_k/e_j$ around $U_{jk}$ is in the form 
\[
\frac{t_{jk}^{-1}e_k}{e_j} = 1+\sum_{|\alpha|=n} f_{kj, \alpha}(z_j)\cdot w_j^\alpha + O(|w_j|^{n+1}). 
\]
As 
\[
\frac{t_{jk}^{-1}e_k}{e_j}  \cdot \frac{t_{k \ell}^{-1}e_\ell}{e_k} \cdot \frac{t_{\ell j}^{-1}e_j}{e_\ell}  = 1,
\]
one have that $\{(U_{jk},\ \sum_{|\alpha|=n} f_{kj, \alpha}(z_j)\cdot dw_j^\alpha)\}$ satisfies the $1$-cocycle condition, where $dw_j^\alpha := \bigotimes_{\lambda=1}^r (dw_j^\lambda)^{\otimes\alpha_\lambda}$. 
We denote by $u_n(Y, X, L; \{(V_j, e_j, w_j)\})$ the class
\[
\left[\left\{\left(U_{jk},\ \sum_{|\alpha|=n} f_{kj, \alpha}(z_j)\cdot dw_j^\alpha\right)\right\}\right]
\in \check{H}^1(\{U_j\}, \mathcal{O}_Y(S^nN_{Y/X}^*)), 
\]
and call it as {\it $n$-th obstruction class} for the linearization of the transition functions of $L$. 

In the rest of this subsection, we discuss the well-definedness of this class of $H^1(Y, S^nN_{Y/X}^*)$; i.e. the dependence of this class on the choice of a system $\{(V_j, e_j, w_j)\}$ 
of type $n$. 
First we show the following: 

\begin{lemma}\label{lem:well-def_1}
Let $\{(V_j, e_j, w_j)\}$ be a system of type $n$. 
Take another local frame $\widehat{e}_j$ of $L$ on $V_j$ such that $\{(V_j, \widehat{e}_j, w_j)\}$ is also a system of type $n$. 
Assume $t_{jk}^{-1}e_k=e_j$ and 
$t_{jk}^{-1}\widehat{e}_k=\widehat{e}_j$ hold on each $U_{jk}$. 
Assume also that one of the following three conditions holds: \\
$(i)$ $n=1$, \\
$(ii)$ $N_{Y/X}$ is unitary flat and the system $\{(V_j, w_j)\}$ of local defining functions of $Y$ is of type $n$, or\\
$(iii)$ $H^0(Y, S^mN_{Y/X}^*)=0$ holds for any integer $m$ with $1\leq m\leq n-1$. \\
Then it holds that 
$u_n(Y, X, L; \{(V_j, e_j, w_j)\})=u_n(Y, X, L; \{(V_j, \widehat{e}_j, w_j)\})$. 
\end{lemma}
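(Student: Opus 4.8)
The plan is to compare the two systems $\{(V_j,e_j,w_j)\}$ and $\{(V_j,\widehat e_j,w_j)\}$ by recording the ratio $g_j:=\widehat e_j/e_j$, a nowhere-vanishing holomorphic function on $V_j$. Since both $t_{jk}^{-1}e_k=e_j$ and $t_{jk}^{-1}\widehat e_k=\widehat e_j$ hold on $U_{jk}$, the functions $g_j$ satisfy $g_j=g_k$ on $U_{jk}$, hence $\{g_j\}$ glues to a global nowhere-vanishing holomorphic function $g$ on the neighborhood $V$ of $Y$. Restricting to $Y$ and using that $Y$ is compact, $g|_Y$ is a nonzero constant $c$; after rescaling we may assume $c=1$, i.e.\ $g_j(z_j,0)=1$ for all $j$. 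The goal then is to show that expanding $g_j$ along $w_j$ does not alter the degree-$n$ coefficient of the transition ratio.

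The key computation: write $g_j=1+\sum_{|\beta|\ge 1}g_{j,\beta}(z_j)w_j^\beta$ (the constant term being $1$). From $t_{jk}^{-1}\widehat e_k/\widehat e_j=(g_k/g_j)\cdot(t_{jk}^{-1}e_k/e_j)$ and $g_j=g_k$ on $U_{jk}$, one finds that in fact $\widehat e_j/\widehat e_k$ and $e_j/e_k$ have the \emph{same} transition data — but the subtlety is that $g_k/g_j$ is evaluated at the point $(z_j,w_j)$ versus $(z_k,w_k)$, and these coordinate pull-backs of functions originally defined on $U_j$ versus $U_k$ differ at order $\ge 1$ in $w$ via the change-of-defining-functions formula. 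So the relevant identity is that $g_k(z_k,w_k)=g_j(z_j,w_j)$ as functions near $U_{jk}$, and one must expand both sides in the $w_j$-coordinate. The point is that, because $\{(V_j,e_j,w_j)\}$ is of type $n$, the transition ratio $t_{jk}^{-1}e_k/e_j$ agrees with $1$ up to order $n-1$; to extract the order-$n$ part of $\widehat e$'s transition ratio one only needs $g_j,g_k$ up to order $n-1$. In case $(i)$ $n=1$ there is nothing below order $1$, so $g_j\equiv1+O(|w_j|)$ contributes only to order $\ge1$ and the $w_j$-change-of-coordinates correction is itself $O(|w_j|^2)$, giving equality of the order-$1$ classes immediately. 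In case $(iii)$, the vanishing $H^0(Y,S^mN_{Y/X}^*)=0$ for $1\le m\le n-1$ forces the lower-order coefficients $g_{j,\beta}$ (for $1\le|\beta|\le n-1$) to patch into global sections of $S^{|\beta|}N_{Y/X}^*$ and hence to vanish, so again $g_j=1+O(|w_j|^n)$ and the order-$n$ class is unchanged. In case $(ii)$, the additional hypothesis that $\{(V_j,w_j)\}$ is of type $n$ means $dw_j=S_{jk}dw_k$ with no higher corrections up to order $n$, so the pull-back ambiguity in comparing $g_j$ and $g_k$ disappears up to the needed order, and the same argument applies: the lower-order parts of $g_j$ transform as sections of powers of $N_{Y/X}^*$ over the overlaps in a way that contributes a coboundary, not a change of class.

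Concretely, I would organize the proof as: (1) produce the global function $g$ on $V$, normalize $g|_Y=1$; (2) write the transformation law relating the two families of coefficients $f_{kj,\alpha}$ and $\widehat f_{kj,\alpha}$, isolating the degree-$n$ terms; (3) in each of the three cases, argue that the correction term between $\widehat f$ and $f$ at degree $n$ is a Čech coboundary $\{(U_{jk}, (\delta \xi)_{jk})\}$ for some $0$-cochain $\xi$ built from the degree-$\le n-1$ coefficients of $g_j$; (4) conclude that the two cohomology classes coincide. The main obstacle will be step (3): tracking precisely how the pull-backs ${\rm Pr}_{U_j}^*$ versus ${\rm Pr}_{U_k}^*$ interact with the expansion, and verifying that in case $(ii)$ the type-$n$ condition on $\{(V_j,w_j)\}$ is exactly what kills the cross-terms that would otherwise spoil the coboundary structure; in case $(iii)$ the obstacle is instead the bookkeeping that the surviving $g_{j,\beta}$'s with $|\beta|\le n-1$ really do define global holomorphic sections of $S^{|\beta|}N_{Y/X}^*$ (so that the cohomological hypothesis can be invoked), which requires checking their transformation law under the $S_{jk}$-action.
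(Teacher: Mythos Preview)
Your central claim---that the ratios $g_j:=\widehat e_j/e_j$ glue to a global holomorphic function on the neighborhood $V$---is false, and this error propagates through the rest of the argument. The hypothesis $t_{jk}^{-1}e_k=e_j$ and $t_{jk}^{-1}\widehat e_k=\widehat e_j$ is stated only on $U_{jk}\subset Y$, not on $V_{jk}$; on $V_{jk}$ one has
\[
\frac{g_k}{g_j}=\frac{\widehat e_k/\widehat e_j}{e_k/e_j}
=\frac{1+\sum_{|\alpha|\ge n}\widehat f_{kj,\alpha}w_j^\alpha}{1+\sum_{|\alpha|\ge n}f_{kj,\alpha}w_j^\alpha},
\]
which equals $1$ precisely when the two transition expansions coincide---i.e.\ precisely when the lemma is already trivial. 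Your later assertion ``the relevant identity is that $g_k(z_k,w_k)=g_j(z_j,w_j)$ as functions near $U_{jk}$'' repeats the same mistake: these are values of two genuinely different functions at the same point, and they differ at order $n$ by exactly the quantity $\widehat f-f$ you are trying to control. Consequently, the mechanism you invoke in case $(iii)$ (that the $g_{j,\beta}$ for $|\beta|\le n-1$ ``patch into global sections'' because $g$ is global) has no foundation, and the case-$(i)$ and case-$(ii)$ arguments inherit the same gap.

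The paper's proof avoids this by never claiming $g_j=g_k$ off $Y$. Instead it writes the tautological identity
\[
\frac{t_{jk}^{-1}\widehat e_k}{\widehat e_j}\cdot\frac{\widehat e_j}{e_j}
=\frac{t_{jk}^{-1}e_k}{e_j}\cdot\frac{\widehat e_k}{e_k}
\]
and expands both sides in $w_j$. Because both systems are of type $n$, the $f$- and $\widehat f$-terms only enter at order $\ge n$, so comparing orders $0,1,\dots,n-1$ yields relations purely among the coefficients $A_{j,\alpha}$ of $g_j$ and $A_{k,\alpha}$ of $g_k$. An induction on the order (using constancy of sections of unitary flat bundles in case $(ii)$, or the vanishing $H^0(Y,S^mN_{Y/X}^*)=0$ in case $(iii)$) shows that the $\{A_{j,\alpha}\}$ with $|\alpha|\le n-1$ patch to global sections of $S^{|\alpha|}N_{Y/X}^*$; only then does the order-$n$ comparison yield $\widehat f_{kj,\alpha}-f_{kj,\alpha}$ as the \v Cech coboundary of $\{A_{j,\alpha}/A_0\}_{|\alpha|=n}$. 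Your outline in step (3) is the right shape, but it must be driven by this order-by-order comparison rather than by an appeal to a global $g$ that does not exist.
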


\begin{proof}
We let 
\[
\frac{t_{jk}^{-1}\widehat{e}_k}{\widehat{e}_j} = 1+\sum_{|\alpha|\geq n} \widehat{f}_{kj, \alpha}(z_j)\cdot w_j^\alpha 
\]
and 
\[
\frac{\widehat{e}_j}{e_j} = \sum_{|\alpha|\geq 0} A_{j, \alpha}(z_j)\cdot w_j^\alpha
\]
be the expansions. As it holds that
\[
\frac{t_{jk}^{-1}\widehat{e}_k}{\widehat{e}_j} \cdot \frac{\widehat{e}_j}{e_j}
= \frac{t_{jk}^{-1}e_k}{e_j} \cdot \frac{\widehat{e}_k}{e_k}, 
\]
we will compare the left hand side 
\begin{equation}\label{eq:welldef_1}
\frac{t_{jk}^{-1}\widehat{e}_k}{\widehat{e}_j} \cdot \frac{\widehat{e}_j}{e_j}
=\left(1+\sum_{|\alpha|\geq n} \widehat{f}_{kj, \alpha}(z_j)\cdot w_j^\alpha\right)\cdot \left(\sum_{|\alpha|\geq 0} A_{j, \alpha}(z_j)\cdot w_j^\alpha\right)
\end{equation}
and right hand side
\begin{equation}\label{eq:welldef_2}
\frac{t_{jk}^{-1}e_k}{e_j} \cdot \frac{\widehat{e}_k}{e_k}
=\left(1+\sum_{|\alpha|\geq n} f_{kj, \alpha}(z_j)\cdot w_j^\alpha\right)\cdot \left(\sum_{|\alpha|\geq 0} A_{k, \alpha}(z_k)\cdot w_k^\alpha\right)
\end{equation}
in each order. 

First, by comparing $0$-th order of the equations (\ref{eq:welldef_1}) and (\ref{eq:welldef_2}), 
one have that $A_{j, 0}=A_{k, 0}$ for each $j$ and $k$, where we are simply denoting by $0$ the multi-index $(0, 0, \dots, 0)$. 
Thus we have that there exists a constant $A_0$ such that $A_{j, 0}=A_0$ holds on each $j$. 
As both $\widehat{e}_j$ and $e_j$ are local frames, one have that $A_0\in\mathbb{C}^*:=\mathbb{C}\setminus\{0\}$. 

Next, let us observe the case where $n=1$. 
By comparing the equations (\ref{eq:welldef_1}) and (\ref{eq:welldef_2}), one have that
\[
\sum_{|\alpha|=1} A_0\cdot \widehat{f}_{kj, \alpha}(z_j)\cdot w_j^\alpha+ \sum_{|\alpha|=1} A_{j, \alpha}(z_j)\cdot w_j^\alpha
= \sum_{|\alpha|= 1} A_0f_{kj, \alpha}(z_j)\cdot w_j^\alpha
+\sum_{|\alpha|= 1} A_{k, \alpha}(z_k)\cdot w_k^\alpha + O(|w_j|^2). 
\]
Thus one have that
\[
\sum_{|\alpha|= 1} (\widehat{f}_{kj, \alpha}(z_j)-f_{kj, \alpha}(z_j))\cdot dw_j^\alpha
=\frac{1}{A_0}\cdot \left(
\sum_{|\alpha|=1} A_{j, \alpha}(z_j)\cdot dw_j^\alpha
- \sum_{|\alpha|= 1} A_{k, \alpha}(z_k)\cdot dw_k^\alpha
\right), 
\]
which proves the assertion $(i)$. 

Finally, let us assume either the assumptions $(ii)$ or $(iii)$. 
Take an integer $m$ such that $1\leq m\leq n-1$. 
As an inductive assumption, we assume that 
$A_{j, \alpha}$ is a constant for each $j$ if $|\alpha|<m$. 
Then, as it follows from a simple inductive observation that $\{(U_j, \sum_{|\alpha|=\mu}A_{j, \alpha}dw_j^\alpha)\}$ glue to define an element of $H^0(Y, S^\mu N_{Y/X}^*)$, 
one have the following again by comparing the equations (\ref{eq:welldef_1}) and (\ref{eq:welldef_2}): 
\begin{align*}
\sum_{|\alpha| = m} A_{j, \alpha}(z_j)\cdot w_j^\alpha
&=
\sum_{|\alpha| = m} A_{k, \alpha}(z_k)\cdot w_k^\alpha + O(|w_k|^{m+1}). 
\end{align*}
Therefore we have that
$\left\{\left(U_j,\ \sum_{|\alpha| = m} A_{j, \alpha}(z_j)\cdot dw_j^\alpha\right)\right\}$ 
also glues up to define a global section of $S^mN_{Y/X}^*$. 

From the argument above, one have that 
\[
\sum_{|\alpha|= n} (\widehat{f}_{kj, \alpha}(z_j)-f_{kj, \alpha}(z_j))\cdot dw_j^\alpha
=\frac{1}{A_0}\cdot \left(
\sum_{|\alpha|=n} A_{j, \alpha}(z_j)\cdot dw_j^\alpha
- \sum_{|\alpha|= n} A_{k, \alpha}(z_k)\cdot dw_k^\alpha
\right), 
\]
from which the assertions follow. 
Note that, in order to proceed the induction in the argument above for the assertion $(ii)$, we used the constantness result for a global section of a unitary flat vector bundle, see \cite[Lemma 2.1, Remark 2.7]{K2018}. 
\end{proof}

\begin{proposition}\label{prop:well-def_main}
Let $\{(V_j, e_j, w_j)\}$ be a system of type $n$. 
Take another system $\{(\widehat{V}_j, \widehat{e}_j, \widehat{w}_j)\}$ of type $n$. 
Assume also that one of the following three conditions holds: \\
$(i)$ $n=1$, \\
$(ii)$ $N_{Y/X}$ is unitary flat and the system $\{(V_j, w_j)\}$ of local defining functions of $Y$ is of type $n$, or\\
$(iii)$ $H^0(Y, S^mN_{Y/X}^*)=0$ holds for any integer $m$ with $1\leq m\leq n-1$. \\
Then it holds that 
$u_n(Y, X, L; \{(V_j, e_j, w_j)\})=u_n(Y, X, L; \{(\widehat{V}_j, \widehat{e}_j, \widehat{w}_j)\})$. 
\end{proposition}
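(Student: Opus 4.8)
The plan is to reduce Proposition \ref{prop:well-def_main} to Lemma \ref{lem:well-def_1} by interpolating between two arbitrary systems of type $n$ through an intermediate system that keeps the old local defining functions $w_j$ but changes only the frames. Concretely, given $\{(V_j, e_j, w_j)\}$ and $\{(\widehat V_j, \widehat e_j, \widehat w_j)\}$, I would first pass to a common refinement of the two open coverings of $Y$ (together with the associated neighborhoods in $X$), which does not alter any of the cohomology classes since \v{C}ech cohomology is computed as a direct limit over refinements; so from now on both systems live over the same covering $\{U_j\}$ of $Y$, with neighborhoods $W_j := V_j \cap \widehat V_j$.

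The core step is to compare $\{(W_j, e_j, w_j)\}$ with $\{(W_j, e_j, \widehat w_j)\}$, i.e. to show that changing only the system of local defining functions (while fixing the frames $e_j$) does not change $u_n$. Write $\widehat w_j = \Lambda_j(z_j)\cdot w_j + (\text{higher order in } w_j)$; under assumption (i) there is nothing higher to worry about beyond first order, under (ii) the hypothesis that $\{(V_j,w_j)\}$ is of type $n$ forces the relevant comparison maps to be governed by the unitary matrices $S_{jk}$ and the change is absorbed by the $\mathrm{U}(r)$-ambiguity already built into the statement of the Ueda classes, and under (iii) the vanishing $H^0(Y, S^m N_{Y/X}^*) = 0$ for $1 \le m \le n-1$ forces the intermediate jet coefficients of the coordinate change to be constant (hence to drop out when one restricts the degree-$n$ part to the cocycle), by the same inductive bookkeeping as in the proof of Lemma \ref{lem:well-def_1}. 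After this, $\{(W_j, e_j, \widehat w_j)\}$ and $\{(W_j, \widehat e_j, \widehat w_j)\}$ differ only by a change of frame with the same defining functions, so Lemma \ref{lem:well-def_1} applies verbatim and gives equality of the two obstruction classes.

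A subtlety I would address explicitly: one must check that $\{(W_j, e_j, \widehat w_j)\}$ really is again a system of type $n$, i.e. that replacing $w_j$ by $\widehat w_j$ does not lower the order of the first nonvanishing term in the expansion of $t_{jk}^{-1}e_k/e_j$. This follows because the leading ($n$-th order) term transforms tensorially — it lives in $S^n N_{Y/X}^*$ — and a coordinate change acts on it by an invertible (fiberwise linear) transformation, so it cannot become zero unless it was zero to begin with; lower-order terms were already absent by hypothesis and remain absent. I would also recall, as in the last remark of the proof of Lemma \ref{lem:well-def_1}, that in case (ii) the induction that produces constant jet coefficients uses the rigidity of global sections of unitary flat bundles (\cite[Lemma 2.1, Remark 2.7]{K2018}), and in case (iii) it uses the stated vanishing directly.

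The main obstacle I anticipate is bookkeeping the combined effect of a frame change and a coordinate change at order $n$ simultaneously: a priori the two changes interact, and the cross-terms between the $A_{j,\alpha}$ (from $\widehat e_j/e_j$) and the Jacobian-type coefficients of $\widehat w_j$ in terms of $w_j$ could contribute to the degree-$n$ part. Factoring the comparison into the two one-variable-at-a-time steps above is precisely what neutralizes this: each intermediate step is covered by (a mild extension of) Lemma \ref{lem:well-def_1}, and the cross-terms never appear because one never varies both data at once. The only genuine work is therefore the "coordinate-change only" step, and there the three hypotheses (i)--(iii) are exactly what is needed to kill the potentially non-constant intermediate-order contributions.
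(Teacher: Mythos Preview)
Your overall factorization strategy (pass to a common refinement, then change one datum at a time) matches the paper's, but you have inverted where the difficulty lies, and this leads to a genuine gap.

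The step you call the ``core step'' --- changing $w_j$ to $\widehat w_j$ while keeping the frames $e_j$ --- is in fact trivial, and the paper disposes of it in one line (``clearly \ldots\ by definition''). The reason is that the cocycle $\sum_{|\alpha|=n} f_{kj,\alpha}\,dw_j^\alpha$ is nothing but the image of the function $t_{jk}^{-1}e_k/e_j - 1 \in I_Y^n$ under the intrinsic symbol map $I_Y^n \to I_Y^n/I_Y^{n+1} \cong S^nN_{Y/X}^*$. This section of $S^nN_{Y/X}^*$ over $U_{jk}$ does not depend on which local defining functions you use to write it down; a change $w_j\rightsquigarrow\widehat w_j$ changes the coefficients $f_{kj,\alpha}$ and the frame $dw_j^\alpha$ in exactly compensating ways. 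So none of the hypotheses $(i)$--$(iii)$ are needed here, and your discussion of how they would be used in this step is misplaced.

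Conversely, the step you treat as routine --- ``Lemma~\ref{lem:well-def_1} applies verbatim'' to compare $\{(W_j,e_j,\widehat w_j)\}$ with $\{(W_j,\widehat e_j,\widehat w_j)\}$ --- is precisely where the real content is, and Lemma~\ref{lem:well-def_1} does \emph{not} apply verbatim. That lemma carries the extra hypothesis that both frames satisfy the \emph{same} unitary relation on $Y$, i.e.\ $t_{jk}^{-1}e_k=e_j$ and $t_{jk}^{-1}\widehat e_k=\widehat e_j$ with the \emph{same} $t_{jk}$. In general the second system comes with its own $\widehat t_{jk}\in\mathrm{U}(1)$, and you must first bridge the two unitary structures. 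The paper does this by observing that $\{\widehat t_{jk}/t_{jk}\}$ is a $\mathrm{U}(1)$-cocycle that becomes a coboundary in $\mathcal{O}_Y^*$ (both represent $L|_Y$), and then invoking the injectivity of $H^1(Y,\mathrm{U}(1))\to H^1(Y,\mathcal{O}_Y^*)$ (this is exactly where the standing assumption that $Y$ is compact K\"ahler is used) to write $\widehat t_{jk}/t_{jk}=c_k/c_j$ with $c_j\in\mathrm{U}(1)$; replacing $e_j$ by $c_je_j$ then puts you in the situation where Lemma~\ref{lem:well-def_1} applies. Your proposal omits this argument entirely.
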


Based on this Proposition \ref{prop:well-def_main}, we simply denote by $u_n(Y, X, L)$ the $n$-th obstruction class when either of three assumptions in the proposition holds.

\begin{proof}
By taking refinements, we may assume that two open coverings $\{V_j\}$ and $\{\widehat{V}_j\}$ coincide. 

First, we show the proposition by assuming $w_j=\widehat{w}_j$ on each $V_j (=\widehat{V}_j)$. 
Let $\widehat{t}_{jk}$ be an element of $\mathrm{U}(1)$ such that $\widehat{t}_{jk}^{-1}\widehat{e}_k=\widehat{e}_j$ holds on each $U_{jk}$. 
As both $\{t_{jk}\}$ and $\{\widehat{t}_{jk}\}$ the transition functions of $L|_Y$, there exists 
$c_j\in\mathrm{U}(1)$ such that 
\[
\frac{\widehat{t}_{jk}}{t_{jk}} = \frac{c_k}{c_j}. 
\]
Note that here we used the fact that any unitary flat line bundle on $Y$ is trivial as unitary flat line bundle if it is analytically trivial: i.e. $H^1(Y, \mathrm{U}(1))\to H^1(Y, \mathcal{O}_Y^*)$ is injective. 
Consider a new local frame defined by $\varepsilon_j:=c_je_j$. 
Then, as it holds that 
\[
\frac{\varepsilon_k}{\varepsilon_j} = \frac{e_k}{e_j}\cdot \frac{c_k}{c_j}
=t_{jk}\cdot \frac{\widehat{t}_{jk}}{t_{jk}} = \widehat{t}_{jk}
\]
on each $U_{jk}$, it follows from Lemma \ref{lem:well-def_1} that the proof of the assertion is reduced to show that 
$u_n(Y, X, L; \{(V_j, e_j, w_j)\})=u_n(Y, X, L; \{(V_j, \varepsilon_j, w_j)\})$, which follows from 
\[
\frac{\widehat{t}_{jk}^{-1}\varepsilon_k}{\varepsilon_j} 
= \frac{c_k^{-1}}{c_j^{-1}}\cdot t_{jk}^{-1}\cdot \frac{\varepsilon_k}{\varepsilon_j}
= t_{jk}^{-1}\cdot\frac{e_k}{e_j}
\]
on each $V_{jk}$. 

Again, let $\{(V_j, e_j, w_j)\}$ and $\{(V_j, \widehat{e}_j, \widehat{w}_j)\}$ are systems of type $n$. 
From the argument above, one have that 
$u_n(Y, X, L; \{(V_j, \widehat{e}_j, \widehat{w}_j)\})=u_n(Y, X, L; \{(V_j, e_j, \widehat{w}_j)\})$. 
As it clearly holds that $u_n(Y, X, L; \{(V_j, e_j, \widehat{w}_j)\}) = u_n(Y, X, L; \{(V_j, e_j, w_j)\})$ by definition, the proposition holds. 
\end{proof}

%

\begin{remark}
Let $Y$ be a hypersurface of $X$ such that the normal bundle $N_{Y/X}$ is unitary flat and that $L=[Y]$. 
Assume that there exists a system $\{(V_j, w_j)\}$ of local defining functions of $Y$ of type $n$: 
i.e. it holds that 
\[
t_{jk}w_k = w_j + \sum_{\nu=n+1}^\infty g_{kj, \nu}(z_j)\cdot w_j^\nu
\]
on each $V_{jk}$ for some holomorphic functions $g_{kj, \nu}$'s. 
In this case, $n$-th Ueda class $u_n(Y, X)$ is the class defined by the $1$-cocycle 
$\{U_{jk}, g_{kj, n+1}(z_j)\cdot dw_j^n\}$. 
Let $\{(V_j, e_j)\}$ be a system of local frames of $[Y]$ such that $e_j$ corresponds to the meromorphic function $1/w_j$ via the isomorphism $\mathcal{O}_X([Y])\cong \mathcal{O}_X(Y)$. 
Then, as 
\[
\frac{t_{jk}^{-1}e_k}{e_j}
= \frac{w_j}{t_{jk}w_k}
= \left(1 + \sum_{\nu=n}^\infty g_{kj, \nu+1}(z_j)\cdot w_j^\nu\right)^{-1}
= 1 - g_{kj, n+1}(z_j)\cdot w_j^n +O(w_j^{n+1}), 
\]
one have that our $n$-th obstruction class $u_n(Y, X, L)$ coincides with $n$-th Ueda class $u_n(Y, X)$ up to multiplication by a $\mathbb{C}^*$-constant. 
\end{remark}


\subsection{Linearization in the first order jet}\label{section:3_3}

In what follows, we assume that $L$ is semi-positive. 
Let $h$ be a $C^\infty$ Hermitian metric with semi-positive curvature. 
Denote by $\varphi_j$ the local weight function of $h$ on each  $V_j$. 
By Taylor's theorem, there exists a $C^\infty$'ly smooth functions $R^{(2)}_j(z_j, w_j)$ on each $V_j$ such that 
\[
\begin{cases}
\varphi_j(z_j, w_j) =\varphi_j^{(0)}(z_j) + \sum_{\lambda=1}^r\left(\varphi_j^{(\lambda)}(z_j)\cdot w_j^\lambda + \overline{\varphi_j^{(\lambda)}(z_j)}\cdot \overline{w_j^\lambda}\right) + R^{(2)}_j(z_j, w_j) \\
R^{(2)}_j(z_j, w_j)=O(|w_j|^2)\ \text{as}\ |w_j|\to 0. 
\end{cases}
\]
Then we have that 

\begin{align*}
\varphi_k-\varphi_j 
&= (\varphi_k^{(0)}-\varphi_j^{(0)}) + 
\sum_{\lambda=1}^r \left(\varphi_k^{(\lambda)}\cdot w_k^\lambda-\varphi_j^{(\lambda)}\cdot w_j^\lambda + \overline{\varphi_k^{(\lambda)}}\cdot \overline{w_k^\lambda}-\overline{\varphi_j^{(\lambda)}}\cdot \overline{w_j^\lambda}\right) + O(|w_j|^2). 
\end{align*}
By the equation (\ref{eq:diff_phi}), we have that
\begin{align*}
&-\sum_{|\alpha|=1} f_{kj, \alpha}(z_j)\cdot w_j^\alpha
-\sum_{|\alpha|=1} \overline{f_{kj, \alpha}(z_j)}\cdot \overline{w_j^\alpha}\\
&=(\varphi_k^{(0)}-\varphi_j^{(0)}) + 
\sum_{\lambda=1}^r (\varphi_k^{(\lambda)}\cdot w_k^\lambda-\varphi_j^{(\lambda)}\cdot w_j^\lambda) +
\sum_{\lambda=1}^r (\overline{\varphi_k^{(\lambda)}}\cdot \overline{w_k^\lambda}-\overline{\varphi_j^{(\lambda)}}\cdot \overline{w_j^\lambda}) + O(|w_j|^2), 
\end{align*}
from which it follows that 
\begin{equation}\label{eq:1}
\varphi_k^{(0)} = \varphi_j^{(0)}
\end{equation}
and 
\begin{equation}\label{eq:2}
-\sum_{|\alpha|=1}f_{kj, \alpha}dw_j^\alpha
 = \sum_{\lambda=1}^r \varphi_k^{(\lambda)}\cdot dw_k^\lambda
- \sum_{\lambda=1}^r\varphi_j^{(\lambda)}\cdot dw_j^\lambda. 
\end{equation}

The equation (\ref{eq:1}) implies that $\{(U_j, \varphi^{(0)}_j)\}$ defines a global function $\varphi^{(0)}\colon Y\to \mathbb{R}$. 
As the Chern curvature of $h|_Y$ is semi-positive, it clearly holds that $\varphi^{(0)}$ is plurisubharmonic. 
The compactness of $Y$ implies that $\varphi^{(0)}$ is constant. 
By changing $h$ by multiplying a constant, we always assume that $\varphi^{(0)}\equiv 0$ in what follows. 

\begin{lemma}\label{lem:1}
For any $\lambda=1, 2, \dots, r$ and any $j$, $\varphi_j^{(\lambda)}$ is a holomorphic function on $U_j$.  
\end{lemma}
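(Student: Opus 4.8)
The plan is to extract holomorphy of the $\varphi_j^{(\lambda)}$ purely from the curvature semi-positivity of $h$, i.e.\ from plurisubharmonicity of each local weight $\varphi_j$ on $V_j$, combined with the normalization $\varphi^{(0)}\equiv 0$ just obtained. The role of that normalization is that $\varphi_j$ now vanishes identically on $U_j=\{w_j=0\}$, which makes the ``horizontal'' second derivatives of $\varphi_j$ along $Y$ vanish and thereby pins down the ``mixed'' second derivatives via positivity.

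First I would fix $j$ and a point $p=(z_0,0)\in U_j$ and consider the complex Hessian of $\varphi_j$ at $p$ in the coordinates $(z_j^1,\dots,z_j^d,w_j^1,\dots,w_j^r)$; curvature semi-positivity says exactly that this Hermitian matrix is positive semi-definite at $p$. Since $\varphi_j(z_j,0)\equiv 0$, every entry of its $z_j$--$\overline{z_j}$ principal block vanishes at $p$. Then I would invoke the elementary linear-algebra fact that a positive semi-definite Hermitian matrix whose top-left principal block is zero has all entries in the corresponding rows and columns equal to zero (test the quadratic form against vectors of the form $(-sBv,v)$ and let $s\to+\infty$). This forces $\partial^2\varphi_j/\partial z_j^a\partial\overline{w_j^\lambda}=0$ at every point of $U_j$, for all $a$ and $\lambda$.

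Next I would compute this mixed derivative from the Taylor expansion displayed above: $\partial\varphi_j/\partial\overline{w_j^\lambda}=\overline{\varphi_j^{(\lambda)}(z_j)}+\partial R_j^{(2)}/\partial\overline{w_j^\lambda}$, and differentiating once more in $z_j^a$ and restricting to $w_j=0$ kills the remainder term, leaving $\partial\overline{\varphi_j^{(\lambda)}}/\partial z_j^a$. Combined with the previous step, $\partial\overline{\varphi_j^{(\lambda)}}/\partial z_j^a=\overline{\,\partial\varphi_j^{(\lambda)}/\partial\overline{z_j^a}\,}$ vanishes on $U_j$ for every $a$, which is precisely the assertion that $\varphi_j^{(\lambda)}$ is holomorphic on $U_j$.

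I do not expect a genuine obstacle; the two places needing a line of care are the block-vanishing statement for the Hessian (routine) and the claim that $R_j^{(2)}$ drops out after one $\overline{w_j^\lambda}$- and one $z_j^a$-derivative, which is clear because $R_j^{(2)}$ is the part of order $\geq 2$ in $(w_j,\overline{w_j})$ of a $C^\infty$ function, so $\partial^2 R_j^{(2)}/\partial z_j^a\partial\overline{w_j^\lambda}$ is still $O(|w_j|)$. If one prefers to avoid second derivatives, the same conclusion follows by restricting $\varphi_j$ to the affine complex discs $\zeta\mapsto(z_0+\zeta v,\zeta u)$, using subharmonicity of the restriction (which is $0$ at $\zeta=0$) and comparing its circle-averages with its central value; expanding in $\zeta$ this yields $\mathrm{Re}\sum_{a,\mu}(\partial\varphi_j^{(\mu)}/\partial\overline{z_j^a})\,\overline{v^a}\,u^\mu\geq 0$ for all $u,v$ (the order-$2$ contribution of $R_j^{(2)}$ being a positive semi-definite Hermitian form in $u$, which does not interfere), hence this quantity, linear in $u$ and antilinear in $v$, vanishes identically, again giving holomorphy of every $\varphi_j^{(\mu)}$.
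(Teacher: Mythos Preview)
Your proof is correct and follows essentially the same route as the paper's: both use that the complex Hessian of $\varphi_j$ at points of $U_j$ is positive semi-definite with vanishing $z_j$--$\overline{z_j}$ block (from $\varphi^{(0)}\equiv 0$), forcing the mixed $z_j$--$\overline{w_j^\lambda}$ entries to vanish, and then identify those entries with $(\varphi_j^{(\lambda)})_{\overline{z_j^\nu}}$ via the Taylor expansion. The only cosmetic difference is that the paper extracts the conclusion from $2\times 2$ principal minors one pair $(\lambda,\nu)$ at a time, whereas you phrase it as a block statement; your alternative circle-average argument is also just a repackaging of the same Hessian positivity (one small caution there: the parenthetical ``does not interfere'' is justified not by positivity of the $R_j^{(2)}$ contribution per se, but by its independence of $v$, which lets you scale $v$ to kill the linear term).
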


\begin{proof}
For any $\lambda=1, 2, \dots, r$, $\nu=1, 2, \dots, d$, and any $j$, we show that 
\[
(\varphi_j^{(\lambda)})_{\overline{z^\nu_j}}(z_j) := \frac{\partial}{\partial \overline{z_j^\nu}}\varphi_j^{(\lambda)}(z_j)\equiv 0
\]
holds for all $z_j\in U_j$. 
Take a point $z_j\in U_j$, and consider the Hermitian form $\langle -, -\rangle_{\lambda, \nu}\colon \mathbb{C}^2\times \mathbb{C}^2\to \mathbb{C}$ defined by 
\[
\left\langle 
\left(
    \begin{array}{c}
      a \\
      b
    \end{array}
  \right)
,\ 
\left(
    \begin{array}{c}
      c\\
	d
    \end{array}
  \right)\right\rangle_{\lambda, \nu}
:=
(a, b)
\left(
    \begin{array}{cc}
      (\varphi_j)_{w_j^\lambda \overline{w_j^\lambda}} (z_j, 0) & (\varphi_j)_{w_j^\lambda \overline{z_j^\nu}} (z_j, 0)\\
      (\varphi_j)_{z_j^\nu \overline{w_j^\lambda}} (z_j, 0) & (\varphi_j)_{z_j^\nu \overline{z_j^\nu}} (z_j, 0)
    \end{array}
  \right)
\left(
    \begin{array}{c}
      \overline{c} \\
	\overline{d}
    \end{array}
  \right). 
\]
As is clearly follows from the semi-positivity assumption for the Chern curvature of $h$, 
the Hermitian form $\langle -, -\rangle_{\lambda, \nu}$ is positive semi-definite. 
Thus one have that
\[
{\rm det}\left(
    \begin{array}{cc}
      (\varphi_j)_{w_j^\lambda \overline{w_j^\lambda}} (z_j, 0) & (\varphi_j)_{w_j^\lambda \overline{z_j^\nu}} (z_j, 0)\\
      (\varphi_j)_{z_j^\nu \overline{w_j^\lambda}} (z_j, 0) & (\varphi_j)_{z_j^\nu \overline{z_j^\nu}} (z_j, 0)
    \end{array}
  \right)\geq 0. 
\]
As $\varphi_j^{(0)}\equiv 0$, it follows that $(\varphi_j)_{z_j^\nu \overline{z_j^\nu}} (z_j, 0)=0$. 
Therefore, we have that 
$(\varphi_j)_{w_j^\lambda \overline{z_j^\nu}}(-, 0) \equiv 0$. 
The assertion follows from the calculation 
\[
(\varphi_j)_{w_j^\lambda \overline{z_j^\nu}} = (\varphi_j^{(\lambda)})_{\overline{z_j^\nu}}+ (R^{(2)}_j)_{w_j^\lambda \overline{z_j^\nu}} 
=  (\varphi_j^{(\lambda)})_{\overline{z_j^\nu}} + O(|w_j|).
\]
\end{proof}

\begin{proof}[Proof of Theorem {\ref{thm:1}}]
Assume that $L$ is semi-positive. 
We use the notation as above and show that $u_1(Y, X, L)=0$. 
Let $\widehat{e}_j$ be the section of $L$ on $V_j$ defined by 
\[
\widehat{e}_j=e_j\cdot \left(1+\sum_{\lambda=1}^r\varphi_j^{(\lambda)}(z_j)\cdot w_j^\lambda\right)
\]
for each $j$ after shrinking $V_j$ if necessary (so that $\widehat{e}_j\not=0$ at any point of $V_j$). 
By Lemma \ref{lem:1}, $\widehat{e}_j$'s are also (holomorphic) local frames. 
Theorem follows by calculating $u_1(Y, X, L)$ by using them. 
\end{proof}

Let $\widehat{e}_j$ be the one in the proof of Theorem \ref{thm:1}. 
Denote by $\widehat{\varphi}_j$ the corresponding local weight function of $h$. 
Then one have that
\begin{align*}
\widehat{\varphi}_j(z_j, w_j) 
&=-\log |\widehat{e}_j|_{h(z_j, w_j)}^2 \\
&=-\log |e_j|_{h(z_j, w_j)}^2 - \log \left|1+\sum_{\lambda=1}^r\varphi_j^{(\lambda)}(z_j)\cdot w_j^\lambda\right|^2 \\
&= \varphi_j(z_j, w_j) - \log \left|1+\sum_{\lambda=1}^r\varphi_j^{(\lambda)}(z_j)\cdot w_j^\lambda\right|^2\\
&= \varphi_j(z_j, w_j) - \sum_{\lambda=1}^r\varphi_j^{(\lambda)}(z_j)\cdot w_j^\lambda
+ \sum_{\lambda=1}^r\overline{\varphi_j^{(\lambda)}(z_j)}\cdot \overline{w_j^\lambda} + O(|w_j|^2) = O(|w_j|^2). 
\end{align*}
Thus we have that, by leaving $w_j$'s as it was and changing only $e_j$'s, 
one may assume that $\varphi_j(z_j, w_j) = O(|w_j|^2)$ holds as $|w_j|\to 0$ for each $j$. 


\subsection{Linearization in the second order jet when $Y$ is a hypersurface}\label{section:3_4}

In what follows, we assume that $Y$ is a hypersurface of $X$: i.e. $r=1$. 
We simply denote by $w_j$ the function $w_j^1$. 

By the argument in the previous subsection, we may assume that the expansion of $\varphi_j$ is in the form 
\[
\varphi_j(z_j, w_j) 
= \varphi_j^{(2, 0)}(z_j)\cdot w_j^2
+ \varphi_j^{(1, 1)}(z_j)\cdot |w_j|^2
+ \varphi_j^{(0, 2)}(z_j)\cdot \overline{w_j}^2
+ R^{(3)}_j(z_j, w_j), 
\]
where $R^{(3)}_j(z_j, w_j)$ is a $C^\infty$'ly smooth function on $V_j$ such that 
$R^{(3)}_j(z_j, w_j)=O(|w_j|^3)$ as $|w_j|\to 0$. 
Note that,  as $\varphi_j$ is real valued, one have that $\varphi_j^{(0, 2)}=\overline{\varphi_j^{(2, 0)}}$. 

Let $H_j$ be the $(d+1)\times (d+1)$ matrix with entries $(H_j)^a_b$ with $0\leq a, b\leq d$ defined by 
\[
(H_j)^a_b = \begin{cases}
 \frac{\partial^2 \varphi_j}{\partial w_j \partial \overline{w_j}} & \text{if}\ a=b=0 \\
 \frac{\partial^2 \varphi_j}{\partial w_j \partial \overline{z_j^b}} & \text{if}\ a=0, b>0 \\
 \frac{\partial^2 \varphi_j}{\partial z_j^a \partial \overline{w_j}} & \text{if}\ a>0, b=0 \\
 \frac{\partial^2 \varphi_j}{\partial z_j^a \partial \overline{z_j^b}} & \text{if}\ a, b>0
\end{cases}
\]
i.e. $H_j$ is the complex Hessian of $\varphi_j$. 
By the assumption that the curvature of $h$ is semi-positive, we have that $H_j$ is positive semi-definite definite. 
By a simple computation, one have that 
\[
(H_j)^a_b = \begin{cases}
\varphi_j^{(1, 1)} + O(|w_j|) & \text{if}\ a=b=0 \\
2(\varphi_j^{(2, 0)})_{\overline{z_j^b}}\cdot w_j + (\varphi_j^{(1, 1)})_{\overline{z_j^b}}\cdot \overline{w_j} + O(|w_j|^2)  & \text{if}\ a=0, b>0 \\
(\varphi_j^{(1, 1)})_{z_j^a}\cdot w_j + 2(\varphi_j^{(0, 2)})_{z_j^a}\cdot \overline{w_j} + O(|w_j|^2)  & \text{if}\ a>0, b=0 \\
(\varphi_j^{(2, 0)})_{z_j^a\overline{z_j^b}}\cdot w_j^2
+ (\varphi_j^{(1, 1)})_{z_j^a\overline{z_j^b}}\cdot |w_j|^2
+ (\varphi_j^{(0, 2)})_{z_j^a\overline{z_j^b}}\cdot \overline{w_j}^2 + O(|w_j|^3) & \text{if}\ a, b>0
\end{cases}.
\]
As $H_j$ is positive semi-definite, one have that $(H_j)^0_0$ is non-negative. 
Thus one have that $\varphi_j^{(1, 1)}$ is also non-negative. 

\begin{lemma}\label{prop:11_psh_pre}
The function $\varphi_j^{(1, 1)}$ is plurisubharmonic on $U_j$. 
\end{lemma}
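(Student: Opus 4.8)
The plan is to reduce the plurisubharmonicity of $\varphi_j^{(1,1)}$ to the pointwise positivity statement of Lemma \ref{lem_positive_function}, applied to the complex Hessian $H_j$ of $\varphi_j$ restricted to a normal slice. Fix an index $j$, a point $z_0\in U_j$, and a vector $v=(v^1,\dots,v^d)\in\mathbb{C}^d$. Since $H_j$ is positive semi-definite at every point of $V_j$, so is the principal submatrix $\big((H_j)^a_b\big)_{1\le a,b\le d}$ obtained by deleting the $0$-th row and column; hence the function
\[
\Phi(w):=\sum_{a,b=1}^d (H_j)^a_b(z_0,w)\,v^a\overline{v^b}
\]
is non-negative for all $w$ in a neighborhood $\Omega$ of $0\in\mathbb{C}$.

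Next I would substitute the expansion of $(H_j)^a_b$ for $a,b>0$ recorded just before the statement. This gives
\[
\Phi(w)=A\,w^2+B\,|w|^2+C\,\overline{w}^2+O(|w|^3),\qquad
B:=\sum_{a,b=1}^d(\varphi_j^{(1,1)})_{z_j^a\overline{z_j^b}}(z_0)\,v^a\overline{v^b},
\]
where $A$ and $C$ are the analogous constants built from the second $z$-derivatives of $\varphi_j^{(2,0)}$ and $\varphi_j^{(0,2)}$. This is precisely the hypothesis of Lemma \ref{lem_positive_function} with $n=2$ and $(c_2,c_1,c_0)=(A,B,C)$, so part $(ii)$ of that lemma forces the middle coefficient $c_{n/2}=B$ to be a non-negative real number. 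Since $z_0\in U_j$ and $v\in\mathbb{C}^d$ were arbitrary, this says exactly that the complex Hessian matrix $\big((\varphi_j^{(1,1)})_{z_j^a\overline{z_j^b}}\big)_{1\le a,b\le d}$ is positive semi-definite at every point of $U_j$; as $\varphi_j^{(1,1)}$ is $C^\infty$, it is plurisubharmonic, which is the assertion.

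I do not expect a serious obstacle here: the whole content lies in identifying the correct object to which Lemma \ref{lem_positive_function} should be applied. The only points demanding care are (a) noting that a principal submatrix of a positive semi-definite matrix is again positive semi-definite, so that $\Phi\ge 0$ genuinely holds, and (b) matching the index conventions between the $w^p\overline{w}^{n-p}$ normalization of the fundamental lemma and the $w^2,\ |w|^2,\ \overline{w}^2$ terms appearing in the Hessian expansion, so that it is indeed the $|w|^2$-coefficient — i.e.\ the complex Hessian of $\varphi_j^{(1,1)}$ — that is certified non-negative.
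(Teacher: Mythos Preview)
Your proof is correct and is essentially identical to the paper's: the paper also applies the positive semi-definiteness of $H_j$ to the vector $v_\xi=(0,\xi_1,\dots,\xi_d)$ (which is exactly your principal-submatrix observation), expands the resulting non-negative function as a quadratic in $w,\overline{w}$, and invokes Lemma~\ref{lem_positive_function} with $n=2$ to conclude that the $|w|^2$-coefficient $\sum_{\nu,\mu}\xi_\nu(\varphi_j^{(1,1)})_{z_j^\nu\overline{z_j^\mu}}\overline{\xi_\mu}$ is non-negative. Your write-up is in fact slightly more explicit about fixing a base point and naming the coefficients, but the argument is the same.
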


\begin{proof}
Take $d$ complex constants $\xi_1, \dots, \xi_d\in \mathbb{C}$. 
Denote by $v_\xi$ the vector $(0, \xi_1, \xi_2, \dots, \xi_d)\in \mathbb{C}^{d+1}$. 
Then, as $H_j$ is positive semi-definite, one have that 
\begin{align*}
v_\xi H_j {}^t\!\overline{v_\xi}
&= \sum_{a=1}^d\sum_{b=1}^d \xi_a(H_j)^a_b\overline{\xi_b},
\end{align*}
is non-negative. 
By applying Lemma \ref{lem_positive_function} to this function, one have that 
\[
\sum_{\nu=1}^d\sum_{\mu=1}^d \xi_\nu (\varphi_j^{(1, 1)})_{z_j^\nu\overline{z_j^\mu}}\overline{\xi_\mu}\geq 0, 
\]
from which the assertion follows. 
\end{proof}

\begin{proposition}\label{prop:11_psh}
Assume that $\varphi_j^{(1, 1)}\not\equiv 0$. 
Then the function $z_j\mapsto \log \varphi_j^{(1, 1)}(z_j)$ is plurisubharmonic. 
\end{proposition}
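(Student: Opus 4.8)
The plan is to show that $\log\varphi_j^{(1,1)}$ is plurisubharmonic by exploiting the positive semi-definiteness of the Hessian matrix $H_j$ more carefully than in Lemma~\ref{prop:11_psh_pre}, this time using the full $(d+1)\times(d+1)$ structure rather than only the lower-right $d\times d$ block. The key point is that for positive semi-definite matrices, Schur-complement-type inequalities produce quotients of entries, and such quotients are exactly what one needs to control $\partial\overline\partial\log\varphi_j^{(1,1)}$.

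\textbf{Key steps.} First I would fix a point $z_0\in U_j$ with $\varphi_j^{(1,1)}(z_0)\neq 0$ and, after an affine change of the $z_j$-coordinates, reduce to checking that $(\partial\overline\partial\log\varphi_j^{(1,1)})(z_0)$ is nonnegative in a single chosen direction, say $\partial/\partial z_j^1$. Writing $\psi:=\varphi_j^{(1,1)}$, one has the pointwise identity
\[
\frac{\partial^2\log\psi}{\partial z_j^1\,\partial\overline{z_j^1}}
=\frac{1}{\psi}\left(\psi_{z_j^1\overline{z_j^1}}-\frac{|\psi_{z_j^1}|^2}{\psi}\right),
\]
so since $\psi>0$ near $z_0$ it suffices to prove $\psi\cdot\psi_{z_j^1\overline{z_j^1}}\geq |\psi_{z_j^1}|^2$ at $z_0$. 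Next I would go back to the expansions of the entries $(H_j)^a_b$ displayed before Lemma~\ref{prop:11_psh_pre}: consider the $3\times 3$ principal submatrix of $H_j$ indexed by $\{0,1\}$ together with a direction, or more precisely test the positive semi-definiteness of $H_j$ against vectors of the form $v=(t,\,s\,e_1)$ with $e_1$ the first coordinate vector among the $z$-directions, where $t$ will be chosen proportional to $\overline{w_j}$ and $s$ a free parameter. The resulting scalar $vH_j{}^t\overline v$ is a nonnegative real-valued function of $(z_j,w_j)$; extracting its $|w_j|^2$-coefficient via Lemma~\ref{lem_positive_function} yields an inequality of the form $\psi\,|t|^2 + (\text{cross terms in }t\bar s)\,+\,\psi_{z_j^1\overline{z_j^1}}\,|s|^2\geq 0$ in which the cross terms involve $\psi_{z_j^1}$ and $\psi_{\overline{z_j^1}}=\overline{\psi_{z_j^1}}$; optimizing over $t,s$ (equivalently, the discriminant condition for this real quadratic form to be nonnegative) gives exactly $\psi\cdot\psi_{z_j^1\overline{z_j^1}}\ge|\psi_{z_j^1}|^2$.

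\textbf{Carrying out the extraction.} Concretely: plug $v=(t,s,0,\dots,0)$ into $vH_j{}^t\overline v$ and read off, from the entry expansions, the coefficient of each monomial $w_j^p\overline{w_j}^{q}$ with $p+q=2$. The $(0,0)$-entry contributes $\varphi_j^{(1,1)}|t|^2 = \psi|t|^2$ but only at order $|w_j|^0$; to see $\psi$ appear at order $|w_j|^2$ one instead takes $t$ itself of order $|w_j|$, i.e.\ substitutes $t\mapsto t\,\overline{w_j}$, so that $(H_j)^0_0|t|^2|w_j|^2 = \psi|t|^2|w_j|^2+O(|w_j|^3)$; the $(1,1)$-entry contributes $\psi_{z_j^1\overline{z_j^1}}|w_j|^2|s|^2+O(|w_j|^3)$ (using that by the normalization of \S\ref{section:3_3} the lower-order terms $\varphi_j^{(0)},\varphi_j^{(\lambda)}$ are gone, so the $|w_j|^0$ and $|w_j|^1$ parts of $(H_j)^1_1$ vanish); and the $(0,1)$ and $(1,0)$ entries, paired with $t\overline{w_j}$ and $s$, contribute the cross term $\big((\psi)_{\overline{z_j^1}}\,t\,\bar s+(\psi)_{z_j^1}\,\bar t\,s\big)|w_j|^2$ plus terms of order $|w_j|^3$ or of mixed type $w_j^2,\overline{w_j}^2$ which, by Lemma~\ref{lem_positive_function}(i) applied to $vH_j{}^t\overline v\ge 0$ with $n=3$ after a further scaling, or directly by Lemma~\ref{lem_positive_function}(ii) with $n=2$, must have vanishing or sign-constrained coefficients and do not interfere. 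Thus the $|w_j|^2$-coefficient of the nonnegative function $vH_j{}^t\overline v$ is the Hermitian form $\psi|t|^2 + \psi_{\overline{z_j^1}}t\bar s+\psi_{z_j^1}\bar t s+\psi_{z_j^1\overline{z_j^1}}|s|^2$ in $(t,s)$, which by Lemma~\ref{lem_positive_function}(ii) is $\ge 0$; nonnegativity of this $2\times 2$ Hermitian form is precisely $\psi\cdot\psi_{z_j^1\overline{z_j^1}}-|\psi_{z_j^1}|^2\ge 0$, completing the proof after reverting the coordinate change.

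\textbf{Main obstacle.} The delicate point is the bookkeeping in the last step: one must be sure that when substituting $t\mapsto t\overline{w_j}$ the genuinely relevant Hermitian form in $(t,s)$ really sits at order exactly $|w_j|^2$ and that all competing monomials of total degree $2$ in $(w_j,\overline{w_j})$ — namely the $w_j^2$ and $\overline{w_j}^2$ terms coming from $\varphi_j^{(2,0)},\varphi_j^{(0,2)}$ and their derivatives — either cancel against each other or are killed by Lemma~\ref{lem_positive_function}, so that positivity of the extracted coefficient is not an artifact of having dropped something. This is where I expect the real work to lie; once the correct scaling of the test vector is identified, the inequality $\psi\psi_{z_j^1\overline{z_j^1}}\ge|\psi_{z_j^1}|^2$, and hence the plurisubharmonicity of $\log\varphi_j^{(1,1)}$, drops out from the standard fact that a $2\times 2$ Hermitian matrix with nonnegative diagonal and nonnegative determinant is positive semi-definite.
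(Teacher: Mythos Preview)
Your overall strategy is exactly the paper's: test the full Hessian $H_j$ on vectors with a nontrivial $w_j$-component, form the associated $2\times 2$ Hermitian form, and read off the $|w_j|^2$-coefficient via Lemma~\ref{lem_positive_function} to get the Schur-type inequality $\psi\,\psi_{z\overline z}\ge|\psi_z|^2$. The paper phrases this as nonnegativity of $\det\begin{pmatrix} uH_j{}^t\overline u & uH_j{}^t\overline{v_\xi}\\ v_\xi H_j{}^t\overline u & v_\xi H_j{}^t\overline{v_\xi}\end{pmatrix}$ with $u=(1,0,\dots,0)$, $v_\xi=(0,\xi)$, which is the same discriminant condition you reach by optimizing over $(t,s)$.

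However, your explicit scaling is the wrong one, and this is precisely the bookkeeping issue you flagged as the main obstacle. With $t\mapsto t\,\overline{w_j}$, the cross term $t\overline{w_j}\cdot\overline s\cdot (H_j)^0_1$ becomes
\[
t\overline s\Bigl(2(\varphi_j^{(2,0)})_{\overline{z_j^1}}\,|w_j|^2+\psi_{\overline{z_j^1}}\,\overline{w_j}^{\,2}\Bigr)+O(|w_j|^3),
\]
so at order $|w_j|^2$ you pick up $2(\varphi_j^{(2,0)})_{\overline{z_j^1}}$, not $\psi_{\overline{z_j^1}}$. The resulting inequality is $\psi\,\psi_{z_j^1\overline{z_j^1}}\ge 4\bigl|(\varphi_j^{(2,0)})_{\overline{z_j^1}}\bigr|^2$, which is true but useless for $\log\psi$. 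The correct scaling is $t\mapsto t\,w_j$: then $t w_j\cdot\overline s\cdot(H_j)^0_1$ has $|w_j|^2$-coefficient $t\overline s\,\psi_{\overline{z_j^1}}$, and you recover the Hermitian form $\psi|t|^2+\psi_{\overline{z_j^1}}t\overline s+\psi_{z_j^1}\overline t s+\psi_{z_j^1\overline{z_j^1}}|s|^2\ge0$ as claimed. (In the paper's formulation the same phenomenon appears: both $\psi_{\overline z}$ and $(\varphi_j^{(2,0)})_{\overline z}$ contribute to the $|w_j|^2$-coefficient of $\bigl|uH_j{}^t\overline{v_\xi}\bigr|^2$, giving the stronger inequality $\psi\,\psi_{z\overline z}-|\psi_z|^2\ge 4\bigl|(\varphi_j^{(2,0)})_{\overline z}\bigr|^2$.)

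A second, smaller gap: you only work at points where $\psi(z_0)>0$, but $\psi$ may vanish on a nontrivial set. The paper handles this by proving that $\log(\psi+\varepsilon)$ is plurisubharmonic for every $\varepsilon>0$ (using Lemma~\ref{prop:11_psh_pre} to control the extra $\varepsilon$-term) and then letting $\varepsilon\downarrow 0$. You should either add this regularization or argue directly that an upper-semicontinuous function which is $-\infty$ on $\{\psi=0\}$ and $C^2$-plurisubharmonic on $\{\psi>0\}$ is globally plurisubharmonic.
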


\begin{proof}
For a positive real number $\varepsilon$, we consider the function 
$\psi_\varepsilon(z_j):=\log \left(\varphi_j^{(1, 1)}(z_j)+\varepsilon\right)$. 
As $\psi_\varepsilon$'s monotonically approximate the function $\log \varphi_j^{(1, 1)}(z_j)$ from above, it is sufficient to show that each $\psi_\varepsilon$ is plurisubharmonic. 

Let $\xi = (\xi_1, \xi_2, \dots, \xi_d)\in \mathbb{C}^d$ be a vector. 
As 
\begin{align*}
\partial \overline{\partial} \psi_\varepsilon
= &\frac{1}{(\varphi_j^{(1, 1)}+\varepsilon)^2}\sum_{\nu=1}^d\sum_{\mu=1}^d 
\left(\varphi_j^{(1, 1)}\cdot(\varphi_j^{(1, 1)})_{z_j^\nu\overline{z_j^\mu}}
-(\varphi_j^{(1, 1)})_{z_j^\nu}\cdot (\varphi_j^{(1, 1)})_{\overline{z_j^\mu}}\right)
dz_j^\nu\wedge d\overline{z_j^\mu}\\
&+\frac{\varepsilon}{(\varphi_j^{(1, 1)}+\varepsilon)^2}\sum_{\nu=1}^d\sum_{\mu=1}^d 
(\varphi_j^{(1, 1)})_{z_j^\nu\overline{z_j^\mu}}\,
dz_j^\nu\wedge d\overline{z_j^\mu},\\
\end{align*}
it follows from Lemma \ref{prop:11_psh_pre} that 
it is sufficient to show the inequality
\begin{equation}\label{ineq:wts1}
\sum_{\nu=1}^d\sum_{\mu=1}^d \xi_\nu\overline{\xi_\mu} \cdot
\left(
\varphi_j^{(1, 1)}\cdot(\varphi_j^{(1, 1)})_{z_j^\nu\overline{z_j^\mu}}
-(\varphi_j^{(1, 1)})_{z_j^\nu}\cdot (\varphi_j^{(1, 1)})_{\overline{z_j^\mu}}
\right)\geq 0. 
\end{equation}

We set $u:=(1, 0, 0, \dots, 0)\in \mathbb{C}^{d+1}$ and 
$v_\xi:=(0, \xi_1, \xi_2, \dots, \xi_d)\in \mathbb{C}^{d+1}$. 
As the quadratic form 
\[
\left\langle 
\left(
    \begin{array}{c}
      a \\
      b
    \end{array}
  \right)
,\ 
\left(
    \begin{array}{c}
      c\\
	d
    \end{array}
  \right)\right\rangle:=(au+bv_\xi)H_j{}^t\!\overline{(cu+dv_\xi)}
\]
is semi-positive definite, one have that 
\[
{\rm det}
\left(
    \begin{array}{cc}
      uH_j {}^t\!\overline{u} & u H_j {}^t\!\overline{v_\xi}\\
      v_\xi H_j {}^t\!\overline{u} & v_\xi H_j {}^t\!\overline{v_\xi}
    \end{array}
  \right)\geq 0. 
\]
As it holds that 
$uH_j {}^t\!\overline{u} = (H_j)^0_0 = \varphi_j^{(1, 1)} + O(|w_j|)$ 
and
\begin{align*}
uH_j {}^t\!\overline{v_\xi}
&= \sum_{\nu=1}^d (H_j)^0_\nu\overline{\xi_\nu} \\
&= \sum_{\nu=1}^d \overline{\xi_\nu}\cdot \left( 2(\varphi_j^{(2, 0)})_{\overline{z_j^\nu}}\cdot w_j + (\varphi_j^{(1, 1)})_{\overline{z_j^\nu}}\cdot \overline{w_j} \right) + O(|w_j|^2) \\
&= 2\left(\sum_{\nu=1}^d \overline{\xi_\nu}\cdot(\varphi_j^{(2, 0)})_{\overline{z_j^\nu}}\right)\cdot w_j + \left(\sum_{\nu=1}^d \overline{\xi_\nu}\cdot(\varphi_j^{(1, 1)})_{\overline{z_j^\nu}}\right)\cdot \overline{w_j} + O(|w_j|^2), 
\end{align*}
it follows from Lemma \ref{lem_positive_function} that 
\begin{align*}
&\varphi_j^{(1, 1)}\cdot \sum_{\nu=1}^d\sum_{\mu=1}^d \xi_\nu\overline{\xi_\mu} \cdot (\varphi_j^{(1, 1)})_{z_j^\nu\overline{z_j^\mu}}
-\left|\sum_{\nu=1}^d \overline{\xi_\nu}\cdot(\varphi_j^{(1, 1)})_{\overline{z_j^\nu}}\right|^2
- 4\cdot 
\left|\sum_{\nu=1}^d \overline{\xi_\nu}\cdot(\varphi_j^{(2, 0)})_{\overline{z_j^\nu}}\right|^2 \\
&= \sum_{\nu=1}^d\sum_{\mu=1}^d \xi_\nu\overline{\xi_\mu} \cdot
\left(
\varphi_j^{(1, 1)}\cdot(\varphi_j^{(1, 1)})_{z_j^\nu\overline{z_j^\mu}}
-(\varphi_j^{(1, 1)})_{z_j^\nu}\cdot (\varphi_j^{(1, 1)})_{\overline{z_j^\mu}}
\right)
- 4\cdot 
\left|\sum_{\nu=1}^d \overline{\xi_\nu}\cdot(\varphi_j^{(2, 0)})_{\overline{z_j^\nu}}\right|^2
\end{align*}
is non-negative, which proves the inequality (\ref{ineq:wts1}). 
\end{proof}

From Proposition \ref{prop:11_psh} and the last inequality of the  proof of it, 
one can easily deduce the following: 

\begin{theorem}\label{thm:2}
Let $X$ be a complex manifold and $Y$ be a non-singular compact hypersurface of $X$ which is K\"ahler. 
Let $L$ be a semi-positive line bundle on $X$ such that $L|_Y$ is topologically trivial. 
Assume that the conormal bundle $N_{Y/X}^{-1}$ is not pseudo-effective. 
Then there exists a system of local trivializations $\{(\widehat{V}_j, \widehat{e}_j, w_j)\}$ of type $3$. 
Especially, it holds that $u_1(Y, X, L)=u_2(Y, X, L)=0$. 
\end{theorem}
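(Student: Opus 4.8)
The plan is to bootstrap the second-order linearization from the plurisubharmonicity results already established. Starting from the normalization of \S\ref{section:3_3}, we may assume $\varphi_j(z_j,w_j)=O(|w_j|^2)$, so that its expansion is
\[
\varphi_j = \varphi_j^{(2,0)}w_j^2 + \varphi_j^{(1,1)}|w_j|^2 + \overline{\varphi_j^{(2,0)}}\,\overline{w_j}^2 + O(|w_j|^3),
\]
with $\varphi_j^{(1,1)}\geq 0$ plurisubharmonic (Lemma \ref{prop:11_psh_pre}). The key point is that $\varphi_j^{(1,1)}$ glues to a global function $\varphi^{(1,1)}$ on $Y$: indeed, writing out (\ref{eq:diff_phi}) at order $2$ for a system of type $2$ forces $\varphi_k^{(1,1)}=\varphi_j^{(1,1)}$ on overlaps, the $|w|^2$-term carrying no transition factor since $|t_{jk}|=1$ and $dw_j = t_{jk}\,dw_k$ gives $|w_j|^2=|w_k|^2$ to leading order. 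So $\varphi^{(1,1)}$ is a global nonnegative psh function on the compact $Y$, hence constant.

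First I would show this constant is $0$. If $\varphi^{(1,1)}\equiv c>0$, then Proposition \ref{prop:11_psh} (applied after noting $\varphi_j^{(1,1)}\not\equiv 0$) tells us $\log\varphi_j^{(1,1)}\equiv\log c$ is psh, which is automatic and gives nothing; instead I would use the \emph{last inequality in the proof of Proposition \ref{prop:11_psh}}: with $\varphi_j^{(1,1)}$ constant all its $z$-derivatives vanish, so that inequality reduces to
\[
-4\Bigl|\sum_{\nu=1}^d\overline{\xi_\nu}\,(\varphi_j^{(2,0)})_{\overline{z_j^\nu}}\Bigr|^2\geq 0,
\]
forcing $(\varphi_j^{(2,0)})_{\overline{z_j^\nu}}\equiv 0$ for all $\nu$, i.e. $\varphi_j^{(2,0)}$ is holomorphic on $U_j$. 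But then $\{(U_j,\varphi_j^{(2,0)}\,dw_j^2)\}$ would define a nonzero holomorphic section of $S^2 N_{Y/X}^*$ unless it vanishes — and here is where the hypothesis that $N_{Y/X}^{-1}$ is not pseudo-effective enters: $S^2 N_{Y/X}^* = (N_{Y/X}^{-1})^{\otimes 2}$ is then also not pseudo-effective, so $H^0(Y,S^2N_{Y/X}^*)=0$, whence $\varphi_j^{(2,0)}\equiv 0$. (The same non-pseudo-effectivity kills $H^0(Y,N_{Y/X}^*)=0$, which is needed to invoke well-definedness via Proposition \ref{prop:well-def_main}(iii) and to make the reductions below canonical.) Thus when $\varphi^{(1,1)}=c>0$ we would already have $\varphi_j = O(|w_j|^3)$; when $c=0$, Lemma \ref{lem_positive_function}(ii) applied to $\varphi_j^{(1,1)}$-is-zero case — more precisely to the nonnegative $(H_j)^0_0$ and then to the quadratic forms along $Y$ as in the proof of Lemma \ref{prop:11_psh_pre} — forces $\varphi_j^{(2,0)}\equiv 0$ as well, again giving $\varphi_j=O(|w_j|^3)$.

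Next I would convert the metric normalization into the desired linearization of frames. Since $\varphi_j = O(|w_j|^3)$, equation (\ref{eq:diff_phi}) with $n=2$ (valid because we may arrange $\{(V_j,e_j,w_j)\}$ to be of type $2$: $u_1(Y,X,L)=0$ by Theorem \ref{thm:1}) reads
\[
-f_{kj,(2)}(z_j)\,w_j^2 - \overline{f_{kj,(2)}(z_j)}\,\overline{w_j}^2 = O(|w_j|^3),
\]
forcing $f_{kj,(2)}\equiv 0$, i.e. the system is already of type $3$, so set $\widehat V_j=V_j$, $\widehat e_j = e_j$. This yields both $u_1(Y,X,L)=0$ and $u_2(Y,X,L)=0$ (well-defined by Proposition \ref{prop:well-def_main}(iii), since $H^0(Y,N_{Y/X}^*)=0$). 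The main obstacle is the step showing $\varphi_j^{(2,0)}$ is forced to vanish: one must carefully extract the right determinant inequality from positive semi-definiteness of $H_j$ and feed it through Lemma \ref{lem_positive_function} to isolate the coefficient $|\sum\overline{\xi_\nu}(\varphi_j^{(2,0)})_{\overline{z_j^\nu}}|^2$ cleanly — exactly the computation appearing at the end of the proof of Proposition \ref{prop:11_psh} — and then combine it with the constancy of $\varphi^{(1,1)}$ and the cohomological vanishing from non-pseudo-effectivity of $N_{Y/X}^{-1}$. Once $\varphi_j^{(2,0)}$ and the constant $\varphi^{(1,1)}$ are controlled, the rest is the bookkeeping of matching orders in the transition functions.
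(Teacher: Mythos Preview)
Your argument has a genuine gap at the gluing step for $\varphi_j^{(1,1)}$. You assert that $dw_j = t_{jk}\,dw_k$ and hence $|w_j|^2 = |w_k|^2$ to leading order, but the constants $t_{jk}\in\mathrm{U}(1)$ are the transition functions of $L|_Y$, not of the normal bundle. The defining functions $w_j$ are completely unrelated to the frames $e_j$ of $L$; their differentials $dw_j|_Y$ are local frames of $N_{Y/X}^*$, with transitions $dw_j = g_{jk}\,dw_k$ for the (in general non-unitary) cocycle $g_{jk}$ of $N_{Y/X}^*$. Under the hypothesis that $N_{Y/X}^{-1}$ is not pseudo-effective, $N_{Y/X}$ is certainly not unitary flat, so $|g_{jk}|\neq 1$ in general. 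Matching the $|w|^2$-terms in (\ref{eq:diff_phi}) actually gives $\varphi_k^{(1,1)} = |g_{jk}|^2\,\varphi_j^{(1,1)}$ on $U_{jk}$, so $\{\varphi_j^{(1,1)}\}$ does \emph{not} define a global function on $Y$, and your constancy argument collapses. (Your case $c>0$ is also internally inconsistent: even granting your vanishings, $\varphi_j = c|w_j|^2 + O(|w_j|^3)$ is not $O(|w_j|^3)$.)

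What the collection $\{\varphi_j^{(1,1)}\}$ \emph{does} define, when not identically zero, is a singular Hermitian metric on $N_{Y/X}$, and this is exactly where Proposition~\ref{prop:11_psh} is essential rather than incidental: the plurisubharmonicity of $\log\varphi_j^{(1,1)}$ says that the dual metric on $N_{Y/X}^{-1}$ has semi-positive curvature current, so $N_{Y/X}^{-1}$ would be pseudo-effective. The hypothesis rules this out, forcing $\varphi_j^{(1,1)}\equiv 0$ directly --- this is where non-pseudo-effectivity enters, not via $H^0(Y,S^2N_{Y/X}^*)=0$. With $\varphi_j^{(1,1)}\equiv 0$, the determinant inequality at the end of the proof of Proposition~\ref{prop:11_psh} yields holomorphicity of $\varphi_j^{(2,0)}$; but one does not conclude $\varphi_j^{(2,0)}\equiv 0$ (your claim that $\{\varphi_j^{(2,0)}\,dw_j^2\}$ glues to a global section of $S^2N_{Y/X}^*$ is again wrong, since the transition involves the unknown $f_{kj,2}$). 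Instead one absorbs it into the frame via $\widehat e_j := e_j\bigl(1+\varphi_j^{(2,0)}(z_j)\,w_j^2\bigr)$, and a direct computation shows $\{(V_j,\widehat e_j, w_j)\}$ is of type $3$.
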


Note that the second obstruction class $u_2(Y, X, L)$ is well-defined in this case, since $H^0(Y, N_{Y/X}^*)=0$ (Here we used Proposition \ref{prop:well-def_main} $(iii)$). 

\begin{proof}[Proof of Theorem {\ref{thm:2}}]
Assume that $\varphi_j^{(1, 1)}\not\equiv 0$. 
It follows from the equation (\ref{eq:diff_phi}) with $n=2$ that 
$\{(U_j, \varphi_j^{(1, 1)}dw_j\wedge d\overline{w}_j)\}$ 
glue up to define a global form on $Y$. 
By Proposition \ref{prop:11_psh}, one have that this form is non-zero outside of a pluripolar subset of $Y$. 
Thus, again by using Proposition \ref{prop:11_psh},  it turns out that one may regard the system $\{(U_j, \varphi_j^{(1, 1)})\}$ as a singular Hermitian metric on $N_{Y/X}$, say $h_N$, and that the curvature current $\sqrt{-1}\Theta_{h_N^{-1}}$ of the dual metric $h_N^{-1}$ on $N_{Y/X}^{-1}$ is semi-positive. 
Thus, as $N_{Y/X}^{-1}$ is not pseudo-effective, one have that $\varphi_j^{(1, 1)}\equiv 0$. 

If follows from the same argument as in the proof of Proposition \ref{prop:11_psh} that 
$(\varphi_j^{(2, 0)})_{\overline{z_j^\nu}}\equiv 0$ 
for any $\nu=1, 2, \dots, d$. 
Therefore we have that $\varphi^{(2, 0)}$ is a holomorphic function on $U_j$. 

Let $\widehat{e}_j$ be the section of $L$ on $V_j$ defined by 
\[
\widehat{e}_j=e_j\cdot \left(1+\varphi_j^{(2, 0)}(z_j)\cdot w_j^2\right)
\]
for each $j$ after shrinking $V_j$ if necessary (so that $\widehat{e}_j\not=0$ at any point of $V_j$). 
Then, by a simple computation, one have that the system $\{(V_j, \widehat{e}_j, w_j)\}$ is of type $3$. 
\end{proof}

Let $\widehat{e}_j$ be as in the proof of Theorem \ref{thm:2}. 
Denote by $\widehat{\varphi}_j$ the corresponding local weight function of $h$. 
Then one have that
\begin{align*}
\widehat{\varphi}_j(z_j, w_j) 
&=-\log |\widehat{e}_j|_{h(z_j, w_j)}^2 \\
&=-\log |e_j|_{h(z_j, w_j)}^2 - \log \left|1+\varphi_j^{(2, 0)}(z_j)\cdot w_j^2\right|^2 \\
&= \varphi_j(z_j, w_j) - \log \left|1+\varphi_j^{(2, 0)}(z_j)\cdot w_j^2\right|^2\\
&= O(|w_j|^3) 
\end{align*}
as $|w_j|\to 0$. 
Thus we have that, by leaving $w_j$'s as it was and changing only $e_j$'s, 
one may assume that $\varphi_j(z_j, w_j) = O(|w_j|^3)$ holds as $|w_j|\to 0$ for each $j$ when $r=1$ and $N_{Y/X}^{-1}$ is not pseudo-effective. 
The similar argument also runs when $r=1$, $N_{Y/X}$ is topologically trivial, and $L=[Y]$, which will be generalized in the following subsection. 


\subsection{Linearization in the higher order jets when $Y$ is a hypersurface with topologically trivial normal bundle}\label{section:3_5}
Fix a positive integer $n$. 
In this section, we assume that $Y$ is a hypersurface of $X$ (i.e. $r=1$) and that $N_{Y/X}$ is a unitary flat line bundle. 
In what follows, we let $L$ be the line bundle $[Y]$. 

On each $V_j$, we use the local frame $e_j$ of $L$ which corresponds to the meromorphic function $1/w_j$ via the isomorphism $\mathcal{O}_X([Y])\cong \mathcal{O}_X(Y)$, where $w_j$ is a local holomorphic defining function of $Y$ in $V_j$. 
In this case, as is explained in \S \ref{section:2_1}, $\{(V_j, w_j\cdot e_j)\}$ patches to each other to define the canonical section $f_Y\in H^0(V, [Y])$. 
In what follows, whenever we change a system of the local frames $e_j$'s, we will also change a system of local defining functions $w_j$'s 
so that the canonical section $f_Y$ itself never changes. 
Let $h$ be a $C^\infty$ Hermitian metric on $[Y]$ with semi-positive curvature and $\varphi_j$ be the local weight function with respect to the local frame $e_j$. 

Consider the set $Z(Y, X, h; f_Y)$ of all positive integers $n$ which satisfies the following condition: 
There exists a system $\{(V_j, e_j, w_j)\}$ of type $n$ with $f_Y=w_j\cdot e_j$ such that the corresponding local weight function $\varphi_j$ satisfies $\varphi_j(z_j, w_j)=O(|w_j|^n)$ as $|w_j|\to 0$ for each $j$. 
It clearly follows from the arguments in \S \ref{section:3_3} and Theorem \ref{thm:1} that 
$1, 2 \in Z(Y, X, h; f_Y)$. 

Take an element $n\in Z(Y, X, h; f_Y)$ and a system $\{(V_j, e_j, w_j)\}$ of type $n$ as above. 
Let 
\[
\varphi_j(z_j, w_j) = 
\sum_{p, q\geq 0, p+q=n}\varphi_j^{(p, q)}(z_j)\cdot w_j^p\overline{w_j}^q 
+R_j^{(n+1)}(z_j, w_j) 
\]
be the expression obtained by considering Taylor expansion of $\varphi_j$, 
where $R_j^{(n+1)}(z_j, w_j)$ is a smooth function with $R_j^{(n+1)}(z_j, w_j) = O(|w_j|^{n+1})$ as $|w_j|\to 0$. 
Note that, by equation (\ref{eq:diff_phi}), one have that 
$\{(U_j, \varphi_j^{(p, q)}(z_j)\,dw_j^p\otimes d\overline{w_j}^q)\}$ patch to define a global section of $N_{Y/X}^{-p+q}$ for each pair $(p, q)$ with $p, q>0$ and $p+q=n$, 
and that $t_{jk}^{-n}\cdot \varphi_k^{(n, 0)}(z_k) - \varphi_j^{(n, 0)}(z_j) = -f_{kj, n}(z_j)$ on each $U_{jk}$, where we are letting
\[
\frac{t_{jk}^{-1}e_k}{e_j} = 1+ f_{kj, n}(z_j)\cdot w_j^n + O(w_j^{n+1})
\]
be the expansion. 

\begin{lemma}\label{lem:higher_inductive}
For  $(p, q) = (n-1, 1), (n-2, 2), \dots, (2, n-2), (1, n-1)$, 
there exists a constant $A_j^{(p, q)}\in\mathbb{C}$ such that $\varphi_j^{(p, q)}\equiv A_j^{(p, q)}$ holds on each $U_j$. 
Moreover, it hold that $A_j^{(p, q)}$ is non-negative if $p=q$, and that $A_j^{(p, q)}=0$ if the line bundle $N_{Y/X}^{-p+q}$ is not analytically trivial. 
\end{lemma}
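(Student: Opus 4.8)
The plan is to read off all the claims from the positive semi-definiteness of the complex Hessian $H_j$ of $\varphi_j$ (the $(d+1)\times(d+1)$ matrix of \S\ref{section:3_4}; here $r=1$) together with Lemma~\ref{lem_positive_function}, treating the cases $n$ odd and $n$ even separately. For a fixed point $z_j^0\in U_j$ I would restrict everything to the transverse slice $\{z_j=z_j^0\}$, view the relevant quantities as functions of $w_j$ near the origin, compute their homogeneous Taylor parts, and apply Lemma~\ref{lem_positive_function}; all these quantities are real-valued and non-negative because $H_j$ is Hermitian positive semi-definite and $\varphi_j^{(q,p)}=\overline{\varphi_j^{(p,q)}}$.

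First I would examine the diagonal entry $(H_j)^0_0=\partial^2\varphi_j/\partial w_j\partial\overline{w_j}\ge0$, whose expansion along the slice is $\sum_{p+q=n,\,p,q\ge1}pq\,\varphi_j^{(p,q)}(z_j^0)\,w_j^{p-1}\overline{w_j}^{q-1}+O(|w_j|^{n-1})$, with leading part homogeneous of degree $n-2$. If $n$ is odd, this degree is odd and Lemma~\ref{lem_positive_function}~$(i)$ gives $\varphi_j^{(p,q)}(z_j^0)=0$ for all $p,q\ge1$ with $p+q=n$; as $z_j^0$ is arbitrary these $\varphi_j^{(p,q)}$ vanish identically and the lemma holds with $A_j^{(p,q)}=0$. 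If $n$ is even, Lemma~\ref{lem_positive_function}~$(ii)$ gives $\varphi_j^{(n/2,n/2)}\ge0$ on $U_j$ and, at every point where $\varphi_j^{(n/2,n/2)}$ vanishes, $\varphi_j^{(p,q)}=0$ for all $p,q\ge1$. Applying the same scheme to the non-negative quantity $\sum_{a,b\ge1}\xi_a(H_j)^a_b\overline{\xi_b}=\sum_{p+q=n}Q_j^{(p,q)}(\xi)\,w_j^p\overline{w_j}^q+O(|w_j|^{n+1})$, where $Q_j^{(p,q)}(\xi):=\sum_{a,b}\xi_a(\varphi_j^{(p,q)})_{z_j^a\overline{z_j^b}}\overline{\xi_b}$, Lemma~\ref{lem_positive_function}~$(ii)$ gives $Q_j^{(n/2,n/2)}(\xi)\ge0$, i.e. $\varphi_j^{(n/2,n/2)}$ is plurisubharmonic; since it patches to a global section of $N_{Y/X}^{0}=\mathcal{O}_Y$, that is, to a global plurisubharmonic function on the compact $Y$, it is constant on each $U_j$, equal to some $A_j^{(n/2,n/2)}\ge0$. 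This already proves the assertions for $p=q$.

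It remains to handle $\varphi_j^{(p,q)}$ with $p\ne q$ when $n$ is even. If $A_j^{(n/2,n/2)}=0$, then $\varphi_j^{(n/2,n/2)}\equiv0$ and the pointwise statement above forces $\varphi_j^{(p,q)}\equiv0$ for all $p,q\ge1$, so $A_j^{(p,q)}=0$. If $A_j^{(n/2,n/2)}>0$, then $Q_j^{(n/2,n/2)}(\xi)\equiv0$, so the second part of Lemma~\ref{lem_positive_function}~$(ii)$ applied to the quadratic form above forces $Q_j^{(p,q)}(\xi)\equiv0$ for all $p,q$ and $\xi$, whence $v_\xi H_j{}^t\overline{v_\xi}=O(|w_j|^{n+1})$. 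Now I would bring in the $2\times2$ principal minor of $H_j$ along $u=(1,0,\dots,0)$ and $v_\xi=(0,\xi_1,\dots,\xi_d)$, which is $\ge0$: since $v_\xi H_j{}^t\overline{v_\xi}=O(|w_j|^{n+1})$ and $(H_j)^0_0=O(|w_j|^{n-2})$, the product $(uH_j{}^t\overline{u})(v_\xi H_j{}^t\overline{v_\xi})$ is $O(|w_j|^{2n-1})$, whereas $uH_j{}^t\overline{v_\xi}=\sum_{p+q=n,\,p\ge1}p\,L_j^{(p,q)}(\xi)\,w_j^{p-1}\overline{w_j}^q+O(|w_j|^{n})$ with $L_j^{(p,q)}(\xi):=\sum_a\overline{\xi_a}(\varphi_j^{(p,q)})_{\overline{z_j^a}}$; hence the minor equals $-|uH_j{}^t\overline{v_\xi}|^2+O(|w_j|^{2n-1})\ge0$, and the coefficient of $w_j^{n-1}\overline{w_j}^{n-1}$ in its homogeneous degree-$(2n-2)$ part, namely $-\sum_{p\ge1}p^2|L_j^{(p,n-p)}(\xi)|^2$, is $\ge0$ by Lemma~\ref{lem_positive_function}~$(ii)$. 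Therefore $L_j^{(p,n-p)}(\xi)=0$ for all $p\ge1$ and all $\xi$, i.e. each $\varphi_j^{(p,q)}$ ($p,q\ge1$) is holomorphic on $U_j$; since it then defines a holomorphic section of the unitary flat line bundle $N_{Y/X}^{-p+q}$ over the compact K\"ahler manifold $Y$ (the frame $dw_j^p\otimes d\overline{w_j}^q$ has constant unitary transition functions, as the system is of type $\ge1$), it is parallel by \cite[Lemma~2.1, Remark~2.7]{K2018}, so $\varphi_j^{(p,q)}\equiv A_j^{(p,q)}$ is constant; and since a unitary flat line bundle carrying a nonzero holomorphic section is analytically trivial, $A_j^{(p,q)}=0$ whenever $N_{Y/X}^{-p+q}$ is not analytically trivial.

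The step I expect to be the main obstacle is the case $n$ even with $A_j^{(n/2,n/2)}>0$: there the diagonal entries of $H_j$ no longer suffice, and one must keep careful track of the orders in $|w_j|$ contributed by each factor of the $2\times2$ minor, so that the surviving homogeneous piece is exactly of degree $2n-2$ and Lemma~\ref{lem_positive_function} applies to it. Secondary points requiring care are the reality and non-negativity bookkeeping that makes Lemma~\ref{lem_positive_function} applicable to $(H_j)^0_0$, to $\sum_{a,b}\xi_a(H_j)^a_b\overline{\xi_b}$, and to the $2\times2$ minor, and the identification of $dw_j^p\otimes d\overline{w_j}^q$ as a holomorphic frame of $N_{Y/X}^{-p+q}$ needed to close the final flat-bundle argument.
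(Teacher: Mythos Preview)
Your proof is correct, but it takes a somewhat different route from the paper's. The paper works uniformly in $n$ using only the $d\times d$ block $M_j=((H_j)^a_b)_{a,b\ge1}$: from $\sum_{a,b}\xi_a(H_j)^a_b\overline{\xi_b}\ge0$ and Lemma~\ref{lem_positive_function} it obtains $Q_j^{(p,q)}(\xi)\equiv0$ for all $(p,q)$ with $p,q\ge1$, i.e.\ each $\varphi_j^{(p,q)}$ is \emph{pluriharmonic}; it then concludes constancy directly from the fact that a pluriharmonic section of a unitary flat line bundle on a compact manifold is constant (the ``repeat the same argument'' step, which amounts to observing that $|\varphi_j^{(p,q)}|^2$ is a global plurisubharmonic function). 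Non-negativity of $A_j^{(n/2,n/2)}$ is read off separately from $(H_j)^0_0$.

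Your argument instead exploits $(H_j)^0_0$ much more aggressively: for $n$ odd and for $n$ even with $A_j^{(n/2,n/2)}=0$ you settle everything pointwise in one stroke, which is cleaner than the paper. In the remaining case $A_j^{(n/2,n/2)}>0$ you bring in the $2\times2$ minor along $u,v_\xi$ to upgrade pluriharmonicity to \emph{holomorphicity}, after which the constancy follows from the standard result \cite[Lemma~2.1]{K2018} on holomorphic sections of unitary flat bundles. This extra minor computation is essentially the argument the paper reserves for the next lemma (Lemma~\ref{lem:hol_higher}); what you gain is that the final constancy step is entirely routine, whereas the paper's pluriharmonic-section step is a bit terse. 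Both approaches are valid; yours is more self-contained, the paper's is shorter and avoids the order bookkeeping in the $2\times2$ minor.
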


\begin{proof}
Let $M_j:=((M_j)^a_b)_{a, b}$ be the $d \times d$-matrix with entries
\[
(M_j)^\nu_\mu
= \frac{\partial^2 \varphi_j}{\partial z_j^\nu \partial \overline{z_j^\mu}} (z_j) 
= \sum_{p, q\geq 0, p+q=n}(\varphi_j^{(p, q)})_{z_j^\nu\overline{z_j^\mu}}(z_j)\cdot w_j^p\overline{w_j}^q 
+O(|w_j|^{n+1})
\]
at a point $z_j\in U_j$. 
As the curvature of $h$ is semi-positive, one have that $M_j$ is also positive semi-definite. 
Take an element $\xi=(\xi_1, \xi_2, \dots, \xi_d)\in \mathbb{C}^d$.
Then it easily follows from the semi-positivity that the sum
\[
\sum_{p, q\geq 0, p+q=n} 
\left(\sum_{\nu=1}^d \sum_{\mu=1}^d\xi_\nu\overline{\xi_\mu}\cdot (\varphi_j^{(p, q)})_{z_j^\nu\overline{z_j^\mu}}\right)\cdot w_j^p\overline{w_j}^q 
\] 
is non-negative. 

Assume that $n$ is even. Set $n=2m$. 
By Lemma \ref{lem_positive_function}, one have that 
\[
\sum_{\nu=1}^d \sum_{\mu=1}^d\xi_\nu\overline{\xi_\mu}\cdot (\varphi_j^{(m, m)})_{z_j^\nu\overline{z_j^\mu}} \geq 0. 
\]
As $\xi$ is arbitrary chosen, one have that the function $\varphi_j^{(m, m)}$ is plurisubharmonic. 
As $\varphi_j^{(m, m)}$ is a global function on a compact complex manifold $Y$, one have that it is constant. 
Therefore one obtain that $(\varphi_j^{(m, m)})_{z_j^\nu\overline{z_j^\mu}} \equiv 0$. 
Thus, again by Lemma \ref{lem_positive_function}, one have that 
\[
\sum_{\nu=1}^d \sum_{\mu=1}^d\xi_\nu\overline{\xi_\mu}\cdot (\varphi_j^{(p, q)})_{z_j^\nu\overline{z_j^\mu}} \equiv 0
\] 
holds for any element $\xi=(\xi_1, \xi_2, \dots, \xi_d)\in \mathbb{C}^d$, and any pair $(p, q)$ with $p, q\geq 0$ and $p+q=n$ in both the cases where $n$ is even/odd. 
As one can regard $\{(U_j, \varphi_j^{(p, q)})\}$'s as pluriharmonic sections of unitary flat line bundles if $(p, q)\not=(n, 0), (0, n)$, one can repeat the same argument as we have done for $(p, q)=(m, m)$ to obtain that each $\varphi_j^{(p, q)}$ is constant. 
When $n=2m$, the non-negativity of $\varphi_j^{(m, m)}$ follows by applying Lemma \ref{lem_positive_function} to the function $(\varphi_j)_{w_j\overline{w_j}}$. 
When there exists $j$ such that $\varphi_j^{(p, q)}\not\equiv 0$, we have that $\varphi_k^{(p, q)}\not\equiv 0$ for any $k$, which means that $N_{Y/X}^{-p+q}$ has a global nowhere vanishing section. 
\end{proof}

By using the constants as in Lemma \ref{lem:higher_inductive}, one can rewrite the expansion as 
\[
\varphi_j(z_j, w_j) = 
\sum_{p, q> 0, p+q = n}A_j^{(p, q)}\cdot w_j^p\overline{w_j}^q 
+ \varphi_j^{(n, 0)}(z_j)\cdot w_j^n
+ \varphi_j^{(0, n)}(z_j)\cdot \overline{w_j}^n 
+R_j^{(n+1)}(z_j, w_j). 
\]
Note that $\varphi_j^{(0, n)} = \overline{\varphi_j^{(n, 0)}}$ holds, since $\varphi_j$ is real valued. 

\begin{lemma}\label{lem:hol_higher}
The function $\varphi_j^{(n, 0)}$ is holomorphic. 
\end{lemma}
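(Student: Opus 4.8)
The plan is to mimic the proof of Lemma~\ref{lem:1} from \S\ref{section:3_3}, but now applied one order higher: we want to show that $\varphi_j^{(n,0)}$ is annihilated by each $\partial/\partial\overline{z_j^\nu}$, which by holomorphy of the remaining structure will give holomorphy of $\varphi_j^{(n,0)}$ on $U_j$. The starting point is that, after the reductions already carried out, the Taylor expansion of $\varphi_j$ along $Y$ has been normalized so that the only terms of total order $n$ that can depend on $z_j$ are the pure terms $\varphi_j^{(n,0)}(z_j)\,w_j^n$ and its conjugate, while the mixed terms of order $n$ are constants $A_j^{(p,q)}$ (Lemma~\ref{lem:higher_inductive}).

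First I would fix a point $z_j\in U_j$ and an index $\nu\in\{1,\dots,d\}$, and consider the $2\times 2$ principal minor of the complex Hessian $H_j$ of $\varphi_j$ (at $(z_j,w_j)$ with $w_j$ near $0$) corresponding to the coordinates $w_j$ and $z_j^\nu$, exactly as in the proof of Lemma~\ref{lem:1} and of Proposition~\ref{prop:11_psh}. Semi-positivity of the curvature forces this minor to have non-negative determinant. I would then expand each entry of this $2\times 2$ matrix in powers of $w_j,\overline{w_j}$: the diagonal entry $(\varphi_j)_{w_j\overline{w_j}}$ is $O(|w_j|^{n-2})$ with leading mixed terms coming from the $A_j^{(p,q)}$ constants (hence $z_j$-independent), the entry $(\varphi_j)_{z_j^\nu\overline{z_j^\nu}}$ is $O(|w_j|^n)$ with leading coefficient $(\varphi_j^{(n,0)})_{z_j^\nu\overline{z_j^\nu}}w_j^n+(\varphi_j^{(0,n)})_{z_j^\nu\overline{z_j^\nu}}\overline{w_j}^n$ (the mixed constants contribute nothing to the $z_j$-derivatives), and the off-diagonal entry $(\varphi_j)_{w_j\overline{z_j^\nu}}$ has leading term of order $n-1$ equal to $n\,(\varphi_j^{(n,0)})_{\overline{z_j^\nu}}\cdot w_j^{n-1}$ plus lower-order-in-$z_j$-derivative pieces. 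Feeding the determinant inequality $ (\varphi_j)_{w_j\overline{w_j}}\cdot(\varphi_j)_{z_j^\nu\overline{z_j^\nu}} - |(\varphi_j)_{w_j\overline{z_j^\nu}}|^2 \ge 0$ into Lemma~\ref{lem_positive_function}, after tracking the total order (which will be $2n-2$, an even number, so the $(i)$/$(ii)$ dichotomy applies), the term $-\,n^2\,|(\varphi_j^{(n,0)})_{\overline{z_j^\nu}}|^2\,|w_j|^{2n-2}$ appears and must be compensated by a non-negative contribution; but the compensating contribution from $(\varphi_j)_{w_j\overline{w_j}}\cdot(\varphi_j)_{z_j^\nu\overline{z_j^\nu}}$ involves $\partial\overline\partial$ of $\varphi_j^{(n,0)}$, not $|\partial\varphi_j^{(n,0)}|^2$, so the only way to keep things non-negative is $(\varphi_j^{(n,0)})_{\overline{z_j^\nu}}\equiv 0$.

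The main obstacle, as in all these arguments, is bookkeeping the orders of vanishing correctly and making sure that the ``bad'' cross term $|(\varphi_j^{(n,0)})_{\overline{z_j^\nu}}|^2$ genuinely sits at the lowest order in the determinant expansion, i.e. that nothing of order strictly less than $2n-2$ in the product $(\varphi_j)_{w_j\overline{w_j}}\cdot(\varphi_j)_{z_j^\nu\overline{z_j^\nu}}$ can cancel or dominate it. Here the normalization $\varphi_j=O(|w_j|^n)$ is crucial: it guarantees $(\varphi_j)_{z_j^\nu\overline{z_j^\nu}}=O(|w_j|^n)$ and $(\varphi_j)_{w_j\overline{z_j^\nu}}=O(|w_j|^{n-1})$, so that the product $(\varphi_j)_{w_j\overline{w_j}}\cdot(\varphi_j)_{z_j^\nu\overline{z_j^\nu}}$ starts only at order $(n-2)+n=2n-2$, matching $|(\varphi_j)_{w_j\overline{z_j^\nu}}|^2=O(|w_j|^{2n-2})$ exactly. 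Once the vanishing $(\varphi_j^{(n,0)})_{\overline{z_j^\nu}}\equiv 0$ is obtained for every $\nu$, and using that $\varphi_j^{(n,0)}$ is already a function of $z_j$ alone (it is a Taylor coefficient), we conclude that $\varphi_j^{(n,0)}$ is holomorphic on $U_j$, which is the assertion. I would also remark that for the even case $n=2m$ the constancy and non-negativity of $\varphi_j^{(m,m)}$ from Lemma~\ref{lem:higher_inductive} is exactly what allows the $O(|w_j|^{n-2})$ expansion of $(\varphi_j)_{w_j\overline{w_j}}$ to have $z_j$-independent leading coefficients, so no extra care is needed there.
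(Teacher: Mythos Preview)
Your approach is essentially the paper's: form the $2\times 2$ minor of the complex Hessian in the $(w_j,z_j^\nu)$ directions (the paper uses a general tangential vector $v_\xi=(0,\xi_1,\dots,\xi_d)$, which specializes to your case when $\xi$ is a coordinate vector), observe that its determinant is non-negative and of order $O(|w_j|^{2n-2})$, and apply Lemma~\ref{lem_positive_function} at the even order $2n-2$.

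One step is imprecisely argued. You say the product $(\varphi_j)_{w_j\overline{w_j}}\cdot(\varphi_j)_{z_j^\nu\overline{z_j^\nu}}$ contributes to the $|w_j|^{2n-2}$ coefficient something ``involving $\partial\overline\partial$ of $\varphi_j^{(n,0)}$'' and conclude vanishing from that alone; but the mere fact that this hypothetical contribution has a different algebraic shape than $|\partial\varphi_j^{(n,0)}|^2$ does not force $(\varphi_j^{(n,0)})_{\overline{z_j^\nu}}=0$---a term $C\cdot(\varphi_j^{(n,0)})_{z_j^\nu\overline{z_j^\nu}}$ could a priori be large and positive. The correct and simpler observation (which the paper records without comment) is that the product contributes \emph{nothing} to the coefficient of $w_j^{n-1}\overline{w_j}^{n-1}$: its order-$(2n-2)$ part consists only of monomials $w_j^{p-1+n}\overline{w_j}^{q-1}$ and $w_j^{p-1}\overline{w_j}^{q-1+n}$ with $p,q\geq 1$, $p+q=n$, none of which equals $w_j^{n-1}\overline{w_j}^{n-1}$. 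Hence the middle coefficient in Lemma~\ref{lem_positive_function}(ii) is exactly $-n^2\bigl|(\varphi_j^{(n,0)})_{\overline{z_j^\nu}}\bigr|^2\leq 0$, which must therefore vanish. With this correction your proof is complete and matches the paper's.
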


\begin{proof}
Let $H_j$ be the $(d+1)\times (d+1)$ matrix with entries $(H_j)^a_b$ with $0\leq a, b\leq d$ defined by 
\[
(H_j)^a_b = \begin{cases}
 \frac{\partial^2 \varphi_j}{\partial w_j \partial \overline{w_j}} & \text{if}\ a=b=0 \\
 \frac{\partial^2 \varphi_j}{\partial w_j \partial \overline{z_j^b}} & \text{if}\ a=0, b>0 \\
 \frac{\partial^2 \varphi_j}{\partial z_j^a \partial \overline{w_j}} & \text{if}\ a>0, b=0 \\
 \frac{\partial^2 \varphi_j}{\partial z_j^a \partial \overline{z_j^b}} & \text{if}\ a, b>0
\end{cases}
\]
i.e. $H_j$ is the complex Hessian of $\varphi_j$. 
By the assumption that the curvature of $h$ is semi-positive, we have that $H_j$ is semi-positive definite. 
By a simple computation, one have that 
\[
(H_j)^a_b = \begin{cases}
\sum_{p, q> 0, p+q = n}pqA_j^{(p, q)}\cdot w_j^{p-1}\overline{w_j}^{q-1}  + O(|w_j|^{n-1}) & \text{if}\ a=b=0 \\
n(\varphi_j^{(n, 0)})_{\overline{z_j^b}}\cdot w_j^{n-1} + O(|w_j|^n)  & \text{if}\ a=0, b>0 \\
n(\varphi_j^{(0, n)})_{z_j^a}\cdot \overline{w_j}^{n-1} + O(|w_j|^n)  & \text{if}\ a>0, b=0 \\
(\varphi_j^{(n, 0)})_{z_j^a\overline{z_j^b}}\cdot w_j^n
+ (\varphi_j^{(0, n)})_{z_j^a\overline{z_j^b}}\cdot \overline{w_j}^n 
+O(|w_j|^{n+1})  & \text{if}\ a, b>0.
\end{cases}
\]

We set $u:=(1, 0, 0, \dots, 0)\in \mathbb{C}^{d+1}$ and 
$v_\xi:=(0, \xi_1, \xi_2, \dots, \xi_d)\in \mathbb{C}^{d+1}$. 
As the quadratic form 
\[
\left\langle 
\left(
    \begin{array}{c}
      a \\
      b
    \end{array}
  \right)
,\ 
\left(
    \begin{array}{c}
      c\\
	d
    \end{array}
  \right)\right\rangle:=(au+bv_\xi)H_j{}^t\!\overline{(cu+dv_\xi)}
\]
is semi-positive definite, one have that 
\[
{\rm det}
\left(
    \begin{array}{cc}
      uH_j {}^t\!\overline{u} & u H_j {}^t\!\overline{v_\xi}\\
      v_\xi H_j {}^t\!\overline{u} & v_\xi H_j {}^t\!\overline{v_\xi}
    \end{array}
  \right)\geq 0. 
\]
As it hold that 
\[
{\rm det}
\left(
    \begin{array}{cc}
      uH_j {}^t\!\overline{u} & u H_j {}^t\!\overline{v_\xi}\\
      v_\xi H_j {}^t\!\overline{u} & v_\xi H_j {}^t\!\overline{v_\xi}
    \end{array}
  \right) = O(|w_j|^{2n-2})
\]
and the coefficient of $|w_j|^{2n-2}$ in Taylor expansion of the left hand side is 
\[
-n^2\left|\sum_{\nu=1}^d\overline{\xi_\nu}(\varphi_j^{(n, 0)})_{\overline{z_j^\nu}}\right|^2, 
\]
the assertion follows from Lemma \ref{lem_positive_function}. 
\end{proof}

By using these lemmata, one have the following: 

\begin{proposition}\label{prop:higher_main}
Let $n$ be an element of $Z(Y, X, h; f_Y)$ and $\{(V_j, e_j, w_j)\}$ be a system of type $n$ with $f_Y=w_j\cdot e_j$ such that the corresponding local weight function $\varphi_j$ satisfies $\varphi_j(z_j, w_j)=O(|w_j|^n)$ as $|w_j|\to 0$ for each $j$. Then the following holds: \\
$(i)$ $u_n(Y, X)=u_n(Y, X, L)=0$. \\
$(ii)$ When $n$ is odd, it holds that $n+1\in Z(Y, X, h; f_Y)$. \\
$(iii)$ When $n$ is even and $A_j^{(m, m)}\equiv 0$ for $m:=n/2$, it holds that $n+1\in Z(Y, X, h; f_Y)$. 
\end{proposition}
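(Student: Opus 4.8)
The plan is to run the same linearization argument as in \S\ref{section:3_3} and \S\ref{section:3_4}, one extra order at a time, and to package the bookkeeping via the auxiliary set $Z(Y, X, h; f_Y)$. First I would prove part $(i)$. Since $\{(V_j, e_j, w_j)\}$ is a system of type $n$ and $\varphi_j = O(|w_j|^n)$, the expansion recalled just before Lemma \ref{lem:higher_inductive} together with equation (\ref{eq:diff_phi}) gives, on each $U_{jk}$,
\[
t_{jk}^{-n}\varphi_k^{(n,0)}(z_k) - \varphi_j^{(n,0)}(z_j) = -f_{kj, n}(z_j).
\]
By Lemma \ref{lem:hol_higher} each $\varphi_j^{(n,0)}$ is holomorphic on $U_j$, so the left-hand side exhibits $\{(U_{jk},\ f_{kj,n}\, dw_j^n)\}$ as a coboundary in $\check{H}^1(\{U_j\}, \mathcal{O}_Y(N_{Y/X}^{-n}))$; hence $u_n(Y, X, L) = 0$. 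Because $L = [Y]$ and $f_Y = w_j\cdot e_j$ is preserved, the Remark identifying $u_n(Y,X,L)$ with the Ueda class $u_n(Y,X)$ (up to a $\mathbb{C}^*$-scalar) then gives $u_n(Y,X) = 0$ as well.

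Next I would prove $(ii)$ and $(iii)$ simultaneously. Under the stated hypothesis — $n$ odd, or $n = 2m$ even with $A_j^{(m,m)} \equiv 0$ — Lemma \ref{lem:higher_inductive} forces \emph{all} the middle coefficients $A_j^{(p,q)}$ with $p,q>0$, $p+q=n$ to vanish: when $n$ is odd there is no $(m,m)$ term and the lemma kills every $A_j^{(p,q)}$; when $n=2m$ the hypothesis kills the single potentially-surviving term $A_j^{(m,m)}$, and the lemma's last sentence then propagates this to all the others. Therefore the expansion reduces to
\[
\varphi_j(z_j, w_j) = \varphi_j^{(n,0)}(z_j)\cdot w_j^n + \overline{\varphi_j^{(n,0)}(z_j)}\cdot \overline{w_j}^n + R_j^{(n+1)}(z_j, w_j),
\]
with $\varphi_j^{(n,0)}$ holomorphic by Lemma \ref{lem:hol_higher}. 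Now I mimic the gauge change from the proofs of Theorems \ref{thm:1} and \ref{thm:2}: define $\widehat{e}_j := e_j\cdot (1 + \varphi_j^{(n,0)}(z_j)\cdot w_j^n)$ after shrinking $V_j$ so that $\widehat{e}_j$ is nowhere vanishing, and correspondingly replace $w_j$ by $\widehat{w}_j$ so that $f_Y = \widehat{w}_j\cdot \widehat{e}_j$ still holds (this is possible since $1 + \varphi_j^{(n,0)} w_j^n$ is a unit near $Y$). A direct computation of the new weight $\widehat{\varphi}_j = \varphi_j - \log|1 + \varphi_j^{(n,0)} w_j^n|^2$ shows the $w_j^n$ and $\overline{w_j}^n$ terms cancel, so $\widehat{\varphi}_j = O(|w_j|^{n+1})$; moreover the cancellation of $f_{kj,n}$ (which is $u_n(Y,X) = 0$ from part $(i)$) makes the new system of type $n+1$. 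Hence $n+1 \in Z(Y, X, h; f_Y)$.

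I expect the main obstacle to be the compatibility bookkeeping in the gauge change of $(ii)$–$(iii)$: one must simultaneously adjust $e_j$ and $w_j$ so that the canonical section $f_Y$ is untouched (as stipulated in the set-up of \S\ref{section:3_5}), verify that $\widehat{w}_j = S_{jk}\,\widehat{w}_k$-type transition relations are still of type $n+1$ rather than merely type $n$, and check that the new frames remain genuinely holomorphic frames of $[Y]$ — all of which rest on Lemma \ref{lem:hol_higher} (holomorphy of $\varphi_j^{(n,0)}$) and on the vanishing $u_n(Y,X) = 0$ just established. The analytic input (semi-positivity $\Rightarrow$ constancy/pluriharmonicity of the lower coefficients, holomorphy of the top coefficient) is entirely supplied by Lemmata \ref{lem_positive_function}, \ref{lem:higher_inductive}, and \ref{lem:hol_higher}, so the remaining work is the formal but delicate order-counting argument above.
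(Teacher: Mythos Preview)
Your argument for part $(i)$ is correct and matches the paper's: the paper packages the coboundary $\{(U_j,\ \varphi_j^{(n,0)}\,dw_j^n)\}$ into the gauge change $\widehat{e}_j := e_j(1 + \varphi_j^{(n,0)}w_j^n)$, $\widehat{w}_j := w_j(1 + \varphi_j^{(n,0)}w_j^n)^{-1}$, and observes that the new system is of type $n+1$, which is exactly your coboundary computation read in a different direction.

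There is, however, a genuine gap in your argument for $(ii)$ and $(iii)$. Lemma~\ref{lem:higher_inductive} does \emph{not} force the vanishing of all the $A_j^{(p,q)}$ with $p,q>0$ and $p+q=n$: it only says that each $A_j^{(p,q)}$ is a constant, and that it vanishes \emph{provided} $N_{Y/X}^{-p+q}$ is not analytically trivial. Nothing in Proposition~\ref{prop:higher_main} excludes $N_{Y/X}$ from being torsion, so several of the coefficients $A_j^{(p,q)}$ with $p\neq q$ may survive; nor does the lemma's last sentence ``propagate'' the hypothesis $A_j^{(m,m)}=0$ to the remaining coefficients. As written, your reasoning would only be valid under the extra (and unstated) assumption that $N_{Y/X}$ is non-torsion.

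The fix, which is what the paper does, is to apply Lemma~\ref{lem_positive_function} directly to the function $w_j\mapsto (\varphi_j)_{w_j\overline{w_j}}(z_j,w_j)$. This function is non-negative because $\varphi_j$ is plurisubharmonic, and its lowest-order homogeneous part is
\[
\sum_{p,q>0,\ p+q=n} pq\,A_j^{(p,q)}\,w_j^{p-1}\overline{w_j}^{\,q-1},
\]
of total degree $n-2$. When $n$ is odd, $n-2$ is odd and part $(i)$ of Lemma~\ref{lem_positive_function} kills every coefficient $pq\,A_j^{(p,q)}$, hence every $A_j^{(p,q)}$; when $n=2m$, part $(ii)$ gives that once the central coefficient $m^2 A_j^{(m,m)}$ vanishes, all the others do as well. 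With this correction in place, the remainder of your argument --- the gauge change removing the $(n,0)$ and $(0,n)$ terms and producing a system of type $n+1$ with $\widehat{\varphi}_j = O(|w_j|^{n+1})$ --- goes through as you wrote it.
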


\begin{proof}
Let $\widehat{e}_j$ be the section of $L$ on $V_j$ defined by 
\[
\widehat{e}_j=e_j\cdot \left(1+\varphi_j^{(n, 0)}(z_j)\cdot w_j^n\right)
\]
for each $j$ after shrinking $V_j$ if necessary (so that $\widehat{e}_j\not=0$ at any point of $V_j$). 
Denote by $\widehat{w}_j$ the corresponding defining function of $U_j$ on $V_j$: i.e. 
\[
\widehat{w}_j = w_j\cdot \left(1+\varphi_j^{(n, 0)}(z_j)\cdot w_j^n\right)^{-1}. 
\]
Then, by a simple calculation and Lemma \ref{lem:hol_higher}, one have that the system $\{(V_j, w_j)\}$ is of type $n+1$, which shows the assertion $(i)$. 
In what follows we assume that $\{(V_j, w_j)\}$ is of type $n+1$ by replacing $w_j$ with $\widehat{w}_j$. 

When $n$ is odd or when $n=2m$ is even and $A_j^{(m, m)}\equiv 0$, 
it follows by applying Lemma \ref{lem_positive_function} to the function $(\varphi_j)_{w_j\overline{w_j}}$ that $A^{(p, q)}\equiv 0$ for any $(p, q)$ with $p+q=n$ and $p, q> 0$, from which the assertions $(ii)$ and $(iii)$ hold. 
\end{proof}

\begin{proof}[Proof of Theorem {\ref{thm:flat_u2}}]
Theorem follows from Theorem \ref{thm:1}, 
Proposition \ref{prop:higher_main} $(i)$, 
and the fact that $2\in Z(Y, X, h; f_Y)$. 
\end{proof}

Note that the set $Z(Y, X, h; f_Y)$ need not to coincide with $\mathbb{Z}_{>0}$ even if the pair $(Y, X)$ is of infinite type. 
For example, let $X$ be a surface and $Y\subset X$ be a holomorphically embedded non-singular compact curve with topologically trivial normal bundle such that the pair $(Y, X)$ is of type $(\beta')$ or $(\beta'')$ in the classification of \cite[\S 5]{U}; 
i.e. there exists an open covering $\{U_j\}$ of $Y$ and a local defining function $w_j$ of $Y$ on a neighborhood $V_j$ of $U_j$ such that $t_{jk}w_k=w_j$ holds on each $V_{jk}$ ($t_{jk}\in\mathrm{U}(1)$). 
Consider a $C^\infty$ Hermitian metric $h$ of $[Y]$ whose local weight functions $\varphi_j$ on each $V_j$ with respect to the local frame $e_j$ which corresponds to the meromorphic function $1/w_j$ satisfies 
$\varphi_j = |w_j|^2$. 
In this case, $3\not\in Z(Y, X, h; f_Y)$ whereas the pair $(Y, X)$ is of infinite type. 



\section{Applications}\label{section:4}

\subsection{Proof of Theorem {\ref{thm:main_1}}}\label{section:4_1}

For proving Theorem \ref{thm:main_1}, let us first explain our variant of Grauert's example (see also \cite[Problem 2.2]{FT}). 

\begin{example}\label{ex:nefbig}
Let $C$ be a Riemann surface of genus $2$. 
Let $\{U_j\}$ be a finite Stein cover of $C$ and $z_j$ be the coordinate of $U_j$ which comes from the standard coordinate of a connected component of the inverse image of $U_i$ by the universal covering $\mathbb{H}\to C$ ($\mathbb{H}:=\{z\in \mathbb{C}\mid {\rm Im}\,z>0\}$). 
Define a transition function $g_{jk}\colon U_{jk}\to \mathbb{C}^*$ of $K_C$ by 
$dz_j=g_{jk}(z_k)\cdot dz_k$. 
Set 
\[
\rho_j(z_j):=\frac{\sqrt{-1}}{z_j-\overline{z_j}}=\frac{1}{2\cdot {\rm Im}\,z_j}
\]
on each $U_i$ and
\[
\xi_{jk}:=\rho_jdz_j-\rho_kdz_k = (g_{jk}\cdot\rho_j - \rho_k)\cdot dz_k
\]
on each $U_{jk}$. 
As 
\[
\overline{\partial} (\rho_j\cdot dz_j) =
\frac{\sqrt{-1}dz_j\wedge d\overline{z_j}}{4({\rm Im} z_j)^2}
\]
holds on each $U_j$ and 
the $(1, 1)$-forms as the right hand side glue up to define a non-trivial element of $H^1(C, K_C)$, 
one have that each $\xi_{jk}$ is $\overline{\partial}$-closed and that 
\[
\xi:=[\{(U_{jk}, \xi_{jk})\}]\in \check{H}^1(\{U_j\}, K_C)
\]
is a non-trivial element (it follows by considering the \v{C}ech-Dolbeault correspondence). 

We let $X$ be the ruled surface which is the natural compactification of the affine bundle $\widetilde{X}:=\bigcup_jK_{U_j}/\sim$ by considering the infinity secion, 
where ``$\sim$" is the relation generated by the following: 
$(x_j, v_jdz_j)\sim (x_k, v_kdz_k)$ holds if and only if $x_j=x_k\in U_{jk}$ and $v_jdz_j=v_kdz_k+\xi_{jk}$, or equivalently, 
$g_{jk}\cdot v_j = v_k + (g_{jk}\cdot\rho_j - \rho_k)$. 
Denote by $Y\subset X$ the infinity section. 
We have that $N_{Y/X}\cong K_{C}^{-1}$ and $(Y^2)=-2$. 
Denote by $\pi\colon X\to C$ the projection. 
We let $L$ be a line bundle defined by $L:=[Y]\otimes \pi^*K_C$. 
Note that $(L^2)=2$ and $(L. Y)=0$. 
Note also that it is easily observed that $L$ is nef and big. 

Regard the fiber coordinate function $v_j$ as a local frame of the line bundle $[Y]$, 
and $dz_j$ as of $K_C$. 
Then naturally one can regard $e_j:= v_j \otimes \pi^*dz_j$ as a local frame of $L$. 
In what follows, we use $w_j:=v_j^{-1}$ as a local defining function of $Y$ and $(z_j, w_j)$ as local coordinates on a neighborhood of $\pi^{-1}(U_j)\cap Y$. 
Note that 
\[
\frac{e_k}{e_j} = \frac{\pi^*dz_k}{\pi^*dz_j}\cdot \frac{v_k}{v_j}
= g_{jk}^{-1}\cdot \frac{g_{jk}v_j-(g_{jk}\cdot\rho_j - \rho_k)}{v_j}
= 1-(\rho_j - g_{jk}^{-1}\cdot \rho_k)\cdot w_j
\]
holds. 
Therefore one have that the first obstruction class 
$u_1(Y, X, L) \in H^1(Y, N_{Y/X}^{-1})$ coincides with the class $\xi\in H^1(Y, K_C)$ via the isomorphism between $N_{Y/X}^{-1}$ and $K_C$ induced by $dw_j\mapsto dz_j$. 
Thus it follows that $u_1(Y, X, L)\not=0$. 
\qed
\end{example}

\begin{proof}[Proof of Theorem {\ref{thm:main_1}}]
We show that the triple $(Y, X, L)$ described in Example \ref{ex:nefbig} satisfies that $L$ is not semi-positive. 
Assume that $L$ is semi-positive. 
Then, by Theorem \ref{thm:1}, one have that $u_1(Y, X, L)=0$, which contradicts to the fact mentioned in Example \ref{ex:nefbig}. 
\end{proof}

\begin{remark}\label{rmk:begz}
On higher dimensional manifolds, the existence of nef, big and non semi-positive line bundle have already been shown by \cite{D} and \cite{BEGZ}. 
Let us investigate \cite{BEGZ}'s version of such an example (\cite[Example 5.4]{BEGZ}) here. 
In this example, the line bundle $L$ is on the total space of the projective plane bundle $p\colon X\to C$ over an elliptic surface $C$ which corresponds to a vector bundle $E\oplus A^{-1}$, where $E$ is the non-trivial extension of the trivial line bundle by the trivial line bundle on $C$, and $A$ is an ample line bundle. 
The line bundle $L$ is the relative $\mathcal{O}(1)$-bundle (see also \cite[Example 4.2]{K2}). 
Let $Y$ be the image of the section of $p$ which corresponds to the trivial subbundle of rank $1$ included in $E$. Then one can easily calculate that the first obstruction class $u_1(Y, X, L)$ coincides with $(\xi, 0)\in H^1(C, \mathcal{O}_C)\oplus H^1(C, \mathcal{O}_C(A^{-1}))$ via the isomorphism $p|_Y\colon Y\cong C$, where $\xi$ is the extension class of $0\to \mathcal{O}_C\to \mathcal{O}_C(E)\to \mathcal{O}_C\to 0$. 
\end{remark}


\subsection{Proof of Theorem {\ref{thm:main_2}}}\label{section:4_2}

Here we show Theorem \ref{thm:main_2}. 
As $d=1$ under the configuration of this theorem, we denote $z_j^1$ simply by $z_j$. 

According to \cite[Theorem 1.1]{K3}, it follows from the semi-positivity of $[Y]$ that the pair $(Y, X)$ is of infinite type. 
When $N_{Y/X}$ is a torsion element of the Picard variety, it follows from \cite[Theorem 3]{U} that there exists a neighborhood $V$ of $Y$ in $X$ and a proper surjective holomorphic map $\pi\colon V\to \Delta$ with $Y=\pi^{-1}(0)$, where $\Delta$ is the unit disc of $\mathbb{C}$ ($Y$ may be a multiple fiber). 
In this case, the assertion $(ii)$ holds by regarding this fibration as a foliation and by considering the maximal principle on each fiber (=leaf). 

Therefore, the problem is reduced to the case where $N_{Y/X}$ is non-torsion in the Picard variety. 
As is mentioned in \S \ref{section:3_5}, it follows from the arguments in \S \ref{section:3_3} that 
$2 \in Z(Y, X, h; f_Y)$. 
Take a system $\{(V_j, e_j, w_j)\}$ of type $2$ such that the corresponding local weight function $\varphi_j$ satisfies $\varphi_j(z_j, w_j)=O(|w_j|^2)$ as $|w_j|\to 0$ for each $j$. 
Consider the set $S$ of all positive integers $m$ such that 
\[
\left.\frac{\partial^{2m} \varphi_j}{\partial w_j^m\partial\overline{w_j}^m}\right|_{U_j}\not\equiv 0. 
\]

First, let us consider the case where $S=\emptyset$. 
In this case, by Lemma \ref{lem:for_thm_main_2} below, one have that there exists a pluriharmonic function $\eta_j^{(n)}$ for any positive integer $n$ such that $\varphi_j=\eta_j^{(n)}+O(|w_j|^n)$ as $|w_j|\to 0$. 
Therefore, when $S=\emptyset$, the assertion $(iii)$ holds if $\Theta_h\wedge \Theta_h\equiv 0$. When there exists a point of $X$ at which $\Theta_h\wedge \Theta_h\not= 0$, 
it follows from \cite[Proposition 2]{B} that the assertion $(i)$ holds (see Remark \ref{rmk:1conv_V-Y} below for details). 

Next, consider the case where $S\not=\emptyset$. 
Denote by $m$ the minimum of $S$. 
Take a holomorphic function $f_j^{(m-1)}$ as in Lemma \ref{lem:for_thm_main_2}. 
Then the expansion of $\varphi_j$ is in the form
\begin{align*}
\varphi_j(z_j, w_j) = &f_j^{(m-1)}(z_j, w_j) +\overline{f_j^{(m-1)}(z_j, w_j)}\\
&+\sum_{p, q\geq 0, p+q= 2m}\varphi_j^{(p, q)}\cdot w_j^p\overline{w_j}^q
 + O(|w_j|^{2m+1}). 
\end{align*}
Define a new system $\{(V_j, \widehat{e}_j, \widehat{w}_j)\}$ by $\widehat{e}_j:=e_j\cdot \exp(f_j^{(m-1)})$ and $\widehat{w}_j:=w_j\cdot \exp(-f_j^{(m-1)})$. 
Then one have that the corresponding (new) local weight function $\widehat{\varphi}_j$ satisfies 
\begin{align*}
\widehat{\varphi}_j &= \varphi_j -\log\left|\exp (f_j^{(m-1)}) \right|^2 \\
&= \sum_{p, q\geq 0, p+q= 2m}\varphi_j^{(p, q)}\cdot w_j^p\overline{w_j}^q + O(|w_j|^{2m+1}) \\
&= \sum_{p, q\geq 0, p+q= 2m}\varphi_j^{(p, q)}\cdot \widehat{w}_j^p\overline{\widehat{w}_j}^q + O(|\widehat{w}_j|^{2m+1}). 
\end{align*}
Therefore, by replacing our system $\{(V_j, e_j, w_j)\}$ with $\{(V_j, \widehat{e}_j, \widehat{w}_j)\}$, one may assume that
$\{(V_j, w_j)\}$ is a system of type $2m-1$ and that the expansion of the local weight function $\varphi_j$ with respect to the local frame $e_j$ is in the form 
\[
\varphi_j(z_j, w_j) = \sum_{p, q> 0, p+q= 2m}A_j^{(p, q)}\cdot w_j^p\overline{w_j}^q
 + \varphi_j^{(2m, 0)}(z_j)\cdot w_j^{2m}
 + \varphi_j^{(2m, 0)}(z_j)\cdot \overline{w_j}^{2m}
 + O(|w_j|^{2m+1}). 
\]
By Lemma \ref{lem:higher_inductive} and the assumption that $N_{Y/X}$ is non-torsion, 
one have that $A_j^{(p, q)}\equiv 0$ if $p\not=q$. 
Again by Lemma \ref{lem:higher_inductive} and our definition of $m$, one have that there exists a positive constant $A$ such that $A_j^{(m, m)}\equiv A$. 
Therefore, by Lemma \ref{lem:hol_higher} and an argument in the proof of Proposition \ref{prop:higher_main} $(i)$, we can rewrite the above expansion into the form
\[
\varphi_j(z_j, w_j) = A\cdot |w_j|^{2m} + O(|w_j|^{2m+1}). 
\]
by changing the system $\{(V_j, w_j)\}$ again if necessary. 
Thus one can calculate the complex Hessian as follows: 
\[
\left(
    \begin{array}{cc}
      (\varphi_j)_{w_j \overline{w_j}} & (\varphi_j)_{w_j \overline{z_j}} \\
      (\varphi_j)_{z_j \overline{w_j}} & (\varphi_j)_{z_j \overline{z_j}}
    \end{array}
  \right)
=\left(
    \begin{array}{cc}
      m^2A|w_j|^{2m-2} + O(|w_j|^{2m-1}) & O(|w_j|^{2m}) \\
      O(|w_j|^{2m}) & O(|w_j|^{2m+1})
    \end{array}
  \right). 
\]
From this calculation, one have that there exists a neighborhood $V$ of $Y$ such that the curvature $\Theta_h$ has at least one positive eigenvalue at each point of $V\setminus Y$. 

In the case where there exists a point in $V\setminus Y$ at which $\Theta_h$ has two positive eigenvalues, it follows from \cite[Proposition 2]{B} that the assertion $(i)$ holds (see Remark \ref{rmk:1conv_V-Y} for details). 
Thus, in what follows, we may assume that the rank of the complex Hessian of each $\varphi_j$ is one at each point of $V\setminus Y$. 
Then it follows from \cite{S} and \cite[Theorem 2.4]{BK} that there exists a non-singular holomorphic (Monge--Amp\`ere) foliation $\mathcal{G}$ on $V\setminus Y$. 
Let $G\subset T_{V\setminus Y}$ be the corresponding subbundle: $G:=T_\mathcal{G}$. 
Denote by $s\colon V\setminus Y \to {\bf P}(T_{V\setminus Y})$ the section whose image coincides with ${\bf P}(G)$, where we denote by ${\bf P}$ the relative projectivization. 

For each point $p\in V_j\cap (V\setminus Y)$, take complex numbers $a_j(p)$ and $b_j(p)$ such that  
\[
s(p) = \left[ a_j(p)\cdot \frac{\partial}{\partial w_j} + b_j(p)\cdot \frac{\partial}{\partial z_j}\right]. 
\]
Note that the ratio $a_j/b_j$ defines a meromorphic function on $V_j\cap (V\setminus Y)$. 
By the definition of Monge--Amp\`ere foliation, one have that 
\[
\left(
    \begin{array}{cc}
      (\varphi_j)_{w_j \overline{w_j}}(p) & (\varphi_j)_{w_j \overline{z_j}}(p) \\
      (\varphi_j)_{z_j \overline{w_j}}(p) & (\varphi_j)_{z_j \overline{z_j}}(p)
    \end{array}
  \right)
\left(
    \begin{array}{c}
      a_j(p) \\
	b_j(p)
    \end{array}
  \right)
=\left(
    \begin{array}{c}
      0 \\
      0
    \end{array}
  \right). 
\]
holds at each point of $V_j\cap (V\setminus Y)$. 
Therefore, by a simple argument, one have the estimate 
$a_j(p)/b_j(p) = O(|w_j(p)|)$ as $p$ approaches to $U_j$. 
From Riemann's extension theorem and this estimate, it follows that the section $\widetilde{s}\colon V\to {\bf P}(T_V)$ defined by 
\[
\widetilde{s}(p):=\begin{cases}
\left[\frac{\partial}{\partial z_j}\right] & \text{if}\ p\in Y\\
s(p) & \text{if}\ p\in V\setminus Y
\end{cases}
\]
is also holomorphic. 

Denote by $F\subset T_V$ the subbundle which corresponds to the image of $\widetilde{s}$. 
This subbundle $F$ is clearly integrable, since the integrability condition is trivial for a rank-one subbundle of the tangent bundle of surfaces, from which the theorem follows. 
\qed

\begin{lemma}\label{lem:for_thm_main_2}
Let $X, Y, L$, and $h$ be as in Theorem \ref{thm:main_2}. 
Assume that $N_{Y/X}$ is non-torsion in the Picard variety of $Y$. 
Take a system $\{(V_j, e_j, w_j)\}$ of type $2$ such that the corresponding local weight function $\varphi_j$ satisfies $\varphi_j(z_j, w_j)=O(|w_j|^2)$ as $|w_j|\to 0$ for each $j$, 
and an integer $m\geq 0$ which is less than any element of $S$, where $S$ is the set as in the proof of Theorem \ref{thm:main_2}. 
Then we have 
\[
\varphi_j(z_j, w_j) = f_j^{(m)}(z_j, w_j) +\overline{f_j^{(m)}(z_j, w_j)} + O(|w_j^{2m+2}|), 
\]
where $f_j^{(m)}$ is a holomorphic function on $V_j$ in the form 
\[
f_j^{(m)}(z_j, w_j) = \sum_{\nu=1}^{2m+1} \varphi_j^{(\nu, 0)}(z_j)\cdot w_j^\nu. 
\]
Here $\varphi_j^{(\nu, 0)}$ is a holomorphic function on $U_j$ for each $\nu$ with $2\leq \nu\leq 2m$. 
\end{lemma}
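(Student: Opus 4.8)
The plan is to argue by induction on $m$. The base case $m=0$ is exactly the content of \S\ref{section:3_3}: after Lemma \ref{lem:1} and the normalization $\varphi^{(0)}\equiv 0$, one has $\varphi_j = \varphi_j^{(1,0)}(z_j)w_j + \overline{\varphi_j^{(1,0)}(z_j)}\,\overline{w_j} + O(|w_j|^2)$ with $\varphi_j^{(1,0)}$ holomorphic, which is precisely $f_j^{(0)}+\overline{f_j^{(0)}}+O(|w_j|^2)$. For the inductive step, suppose the statement holds for $m-1$, so after possibly changing the system we may write $\varphi_j = f_j^{(m-1)}+\overline{f_j^{(m-1)}}+O(|w_j|^{2m})$ with $f_j^{(m-1)}=\sum_{\nu=1}^{2m-1}\varphi_j^{(\nu,0)}w_j^\nu$ holomorphic. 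As in the proof of Proposition \ref{prop:higher_main}, the substitution $\widehat{e}_j := e_j\cdot\exp(f_j^{(m-1)})$, $\widehat{w}_j := w_j\cdot\exp(-f_j^{(m-1)})$ kills the holomorphic (and antiholomorphic) part, so that the new weight function has the form $\varphi_j = \sum_{p+q=2m}\varphi_j^{(p,q)}w_j^p\overline{w_j}^q + O(|w_j|^{2m+1})$; here I am renaming $\widehat{\varphi}_j,\widehat{w}_j$ back to $\varphi_j,w_j$.

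Now I want to show that in this degree-$2m$ homogeneous piece, the only possibly-nonzero coefficient is $\varphi_j^{(2m,0)}$ and its conjugate $\varphi_j^{(0,2m)}$, i.e. all mixed terms $\varphi_j^{(p,q)}$ with $p,q>0$ vanish. This is where the hypothesis ``$m$ is less than every element of $S$'' enters: by definition of $S$, $(\varphi_j)_{w_j^m\overline{w_j}^m}|_{U_j}\equiv 0$, which is (up to a positive combinatorial factor) exactly $\varphi_j^{(m,m)}\equiv 0$ when $2m$ is even — and $2m$ is always even, so the middle coefficient vanishes outright. To kill the remaining off-diagonal terms I would reproduce the argument of Lemma \ref{lem:higher_inductive}: semi-positivity of the complex Hessian of $\varphi_j$ forces, via Lemma \ref{lem_positive_function} applied to $\sum_{\nu,\mu}\xi_\nu\overline{\xi_\mu}(\varphi_j)_{z_j^\nu\overline{z_j^\mu}}$, that each $\varphi_j^{(p,q)}$ with $p+q=2m$ is a plurisubharmonic (in fact pluriharmonic, once we know the middle one is constant) section of $N_{Y/X}^{-p+q}$; since $N_{Y/X}$ is non-torsion, such a section vanishes unless $p=q$; and $p=q$ means $p=q=m$, already excluded. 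Finally, Lemma \ref{lem:hol_higher} (again its proof applies verbatim, using the $|w_j|^{2(2m)-2}$-coefficient of the relevant $2\times2$ Hessian determinant) gives that $\varphi_j^{(2m,0)}$ is holomorphic. Setting $f_j^{(m)} := f_j^{(m-1)} + \varphi_j^{(2m,0)}w_j^{2m}$ (reverting the change of variables so that all expansions are expressed in the original system), we obtain $\varphi_j = f_j^{(m)} + \overline{f_j^{(m)}} + O(|w_j|^{2m+2})$, and by construction $\varphi_j^{(\nu,0)}$ is holomorphic for $2\le\nu\le 2m$, which completes the induction.

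The main obstacle is bookkeeping rather than any new idea: one must be careful that after the exponential change of frame the claim is really about the \emph{original} $w_j$, $e_j$, so the final $f_j^{(m)}$ has to be re-expanded in the original coordinates (which only changes it by $O(|w_j|^{2m+1})$ terms absorbed into the remainder), and one must check that the intermediate degree-$2m$ coefficients $\varphi_j^{(p,q)}$ that appear after the change of variables indeed patch to global sections of the correct power $N_{Y/X}^{-p+q}$ so that the non-torsion hypothesis applies — this is exactly the content of equation \eqref{eq:diff_phi} combined with the type-$(2m-1)$ property we have arranged for the system $\{(V_j,w_j)\}$. Everything else is a direct transcription of the arguments already carried out in Lemmata \ref{lem:higher_inductive} and \ref{lem:hol_higher} and Proposition \ref{prop:higher_main}.
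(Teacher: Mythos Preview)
Your inductive scheme has a genuine gap: each step must advance the remainder from $O(|w_j|^{2m})$ to $O(|w_j|^{2m+2})$, and produce $f_j^{(m)}=\sum_{\nu=1}^{2m+1}\varphi_j^{(\nu,0)}w_j^\nu$, so you must treat \emph{two} homogeneous degrees, $2m$ and $2m+1$. You only handle degree $2m$: your definition $f_j^{(m)}:=f_j^{(m-1)}+\varphi_j^{(2m,0)}w_j^{2m}$ is missing the $\varphi_j^{(2m+1,0)}w_j^{2m+1}$ term, you never argue that the mixed coefficients $\varphi_j^{(p,q)}$ with $p+q=2m+1$ vanish, and you never show $\varphi_j^{(2m+1,0)}$ is holomorphic. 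From what you have actually established, the remainder is only $O(|w_j|^{2m+1})$, not $O(|w_j|^{2m+2})$. The fix is easy (repeat your argument once more for degree $2m+1$; since $2m+1$ is odd there is no diagonal term and non-torsion kills every mixed coefficient), but as written the induction does not close.

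Your route to killing the mixed terms also differs from the paper's and is more elaborate than necessary. You change frame and defining function, then invoke the Lemma~\ref{lem:higher_inductive} mechanism: gluing of $\varphi_j^{(p,q)}$ to global sections of $N_{Y/X}^{-p+q}$, pluriharmonicity, and the non-torsion hypothesis. The paper instead works directly in the original system and applies Lemma~\ref{lem_positive_function} to the single nonnegative function $(H_j)^0_0=(\varphi_j)_{w_j\overline{w_j}}$. By the inductive hypothesis its leading homogeneous part has degree $2m-2$ with middle coefficient $m^2\varphi_j^{(m,m)}$, which vanishes because $m\notin S$; Lemma~\ref{lem_positive_function}(ii) then annihilates all degree-$(2m-2)$ coefficients, and Lemma~\ref{lem_positive_function}(i) annihilates the next (odd) degree. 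This is purely local, requires no change of variables, and never uses the non-torsion assumption. The holomorphicity of $\varphi_j^{(2m,0)}$ and $\varphi_j^{(2m+1,0)}$ is then obtained, as you do, from the nonnegativity of $\det H_j$.
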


\begin{proof}
The assertion clearly holds for $m=0$. 
Assume the assertion for an integer $\mu$ with $0\leq \mu <m$ as the inductive assumption. 
Assume also that $m$ is less than any element of $S$. 
Then, it follows by applying Lemma \ref{lem_positive_function} for $(H_j)^0_0$ that 
the expansion of $\varphi_j$ is in the form 
\begin{align*}
\varphi_j = &f_j^{(m-1)}(z_j, w_j) +\overline{f_j^{(m-1)}(z_j, w_j)} 
+ \varphi_j^{(2m, 0)}(z_j)\cdot w_j^{2m}
+\overline{\varphi_j^{(2m, 0)}(z_j)}\cdot \overline{w_j}^{2m}\\
&+ \varphi_j^{(2m+1, 0)}(z_j)\cdot w_j^{2m+1} 
+ \overline{\varphi_j^{(2m+1, 0)}(z_j)}\cdot \overline{w_j}^{2m+1} + O(|w_j|^{2m+2}), 
\end{align*}
where $H_j$ is the $2\times 2$ matrix with entries $(H_j)^a_b$ with $0\leq a, b\leq 1$ defined by 
\[
(H_j)^a_b = \begin{cases}
 \frac{\partial^2 \varphi_j}{\partial w_j \partial \overline{w_j}} & \text{if}\ a=b=0 \\
 \frac{\partial^2 \varphi_j}{\partial w_j \partial \overline{z_j}} & \text{if}\ a=0, b=1 \\
 \frac{\partial^2 \varphi_j}{\partial z_j \partial \overline{w_j}} & \text{if}\ a=1, b=0 \\
 \frac{\partial^2 \varphi_j}{\partial z_j \partial \overline{z_j}} & \text{if}\ a=b=1
\end{cases}
\]
i.e. $H_j$ is the complex Hessian of $\varphi_j$. 
As it holds that ${\rm det}\,H_j=O(|w_j|^{4m-2})$ and the coefficient of $|w_j|^{4m-2}$ of the expansion of this function is $-4m^2|(\varphi_j^{(2m, 0)})_{\overline{z_j}}|^2$, 
It follows again from Lemma \ref{lem_positive_function} that $\varphi_j^{(2m, 0)}$ is holomorphic. 
By a similar argument, one also have that $\varphi_j^{(2m+1, 0)}$ is holomorphic. 
\end{proof}

\begin{remark}\label{rmk:1conv_V-Y}
In order to conclude that $X\setminus Y$ is holomorphically convex by using \cite[Proposition 2]{B}, as $X\setminus Y$ is a connected K\"ahler surface with trivial canonical bundle, 
one need to show the existence of a $C^\infty$ plurisubharmonic exhaustion function $f\colon X\setminus Y\to \mathbb{R}$ which is strictly plurisubharmonic at some point. 
To show this, we use the existence of a point $p\in X\setminus Y$ at which the curvature $\Theta_h$ has two positive eigenvalues. 
Let $\psi$ be a $C^\infty$ plurisubharmonic exhaustion function of $X$ (we need to assume that $X$ is weakly $1$-complete in order to assure the existence of this). 
Then the function $f:=\psi-\log |f_Y|_h^2$ enjoys the condition above, 
where $f_Y$ is the canonical section of $[Y]$. 
\end{remark}

\subsection{Proof of Theorem {\ref{thm:main_linearization}}}

In this subsection, we prove the following: 

\begin{theorem}\label{thm:linearizability_genaral}
Let $Y$ be a compact non-singular hypersurface of a complex manifold $X$ 
and $L$ be a holomorphic line bundle on $X$. 
Assume that the restriction $L|_Y$ of $L$ to $Y$ is unitary flat, 
the line bundle $K_Y^{-1}\otimes N_{Y/X}^{-1}$ is semi-positive, 
and that, for any neighborhood $V$ of $Y$ in $X$, there exists a $1$-convex neighborhood of $Y$ in $V$ whose maximal compact analytic set is $Y$. 
Then the restriction of $L$ to a neighborhood is unitary flat if $u_1(Y, X, L)=0$. 
\end{theorem}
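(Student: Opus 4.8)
The plan is to imitate Ueda's linearization procedure: inductively linearize the transition functions of $L$ order by order along $Y$, concluding that $L$ is trivial on every infinitesimal neighborhood of $Y$, and then use the $1$-convexity hypothesis to upgrade this formal triviality to genuine triviality on an honest neighborhood. First I would reduce to the case where $L|_Y$ is \emph{holomorphically} trivial: replacing $L$ by $L\otimes\widetilde L^{-1}$, where $\widetilde L$ is the flat extension of $L|_Y$, changes neither $u_1(Y,X,L)$ (the flat factor $\widetilde L^{-1}$ contributes nothing to the first-order part of the transition functions) nor any of the hypotheses on $N_{Y/X}$ and on neighborhoods of $Y$; moreover, by the injectivity of $H^1(V,\mathrm U(1))\to H^1(Y,\mathrm U(1))$ (see \S\ref{section:2_1}), ``$L$ is unitary flat near $Y$'' is equivalent to ``$L\otimes\widetilde L^{-1}$ is holomorphically trivial near $Y$''. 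So from now on $L|_Y$ is trivial and the goal is to trivialize $L$ on a neighborhood of $Y$.

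Next comes the cohomological input. Since a neighborhood of $Y$ can be chosen $1$-convex with maximal compact analytic subset exactly $Y$, the submanifold $Y$ is the exceptional set of a Remmert contraction to a finite set of normal points; hence, by Grauert's contractibility criterion for a smooth exceptional divisor, the conormal bundle $N_{Y/X}^{*}=N_{Y/X}^{-1}$ is ample on $Y$. Together with the assumed semi-positivity (hence nefness) of $K_Y^{-1}\otimes N_{Y/X}^{-1}$, this gives, for every $n\geq 2$,
\[
N_{Y/X}^{-n}=K_Y\otimes\bigl((K_Y^{-1}\otimes N_{Y/X}^{-1})\otimes N_{Y/X}^{-(n-1)}\bigr),
\]
where the second factor is (nef)$\,\otimes\,$(ample), hence ample; Kodaira vanishing then yields $H^1(Y,S^{n}N_{Y/X}^{*})=H^1(Y,N_{Y/X}^{-n})=0$ for all $n\geq 2$. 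Note that for $n=1$ the factor $N_{Y/X}^{-1}=K_Y\otimes(K_Y^{-1}\otimes N_{Y/X}^{-1})$ is only $K_Y$ twisted by a nef line bundle, so $H^1(Y,N_{Y/X}^{-1})$ need not vanish --- this is precisely why $u_1(Y,X,L)=0$ has to be assumed rather than deduced.

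Now the induction. Because $L|_Y$ is trivial there is a system $\{(V_j,e_j,w_j)\}$ of type $1$, and since $u_1(Y,X,L)=0$ the coboundary computation of \S\ref{section:3_2} produces, after modifying the $e_j$'s, a system of type $2$. Given a system of type $n$ with $n\geq 2$, its obstruction class $u_n(Y,X,L;\{(V_j,e_j,w_j)\})\in H^1(Y,N_{Y/X}^{-n})$ vanishes by the paragraph above, hence is a coboundary, and the usual modification $e_j\mapsto e_j(1+A_j(z_j)w_j^{\,n})$ of the local frames produces a system of type $n+1$; here one only uses vanishing of the obstruction class \emph{of the given system}, so the well-definedness subtleties of \S\ref{section:3_2} are irrelevant. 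Thus a type-$n$ system exists for every $n$; reducing such a system modulo the $n$-th power of the ideal sheaf of $Y$, and absorbing the resulting $\mathrm U(1)$-valued transition constants using triviality of $L|_Y$, shows that $L|_{nY}$ is holomorphically trivial for every $n$. Hence $L$ is formally trivial along $Y$, i.e. trivial in $\varprojlim_n\mathrm{Pic}(nY)$.

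Finally, I would pass from formal triviality to genuine triviality on a fixed neighborhood. Shrinking $X$ to a $1$-convex neighborhood $V$ of $Y$ whose maximal compact analytic subset is $Y$, the theorem on formal functions together with the finite-dimensionality of $H^1(V,\mathcal O_V)$ (Grauert) shows that $\mathrm{Pic}(V)\to\varprojlim_n\mathrm{Pic}(nY)$ is injective; since $L$ is trivial in the inverse limit, $L|_V$ is trivial after possibly shrinking $V$. (Alternatively, one can make the Ueda iteration quantitative: solve the successive Cousin problems with $L^{2}$-estimates, available since $V$ is $1$-convex, and exploit the geometric decay in $|w_j|$ of the error terms to see that the corrected frames converge on a neighborhood of $Y$ that does not shrink to $Y$.) Undoing the reduction of the first paragraph, $L\cong\widetilde L$ near $Y$, so $L$ is unitary flat on a neighborhood of $Y$. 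I expect this last step to be the main obstacle: ensuring that the neighborhood on which everything converges stays open --- equivalently, isolating and verifying the precise form of the formal principle one needs here --- is where the $1$-convexity hypothesis is indispensable, and it is the technical heart of the argument.
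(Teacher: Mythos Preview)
Your strategy is sound and would ultimately work, but it takes a substantially longer route than the paper's. The paper does not iterate at all: after producing a system $\{(V_j,e_j,w_j)\}$ of type $2$ from $u_1(Y,X,L)=0$, it sets
\[
a_{jk}:=\log\!\left(\frac{t_{jk}^{-1}e_k}{e_j}\right)\in\mathcal{O}_{V_{jk}},
\]
observes that $a_{jk}=O(w_j^{\,2})$ and that $\{a_{jk}\}$ is an additive $1$-cocycle with values in $\mathcal{O}_V(-2Y)$, and then kills it \emph{in one step on the open set $V$} using Ohsawa's vanishing theorem on $1$-convex manifolds \cite[Theorem~4.5]{O}: $H^1(V,[Y]^{-2})=0$ because $([Y]^{-2}\otimes K_V^{-1})|_Y\cong K_Y^{-1}\otimes N_{Y/X}^{-1}$ is semi-positive by hypothesis. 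Writing $a_{jk}=b_k-b_j$ with $b_j\in\Gamma(V_j,\mathcal{O}_V(-2Y))$ and putting $\widehat e_j:=e_j\exp(-b_j)$ gives $t_{jk}^{-1}\widehat e_k=\widehat e_j$ on the nose, so $L|_V$ is unitary flat immediately.

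The essential difference is \emph{where} the cohomology vanishing is invoked. You apply Kodaira on the compact $Y$ to obtain $H^1(Y,N_{Y/X}^{-n})=0$ for $n\geq 2$, which forces you into an infinite induction followed by the formal-to-actual passage you rightly identify as the crux; the paper applies Ohsawa on the non-compact $1$-convex $V$, which linearizes globally in a single stroke and makes the convergence issue disappear. This is exactly why the hypotheses are phrased as they are: the $1$-convexity of $V$ and the \emph{metric} semi-positivity (not merely nefness) of $K_Y^{-1}\otimes N_{Y/X}^{-1}$ are precisely the inputs to Ohsawa's theorem. Your route can be completed --- the formal principle does hold for exceptional subvarieties --- but the justification you sketch for injectivity of $\mathrm{Pic}(V)\to\varprojlim_n\mathrm{Pic}(nY)$ needs more than ``formal functions plus finite-dimensionality of $H^1(V,\mathcal{O}_V)$'' (one must control $\varprojlim^1$ or produce an actual nowhere-vanishing section approximating the formal one), and your alternative $L^2$ approach would essentially rederive Ohsawa's result. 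The logarithm-plus-Ohsawa trick is the intended shortcut.
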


\begin{proof}
Take a open cover $\{U_j\}$ of $Y$, small open subset $V_j$ of $X$ such that $U_j$, 
local defining function $w_j$ of $U_j$ in $V_j$, and local frame $e_j$ of $L$ on $V_j$ as in \S \ref{section:3}. 
Assume that $u_1(Y, X, L)=0$. 
Then, from the same argument as in the proof of Theorem \ref{thm:1}, 
it follows that one may assume that the system $\{(V_j, e_j, w_j)\}$ is of type $2$ by modifying them if necessary; namely there exists a constant $t_{jk}\in\mathrm{U}(1)$ such that 
$t_{jk}^{-1}e_k/e_j=1+O(w_j^2)$ on each $V_{jk}$. 
By shrinking $V_j$'s, we may also assume that $|t_{jk}^{-1}e_k/e_j-1|<1$ on each $V_{jk}$. 
Define a function $a_{jk}\colon V_{jk}\to \mathbb{C}$ by 
\[
a_{jk}:=\log\left(\frac{t_{jk}^{-1}e_k}{e_j}\right), 
\]
where we are using the branch such that $\log 1=0$. 
It can easily be observed that $a_{jk}=O(w_j^2)$ and that $a_{jk}+a_{k\ell}+a_{\ell j}=0$ holds on each $V_j\cap V_k\cap V_\ell$. 
Therefore, $\{(V_{jk}, a_{jk})\}$ defines an element of \v{C}ech cohomology group $\check{H}^1(\{V_j\}, \mathcal{O}_V(-2Y))$. 

By shrinking if necessary, we will assume that $V:=\bigcup_jV_j$ is a $1$-convex neighborhood of $Y$ whose maximal compact analytic set is $Y$ in what follows. 
Then, according to Ohsawa's vanishing theorem \cite[Theorem 4.5]{O}, it follows that 
$H^1(V, [Y]^{-2})=0$, since $([Y]^{-2}\otimes K_V^{-1})|_Y\cong N_{Y/X}^{-2}\otimes (N_{Y/X}^{-1}\otimes K_Y)^{-1}=N_{Y/X}^{-1}\otimes K_Y^{-1}$ is semi-positive. 
Therefore one have that the \v{C}ech cohomology class defined by $\{(V_{jk}, a_{jk})\}$ is trivial, which means that there exists a holomorphic function $b_j\colon V_j\to \mathbb{C}$ on each $V_j$ such that $b_j=O(w_j^2)$ and that $-b_j+b_k=a_{jk}$ holds on each $V_{jk}$. 

Let $\widehat{e}_j$ be a new frame defined by $\widehat{e}_j:=e_j\cdot \exp(-b_j)$. 
Then it follows from a simple calculation that $t_{jk}^{-1}\widehat{e}_k=\widehat{e}_j$, which prove the theorem. 
\end{proof}

\begin{proof}[Proof of Theorem {\ref{thm:main_linearization}}]
By Theorem \ref{thm:1}, it is sufficient to show that $L$ is semi-positive by assuming that $u_1(Y, X, L)=0$. 
As ${\rm deg}\,N_{Y/X} <0$, $Y$ admits a fundamental system of neighborhoods which consists of $1$-convex neighborhoods of $Y$ whose maximal compact analytic sets are $Y$. 
The degree of the line bundle $K_Y^{-1}\otimes N_{Y/X}^{-1}$ is non-negative, since ${\rm deg}\,N_{Y/X}\leq 2-2g$, from which it follows that $K_Y^{-1}\otimes N_{Y/X}^{-1}$ is semi-positive. 
Therefore it follows from Theorem \ref{thm:linearizability_genaral} that there exists a neighborhood $V$ of $Y$ such that the restriction $L|_V$ is unitary flat. 

Take a $C^\infty$ Hermitian metric $h_0$ on $L\otimes [Y]^{-m}$ with semi-positive curvature. 
Let $f_Y$ be the canonical section of $[Y]$. 
Denote by $h_Y$ the singular Hermitian metric on $[Y]$ defined by $|f_Y|_{h_Y}^2\equiv 1$. 
Then a $C^\infty$ Hermitian metric on $L=(L\otimes [Y]^{-m})\otimes [Y]^m$ can be constructed from the flat metric on $L|_V$ and the singular Hermitian metric $h_0\cdot h_Y^m$ on $L$ 
by using ``regularized minimum construction", see \cite{K2}, \cite[\S 5]{K2018}, or \cite[\S 2.1]{K2019} for the detail.

\end{proof}


\section{Problems}\label{section:5}

Towards solving Conjecture \ref{conj:main} or \cite[Conjecture 2.1]{K2019}, here we make some discussion. 

First, consider one of the most simplest cases: when $Y$ is a non-singular compact curve holomorphically embedded into a non-singular surface $X$ such that the normal bundle is topologically trivial. 
Assume that the line bundle $[Y]$ is semi-positive. 
Take a $C^\infty$ Hermitian metric $h$ with semi-positive curvature. 
As it follows from \cite[Theorem 1.1]{K3} that the pair $(Y, X)$ is of infinite type, 
we are interested in determining whether or not there exists such an example of $(Y, X)$ of type $(\gamma)$ in Ueda's classification \cite[\S 5]{U}. 
From the viewpoint of Theorem \ref{thm:main_2} and the study on the relation between the holonomy of the foliation and Ueda type we investigated in \cite{KO}, we are interested in the following question at least when $h$ is real analytic: 

\begin{question}\label{q:surface_main}
Let $Y$ be a smooth elliptic curve holomorphically embedded into a non-singular K\"ahler surface $S$. 
Assume that the canonical bundle $K_S$ or the anti-canonical bundle $K_S^{-1}$ is semi-positive. 
Assume also that there exists an integer $m$ and a divisor $D$ of $S$ such that $K_S=[mY+D]$ and that the support of $D$ does not intersect $Y$. 
Let $X$ be a sufficiently small weakly $1$-complete neighborhood of $Y$. 
\\
$(i)$ Is there an example of (Y, X) of type $(\gamma)$ such that $X\setminus Y$ is holomorphically convex? \\
$(ii)$ How is the holonomy of the foliation $\mathcal{F}$ as in Theorem \ref{thm:main_2} $(ii)$ have (if exists)? 
\end{question}
Note that many things are known for Question \ref{q:surface_main} when $S$ is projective and $K_S$ is semi-positive, since abundance conjecture is affirmative for projective surfaces. 

Next, let us consider the case of general dimensions. 
According to \cite[Theorem 1.1]{K3} and Theorem \ref{thm:flat_u2}, 
it seems to be natural to pose the following: 

\begin{conjecture}
Let $X$ be a complex manifold and $Y$ be a non-singular compact hypersurface of $X$ which is K\"ahler. 
Assume that the normal bundle $N_{Y/X}$ is topologically trivial, 
and that $[Y]$ is semi-positive. 
Then the pair $(Y, X)$ is of infinite type. 
\end{conjecture}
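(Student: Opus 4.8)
The plan is to carry the inductive scheme behind Theorems~\ref{thm:1} and~\ref{thm:flat_u2} all the way, in the spirit of the surface case \cite[Theorem~1.1]{K3} together with the higher-dimensional Ueda theory of \cite{K2018}. Working contrapositively, fix a $C^{\infty}$ metric $h$ on $[Y]$ with semi-positive curvature; we want every Ueda class $u_{n}(Y,X)\in H^{1}(Y,N_{Y/X}^{-n})$ to vanish. Since $N_{Y/X}$ is unitary flat, the vanishing of $u_{1},\dots,u_{n-1}$ is equivalent to the existence of a system $\{(V_{j},e_{j},w_{j})\}$ of type $n$ with the canonical section $f_{Y}=w_{j}\cdot e_{j}$ fixed, so the induction step reads: given such a system of type $n$, show $u_{n}(Y,X)=0$ (equivalently, produce a system of type $n+1$).

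First I would run the weight-function linearization of \S\ref{section:3_5}. Starting from $1,2\in Z(Y,X,h;f_{Y})$ and iterating Proposition~\ref{prop:higher_main}, either the bootstrapping reaches $n$, so that $n\in Z(Y,X,h;f_{Y})$ and Proposition~\ref{prop:higher_main}$(i)$ gives $u_{n}(Y,X)=0$ outright, or it stalls at the first even order $n_{0}=2m_{0}\le n$ at which the diagonal coefficient $A_{j}^{(m_{0},m_{0})}$ of the normalized weight function is a strictly positive constant $A$. In the stalled case the system is, after the normalizations of \S\ref{section:3_4}--\S\ref{section:3_5}, already of type $n_{0}+1$ and $\varphi_{j}=\Phi_{j}+O(|w_{j}|^{2m_{0}+1})$ on each chart, where $\Phi_{j}=\sum_{p+q=2m_{0}}A_{j}^{(p,q)}w_{j}^{p}\overline{w_{j}}^{q}$ has constant coefficients, its diagonal term equals $A>0$, and its off-diagonal terms are flat sections of $N_{Y/X}^{q-p}$ (hence vanish unless that bundle is analytically trivial).

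The crux is the stalled case: one must show that all higher Ueda classes $u_{N}(Y,X)$, $N\ge n_{0}$, still vanish. Here I would argue by a secondary induction on the type: if $w_{k}=t_{jk}^{-1}w_{j}+O(w_{j}^{N+1})$ with $t_{jk}\in\mathrm{U}(1)$, then $|w_{k}|^{2m_{0}}$ agrees with $|w_{j}|^{2m_{0}}$ modulo terms of order $>N$, so the frozen leading term $\Phi_{j}$ of $\varphi_{j}$ transforms trivially to the relevant order, and comparing $\varphi_{k}-\varphi_{j}$ with the expansion~\eqref{eq:diff_phi} forces the order-$N$ Ueda cocycle to vanish --- \emph{provided} the pure coefficient $\varphi_{j}^{(N,0)}$ of the weight function is holomorphic at that stage. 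Proving this holomorphy is where I expect the genuine difficulty: Lemma~\ref{lem:hol_higher} obtains it from the interference-free form $\varphi_{j}=O(|w_{j}|^{N})$, which is precisely what the stall denies us, and the complex-Hessian positivity combined with Lemma~\ref{lem_positive_function} only delivers that $\varphi_{j}^{(N,0)}$ is \emph{pluriharmonic}; a pluriharmonic twisted $0$-cochain need not be a coboundary, since its $\overline{\partial}$ can be a nonzero harmonic Dolbeault representative in $H^{1}(Y,N_{Y/X}^{-N})$. Bridging this gap is the main obstacle, and I would try two routes: (a) exploit $\Phi_{j}$ more forcefully by subtracting the global psh comparison weight it defines and analyzing the residual function $-\log|f_{Y}|_{h}^{2}$ on $V\setminus Y$ together with an Ohsawa-type vanishing on a $1$-convex neighborhood of $Y$, as in the proof of Theorem~\ref{thm:linearizability_genaral}; or (b) when the curvature $\Theta_{h}$ acquires a second positive eigenvalue at some point off $Y$, deduce holomorphic convexity of a punctured neighborhood via \cite[Proposition~2]{B} and read off infinite type from the ensuing fibration, as in the proof of Theorem~\ref{thm:main_2}. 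If no stall ever occurs the induction closes immediately from Proposition~\ref{prop:higher_main}$(i)$.
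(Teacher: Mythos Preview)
The statement you are trying to prove is listed in the paper as a \emph{Conjecture} (in \S\ref{section:5}), not as a theorem; the paper contains no proof of it. What the paper \emph{does} prove towards it is Theorem~\ref{thm:flat_u2} ($u_{1}(Y,X)=u_{2}(Y,X)=0$), and the surface case is quoted from \cite[Theorem~1.1]{K3}. The conjecture is therefore the natural higher-dimensional extension of \cite{K3}, stated precisely because the machinery of \S\ref{section:3_5} gets stuck beyond order~$2$ in general. So there is no ``paper's own proof'' to compare your proposal against; your write-up is an attack on an open problem, and you yourself flag the gap (``Bridging this gap is the main obstacle'').

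Your diagnosis of where the argument stalls is accurate: once some $A_{j}^{(m_{0},m_{0})}>0$ survives, the hypothesis $\varphi_{j}=O(|w_{j}|^{N})$ that powers Lemma~\ref{lem:hol_higher} is lost, and positivity of the Hessian together with Lemma~\ref{lem_positive_function} no longer forces $\varphi_{j}^{(N,0)}$ to be holomorphic. That is exactly the reason the paper leaves the statement as a conjecture. However, neither of your proposed work-arounds is viable as stated. Route~(a) invokes the Ohsawa vanishing used in Theorem~\ref{thm:linearizability_genaral}, but that argument requires both that $K_{Y}^{-1}\otimes N_{Y/X}^{-1}$ be semi-positive and that $Y$ admit a fundamental system of $1$-convex neighborhoods with $Y$ as maximal compact set; when $N_{Y/X}$ is topologically trivial neither hypothesis is available (the first becomes semi-positivity of $K_{Y}^{-1}$, which is not assumed, and the second fails outright since $(Y^{2})=0$). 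Route~(b) via \cite[Proposition~2]{B} needs a K\"ahler ambient manifold with trivial canonical bundle and dimension~$2$ to produce holomorphic convexity of the complement, and even then, as in the proof of Theorem~\ref{thm:main_2}, it yields a foliation or an analytic fibration on a neighborhood rather than a direct conclusion about Ueda type; in higher dimensions no analogue of Brunella's argument is available. In short, your outline correctly reproduces the part the paper already handles and correctly locates the obstruction, but the two escape routes you propose do not close the gap.
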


We are also interested in the case where $Y$ is a hypersurface and $N_{Y/X}^{-1}$ is not pseudo-effective, since in this case, by Proposition \ref{prop:well-def_main} $(iii)$, the well-definedness of the $n$-th obstruction classes are assured for any $n$ whenever there exists a system of type $n$. 
By Theorem \ref{thm:2}, it seems to natural to ask the following: 

\begin{problem}
Let $X$ be a complex manifold and $Y$ be a non-singular compact hypersurface of $X$ which is K\"ahler. 
Let $L$ be a semi-positive line bundle on $X$ such that $L|_Y$ is topologically trivial. 
Assume that the conormal bundle $N_{Y/X}^{-1}$ is not pseudo-effective. 
Then, does it hold that $u_n(Y, X, L)=0$ for any positive integer $n$? 
\end{problem}



\end{document}